\newcommand{\CAT}{\operatorname{CAT}}
\newcommand{\GL}{\operatorname{GL}}
\newcommand{\p}{\operatorname{\mathbb{P}}}
\renewcommand{\P}{\operatorname{\mathbb{P}}}
\newcommand{\Z}{\operatorname{\mathbb{Z}}}
\newcommand{\h}{\operatorname{\mathbb{H}^{\infty}}}
\newcommand{\N}{\operatorname{\mathbb{Z}_{\geq 0}}}
\newcommand{\Ind}{\operatorname{Ind}}
\newcommand{\A}{\operatorname{\mathbb{A}}}
\newcommand{\F}{\operatorname{\mathbb{F}}}
\newcommand{\kk}{k}
\newcommand{\id}{\operatorname{id}}
\newcommand{\cent}{\operatorname{Cent}}
\newcommand{\aut}{\operatorname{Aut}}
\newcommand{\Bir}{\operatorname{Bir}}
\newcommand{\Cr}{\operatorname{Cr}}
\newcommand{\SL}{\operatorname{SL}}
\newcommand{\B}{\mathcal{B}}
\DeclareMathOperator{\Bs}{\mathcal{B}}
\def\dashmapsto{\mapstochar\dashrightarrow}
\newcommand{\exc}{\operatorname{Exc}}
\newcommand{\Exc}{\operatorname{Exc}}
\def\dashmapsto{\mapstochar\dashrightarrow}
\newcommand{\dist}{\operatorname{d}}
\renewcommand{\d}{\operatorname{d}}
\newcommand{\CC}{\mathcal{C}}
\newcommand{\Cb}{\mathscr{C}}
\newcommand{\Psaut}{\operatorname{Psaut}}
\renewcommand{\l}{\ell}
\DeclareMathOperator{\Lk}{Lk}
\DeclareMathOperator{\Min}{Min}
\DeclareMathOperator{\Base}{Base}
\setlist[enumerate]{label=\rm{(\arabic*)}}
\setlist[enumerate,2]{label=\rm({\it\roman*})}
\setlist[itemize]{label=\raisebox{0.25ex}{\tiny$\bullet$}}
\theoremstyle{plain}
\newtheorem{theorem}{Theorem}[section]
\newtheorem{lemma}[theorem]{Lemma}
\newtheorem{proposition}[theorem]{Proposition}
\newtheorem{corollary}[theorem]{Corollary}
\theoremstyle{definition}
\newtheorem{definition}[theorem]{Definition}
\newtheorem{example}[theorem]{Example}
\newtheorem{remark}[theorem]{Remark}
\newtheorem{question}{Question}[section]
\begin{document}

\author{Anne Lonjou and Christian Urech}
\title{Actions of Cremona groups on $\CAT(0)$ cube complexes}
\date{\today}
 \address{Anne Lonjou, Université Paris-Saclay, Département de mathématiques d'Orsay, F-91405 Orsay Cedex, France}
\email{anne.lonjou@u-psud.fr}
\address{Christian Urech, EPFL, SB MATH, Station 8, CH-1015 Lausanne, Switzerland}
\email{christian.urech@epfl.ch}

\subjclass[2010]{14E07; 20F65; 20F67} 
\keywords{Cremona groups, groups of pseudo-automorphisms, CAT(0) cube complexes}

\thanks{The first author acknowledges support from the Swiss National Science Foundation Grant ``Birational transformations of threefolds'' $200020\mathunderscore178807$, the second author was supported by the Swiss National Science Foundation project $\text{P2BSP}\mathunderscore2175008$.}
\begin{abstract}
	For each $d$ we construct $\CAT(0)$ cube complexes on which Cremona groups of rank $d$ act by isometries. From these actions we deduce new and old group theoretical and dynamical results about Cremona groups. In particular, we study the dynamical behaviour of the irreducible components of exceptional loci. This leads to proofs of regularization theorems, such as the regularization of groups with property FW. We also find new constraints on the degree growth for non-pseudo-regularizable birational transformations, and we show that the centralizer of certain birational transformations is small.
\end{abstract}

\maketitle

\section{Introduction}

To a variety $X$ over a field $\kk$, i.e., an integral and separated scheme of finite type over $\kk$, one can associate its group of birational transformations $\Bir(X)$. If $X$ is rational and of dimension~$d$, $\Bir(X)$ has a particularly rich structure and it is called the \emph{Cremona group of rank $d$}. 
In the last decades Cremona groups have attracted considerable attention. Numerous geometrical, group-theoretical, and dynamical results have been proven for Cremona groups of rank 2 with the help of an action of the group on a hyperbolic space, so that the group is now well understood (see \cite{Cantat_review} for an overview). Unfortunately, there are no interesting actions on hyperbolic spaces available for Cremona groups of rank greater than two. 

Recently, in geometric group theory, actions of groups on $\CAT(0)$ cube complexes have turned out to be important tools to study a large class of groups. These non-positively curved cube complexes naturally appear in numerous instances, yet their combinatorial structure is rigid enough to provide interesting constraints on the structure of the acting group.

The aim of this article is to apply these tools from geometric group theory to Cremona groups. Hence, the main part of this paper consists of constructing $\CAT(0)$ cube complexes on which groups of birational transformations act. Once our cube complexes are constructed, many new and old results about groups of birational transformations will be immediate. Some of our results, especially in the surface case, have already been known before for algebraically closed fields. However, the $\CAT(0)$ cube complexes will give unified and simpler proofs with the additional advantage that they naturally work over arbitrary fields. We also hope that the cube complexes constructed in this article will serve as useful tools for future applications.

In a first part, we construct from a projective regular surface $S$ over a field $\kk$ a cube complex $\Cb(S)$, called \emph{blow-up complex}, on which $\Bir(S)$ acts faithfully by isometries. The vertices of our complex consist of classes of marked surfaces, i.e., projective regular surfaces $T$ together with a birational map $T\dashrightarrow S$, and the edges are given by blow-ups of closed points. This gives a new perspective on $\Bir(S)$. For instance, elements of $\Bir(S)$ that are conjugate to an automorphism of a projective regular surface correspond to elliptic isometries on $\Cb(S)$. More generally, the translation length of an isometry of $\Cb(S)$ induced by $f\in\Bir(S)$ corresponds to the {dynamical number of base points} of $f$ introduced in \cite{Blanc_Deserti}. We will show the following main result:

\begin{theorem}\label{thm:Cbcat0}
	Let $S$ be a projective regular surface over a field $\kk$. The cube complex $\Cb(S)$ is $\CAT(0)$.
\end{theorem}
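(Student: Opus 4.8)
The plan is to verify Gromov's link criterion: a cube complex is $\CAT(0)$ if and only if it is connected and simply connected and the link of each of its vertices is a flag simplicial complex. Connectedness is the easiest step. Given two vertices, represented by marked surfaces $(T_1,\phi_1)$ and $(T_2,\phi_2)$, the birational map $\phi_2^{-1}\circ\phi_1\colon T_1\dashrightarrow T_2$ can be resolved by a surface $W$ admitting birational morphisms $W\to T_1$ and $W\to T_2$ compatible with the markings; since every birational morphism of regular projective surfaces factors as a finite sequence of blow-ups of closed points (elimination of indeterminacy together with the structure theorem for birational morphisms of surfaces), both morphisms give edge-paths in $\Cb(S)$ joining the common refinement $(W,\phi_1\circ(W\to T_1))$ to $(T_1,\phi_1)$ and to $(T_2,\phi_2)$. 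Hence $\Cb(S)$ is connected.

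Next I would analyse the link $\Lk(v)$ of a vertex $v=(T,\phi)$, which is the heart of the argument. Its vertices are the edge-germs at $v$, namely the blow-ups $b_p$ of closed points $p\in T$ and the contractions $c_E$ of curves $E\subset T$ that are contractible to a regular point (by Castelnuovo's criterion, valid for regular projective surfaces over an arbitrary field). Two germs span an edge of $\Lk(v)$ precisely when the corresponding elementary transformations commute and thus bound a square: two blow-ups $b_p,b_q$ always do, since blow-ups at distinct closed points are independent; two contractions $c_E,c_F$ do if and only if $E$ and $F$ are disjoint; and a blow-up $b_p$ and a contraction $c_E$ do if and only if $p\notin E$. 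To establish the flag condition I take germs $b_{p_1},\dots,b_{p_r},c_{E_1},\dots,c_{E_s}$ that are pairwise joined by edges. Pairwise compatibility says exactly that the $p_i$ are distinct, the $E_j$ are pairwise disjoint, and no $p_i$ lies on any $E_j$. Disjoint contractible curves can be contracted simultaneously, distinct points can be blown up simultaneously, and the points survive the contractions because they avoid the $E_j$; letting each of the $r+s$ elementary moves be performed or not then produces a subcube of $\Cb(S)$ of dimension $r+s$ having $v$ as a vertex. Thus the germs span a simplex, and $\Lk(v)$ is flag.

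Simple connectivity is the step I expect to be the main obstacle. The idea is to fill an arbitrary edge-loop $\gamma=(v_0,\dots,v_n=v_0)$ by squares after pushing it into a single, well-controlled subcomplex. Resolving all the birational maps between the finitely many surfaces occurring in $\gamma$, I obtain one surface $W$ dominating all of them; since each edge of $\gamma$ is a blow-up between two surfaces both dominated by $W$, the whole loop lies in the downward subcomplex $\Cb(S)_{\leq W}$ spanned by the marked surfaces dominated by $W$. It therefore suffices to show that $\Cb(S)_{\leq W}$ is simply connected, and in fact I would aim to prove it contractible. This complex has $W$ as its unique maximal vertex, and I would construct a combinatorial deformation retraction onto $W$ by a discrete-Morse/flow argument: to a surface $T<W$ with contraction $\pi\colon W\to T$ one assigns a canonical elementary blow-up moving $T$ one step closer to $W$ (un-contracting a suitable exceptional curve), and one checks that this flow is defined on cubes and strictly decreases the number of curves contracted by $\pi$, so that it terminates at $W$. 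Verifying that these canonical moves are compatible across the cubes of $\Cb(S)_{\leq W}$, i.e. that the retraction is genuinely cellular, is the delicate point, and is where the combinatorics of successive contractions must be controlled.

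Finally, I would note that the same conclusion can be reached more conceptually by identifying the hyperplanes of $\Cb(S)$ with the points of the bubble space of $S$: each such point determines a wall separating the marked surfaces on which its exceptional divisor is present from those on which it is contracted, and one checks that $\Cb(S)$ is the cube complex dual to this wall space in the sense of Sageev, which is automatically $\CAT(0)$. This reformulation also makes transparent the announced dictionary between translation lengths and dynamical numbers of base points, since an isometry induced by $f\in\Bir(S)$ crosses exactly the walls corresponding to the persistent base points of the iterates of $f$.
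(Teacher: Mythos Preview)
Your connectedness and flag arguments are essentially identical to the paper's (Lemmas~\ref{simplyconnected} and~\ref{flag}). The difference lies in simple connectivity. The paper does not attempt to retract the downward cone $\Cb(S)_{\leq W}$ onto $W$; instead it pushes the loop \emph{upward} one square at a time. Having fixed a $(W,\psi)$ dominating every vertex on $\gamma$, one locates a vertex $v_{i_0}$ on $\gamma$ of \emph{minimal} Picard rank $\rho_{\min}(\gamma)$. Its two neighbours on $\gamma$ must then have Picard rank $\rho(v_{i_0})+1$, so they are obtained from $v_{i_0}$ by blowing up two closed points $p,q$. If the neighbours coincide one deletes the backtrack; otherwise the square spanned by blowing up both $p$ and $q$ replaces $v_{i_0}$ by a vertex $v_{i_0}'$ of strictly larger Picard rank, and $W$ still dominates $v_{i_0}'$. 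Iterating, either $\rho_{\min}(\gamma)$ increases or the number of vertices realising it drops, and the process terminates since $\rho_{\min}(\gamma)\leq\rho(W)$. This sidesteps exactly the point you flag as ``delicate'': no canonical elementary move is ever chosen and no cellular compatibility has to be checked. Your retraction could probably be made to work (e.g.\ by totally ordering the exceptional components of $W\to T$ and always un-contracting the largest one), but you have not carried this out, and it is more laborious than the paper's local argument.

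Your Sageev alternative does not go through as stated. The cube complex dual to the wall space you describe is strictly larger than $\Cb(S)$: a vertex of the Sageev dual is a consistent orientation of \emph{all} walls, and orientations that declare infinitely many pairwise independent bubble points to be ``blown up'' are consistent but do not come from any projective regular surface. At best you obtain an embedding $\Cb(S)\hookrightarrow(\text{Sageev dual})$, and one would then have to prove this image is a median (equivalently, combinatorially convex) subcomplex---which is work of the same order as proving the $\CAT(0)$ property directly.
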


Unfortunately, there is no direct generalization of this construction for $\Bir(X)$, if the dimension of $X$ is larger than 2 (see Section \ref{subsection_generalization}). Nevertheless, in the second part of this paper, we construct a family of cube complexes $\CC^\l(X)$, for $0\leq\l< ~\dim(X)$, on which $\Bir(X)$ or its subgroups of pseudo-automorphisms act, when $X$ is a variety of any dimension over a field $\kk$. Now, the vertices are classes of marked varieties that are not necessarily complete, as it was the case for the blow-up complex. These constructions are inspired by the paper \cite{cantat-cornulier}, where, for the case $\l=0$ and $k$ algebraically closed, the authors provide a similar construction in the language of commensurating actions. The idea of the construction of our complexes is to add and remove closed subvarieties from $X$. We show:

\begin{theorem}\label{thm:CCcat0}
	Let $X$ be a variety of dimension $d$ over a field $\kk$. For each $0\leq \l\leq d-1$, the cube complex $\CC^\l(X)$ is $\CAT(0)$. 
\end{theorem}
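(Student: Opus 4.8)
The plan is to deduce the $\CAT(0)$ property from a purely combinatorial feature of the $1$-skeleton of $\CC^\l(X)$, namely that it is a \emph{median graph}. By the classical correspondence between median graphs and $\CAT(0)$ cube complexes (Chepoi, Roller), a cube complex is $\CAT(0)$ precisely when it is connected and its $1$-skeleton is median, i.e. when for any three vertices $a,b,c$ there is a unique vertex $m(a,b,c)$ lying simultaneously on a geodesic between each of the three pairs. Equivalently, one could verify Gromov's two conditions, simple connectedness together with the flag condition on all vertex links. I would organize the argument around the median characterization, since the vertices of $\CC^\l(X)$ are already described by the combinatorial data of which closed subvarieties have been added to or removed from $X$, and this data is tailor-made for a symmetric-difference calculus.

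First I would fix a basepoint, for instance the class of $X$ with the identity marking, and encode every other vertex by the finite collection of admissible closed subvarieties (those of dimension at most $\l$ occurring in the relevant markings) by which it differs from the basepoint. This identifies the vertex set with a subset of the finitely supported elements of $\{0,1\}^{\mathcal V}$, where $\mathcal V$ denotes the ambient set of admissible subvarieties, in such a way that an edge of $\CC^\l(X)$ corresponds to flipping a single coordinate. I would then check that the graph metric on the $1$-skeleton coincides with the symmetric-difference (Hamming) distance, and that the complex is connected: any marked variety is reached from the basepoint by a finite sequence of single additions and removals, which should be immediate from the construction.

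The heart of the argument, and the step I expect to be the main obstacle, is to show that the combinatorial median is always realized by an honest vertex. Given three vertices presented as finite sets $A,B,C\subseteq\mathcal V$, their cube-median is the majority set $(A\cap B)\cup(B\cap C)\cup(C\cap A)$, and one must verify that this configuration again corresponds to a legitimate marked variety in $\CC^\l(X)$, compatibly with the marking identifications. This is exactly where the geometry enters: the class of admissible subvarieties is stable under the lattice operations, since finite unions and intersections of closed subvarieties of dimension at most $\l$ again have dimension at most $\l$, and the open subvarieties obtained by removing them are related by the open immersions that define the edges. I would prove realizability by exhibiting the median variety as the one obtained from $X$ by removing exactly the subvarieties in the majority set, and by checking that the resulting markings agree on overlaps; together with connectedness this shows that the realizable configurations form a median-closed subset of the cube, so that the $1$-skeleton is a median graph, uniqueness of $m(a,b,c)$ being inherited from the uniqueness of the majority element in $\{0,1\}^{\mathcal V}$.

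Once realizability is established, filling in the cubes yields a $\CAT(0)$ cube complex by the median-graph correspondence, which proves the theorem. I would stress that the argument uses only the closure of the admissible subvarieties under finite unions and intersections and the dimension bound $\l\le d-1$, so that it applies uniformly for every $0\le\l\le d-1$ and over an arbitrary field $\kk$. As a consistency check, the case $\l=0$ should recover the $\CAT(0)$ cube complex underlying the commensurating action of \cite{cantat-cornulier}.
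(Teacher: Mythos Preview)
Your median-graph strategy is a perfectly legitimate route in principle---median graphs and $\CAT(0)$ cube complexes are indeed the same objects---but the concrete implementation you sketch has a genuine gap that happens to be exactly the point where the geometric content lives.

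You describe vertices of $\CC^\l(X)$ as configurations obtained from the basepoint $[(X,\id)]$ by \emph{removing} closed subvarieties, and you propose to realize the median of three vertices as ``the one obtained from $X$ by removing exactly the subvarieties in the majority set.'' But vertices of $\CC^\l(X)$ are not all dominated by $[(X,\id)]$: a vertex $[(A,\varphi)]$ may correspond to a variety $A$ that contains, in codimension $\l+1$, \emph{more} than $X$ does (an edge can be oriented \emph{towards} the basepoint as well as away from it). Consequently the median of three vertices need not be realizable by deleting anything from $X$; in general one must \emph{glue} pieces of several marked varieties together along the open loci where the markings agree. The resulting $\kk$-scheme is automatically integral and of finite type, but there is no a priori reason it should be \emph{separated}, and if it is not, it is not a vertex of $\CC^\l(X)$. (The Atiyah-flop discussion in Remark~\ref{rmq_plusieurs_cubes} illustrates how delicate this is already for $\l=1$: naive gluings can fail separatedness.) Your appeal to ``closure under finite unions and intersections'' does not see this obstruction, and the phrase ``checking that the resulting markings agree on overlaps'' is precisely where the argument needs a real proof.

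The paper handles exactly this issue in its verification of the flag condition: given edges at $v$ that point \emph{towards} $v$, it glues the corresponding marked varieties and then invokes the pairwise-square hypothesis together with Lemma~\ref{separated} to conclude that the glued scheme is separated. If you want to salvage the median approach, you will need the same ingredient: show that whenever the majority vote asks you to glue, the pairwise existence of squares forces any two points of the glued scheme to lie in a common separated open, so that Lemma~\ref{separated} applies. Once you supply that, your argument and the paper's become essentially equivalent reformulations of one another. (A minor side remark: the subvarieties being added or removed have \emph{codimension} $\l+1$, not dimension at most $\l$.)
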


The cube complexes $\CC^\l(X)$ shed light on the dynamics of irreducible components of the exceptional locus of birational transformations, and we obtain, as before, a geometrical interpretation inside our cube complexes of some dynamical properties of groups of birational transformations. For surfaces, this gives a new classification of types of birational transformations that refines, in particular, the previous descriptions of transformations of exponential degree growth under iteration (see Section~\ref{comparison}).

In what follows, let us give an insight into the variety of results that we deduce from the actions of $\Bir(X)$ on our $\CAT(0)$ cube complexes.

\subsection{Regularization results}
An important feature of $\CAT(0)$ cube complexes are the fixed point properties of groups acting on them by isometries. In our case, we will use those properties to deduce regularization theorems. A subgroup $G\subset\Bir(X)$ is called \emph{(pseudo-)regularizable}, if there exists a variety $Y$ and a birational map $f\colon X\dashrightarrow Y$ such that $fGf^{-1}$ is a subgroup of the group of \hbox{(pseudo-)automorphisms} of $Y$. If the variety $Y$ can moreover be chosen to be projective, we call $G$ \emph{projectively regularizable}.

In dimension 2, our constructions will imply immediately various known regularization results and generalize them to arbitrary fields. Moreover, when working over a perfect field $\kk$, we will see that if a subgroup of $\Bir(S)$ is regularizable over the algebraic closure, then it is already regularizable over $k$:

\begin{theorem}\label{prop_reg_field_extension}
	Let $S$ be a geometrically irreducible surface over a perfect field $\kk$ and let $G\subset\Bir(S)$ be a subgroup. Consider the algebraic closure $\bar{\kk}$ of $\kk$ and let $S_{\bar{\kk}}=S\times_\kk\bar{\kk}$. Then the following are equivalent:
	\begin{enumerate}
		\item\label{item_reg_L} There exists a projective regular surface $T_{\bar{\kk}}$ over $\bar{\kk}$ and a $\bar{\kk}$-birational transformation $\varphi\colon T_{\bar{\kk}}\dashrightarrow S_{\bar{\kk}}$ such that $\varphi^{-1}G\varphi\subset\aut(T_{\bar{\kk}})$.
		\item\label{item_reg_k} There exists a projective regular surface $T'$ over $\kk$ and a $\kk$-birational transformation $\varphi\colon T'\dashrightarrow S$ such that $\varphi^{-1}G\varphi\subset\aut(T')$.
	\end{enumerate} 
\end{theorem}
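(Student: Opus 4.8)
The implication \eqref{item_reg_k}$\Rightarrow$\eqref{item_reg_L} is immediate: given $T'$ and $\varphi$ over $\kk$, base change to $\bar\kk$ produces $T_{\bar\kk}:=T'\times_\kk\bar\kk$ and $\varphi_{\bar\kk}$ satisfying \eqref{item_reg_L}. The content is the converse, which I would prove by a Galois-descent argument carried out inside the blow-up complex, using that $\Cb(S_{\bar\kk})$ is $\CAT(0)$ (Theorem~\ref{thm:Cbcat0}).

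First I would set up two commuting actions on $\Cb(S_{\bar\kk})$. On the one hand $\Bir(S_{\bar\kk})$ acts by isometries through post-composition with the markings, and $G\subset\Bir(S)$ acts via the base-change embedding $\Bir(S)\hookrightarrow\Bir(S_{\bar\kk})$; here each $g\in G$ fixes the underlying surfaces and only alters their markings, so $G$ acts without inversion. On the other hand, since $S_{\bar\kk}=S\times_\kk\bar\kk$ is defined over $\kk$, the Galois group $\Gamma:=\operatorname{Gal}(\bar\kk/\kk)$ (a genuine Galois group, as $\kk$ is perfect and hence $\bar\kk/\kk$ is Galois) acts on marked surfaces by $\sigma\cdot(T,\pi)=(T^\sigma,\pi^\sigma)$; this respects blow-up relations and gives an action by cube-complex automorphisms. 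As every element of $G$ is defined over $\kk$, the two actions commute, so $\Gamma$ preserves $\operatorname{Fix}(G)$. By construction of the complex, $G$ fixes the vertex $[T,\pi]$ exactly when $\pi^{-1}G\pi\subset\aut(T)$; thus hypothesis \eqref{item_reg_L} says precisely that $G$ fixes $[T_{\bar\kk},\varphi]$, so $\operatorname{Fix}(G)$ is a nonempty convex subcomplex.

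Next I would produce a point fixed by both actions. The surface $T_{\bar\kk}$ and the map $\varphi$ are of finite type, hence defined over a finite Galois extension $L/\kk$, so the $\Gamma$-orbit of $[T_{\bar\kk},\varphi]$ is finite and lies in $\operatorname{Fix}(G)$. Applying the $\CAT(0)$ fixed-point theorem for bounded orbits, the circumcenter $p$ of this orbit is fixed by $\Gamma$, and $p\in\operatorname{Fix}(G)$ since $\operatorname{Fix}(G)$ is convex and contains the orbit. Now let $C$ be the carrier of $p$, the smallest cube containing $p$ in its interior; every automorphism fixing $p$ stabilizes $C$, so $C$ is stabilized by $G$ and by $\Gamma$, and $C\subset\operatorname{Fix}(G)$ because $\operatorname{Fix}(G)$ is a subcomplex. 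Since each blow-up raises the geometric Picard rank by one, the function $\rk\Pic$ orients every edge of $C$; both $G$ (fixing the underlying surfaces) and $\Gamma$ (preserving $\rk\Pic$) act on $C$ preserving this orientation, hence each fixes the top vertex $w$ of $C$, obtained by performing all the blow-ups recorded by $C$. Thus $w\in\operatorname{Fix}(G)$ and $w$ is $\Gamma$-fixed.

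Finally, writing $w=[T',\psi]$, being $\Gamma$-fixed equips the marked surface $(T',\psi)$ with a canonical descent datum, so by Galois descent over the perfect field $\kk$ it descends to a projective regular surface $T'_0$ over $\kk$ with a $\kk$-birational map $\psi_0\colon T'_0\dashrightarrow S$; being $G$-fixed then reads $\psi_0^{-1}G\psi_0\subset\aut(T'_0)$, which is \eqref{item_reg_k}. I expect the main obstacle to be exactly this passage from a metric fixed point to a genuine $\kk$-rational \emph{vertex}: one must rule out that $\Gamma$ merely permutes the vertices of the carrier cube without fixing one, and this is precisely where the Picard-rank orientation of cubes, together with the perfectness of $\kk$ ensuring $\bar\kk/\kk$ is Galois so that descent applies, play their role.
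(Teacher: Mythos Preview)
Your argument is essentially correct, but it takes a genuinely different route from the paper's. The paper avoids Galois descent entirely by reducing to the numerical criterion of Proposition~\ref{thm_reg_boundedorbit}: a subgroup of $\Bir(S)$ is projectively regularizable if and only if $\Bs(f)$ is uniformly bounded for $f\in G$. Since $G$ is regularizable over $\bar\kk$, one has $\Bs_{\bar\kk}(f)\le K$ for some $K$; and because each closed $\kk$-point can only split into more points after base change, $\Bs_\kk(f)\le\Bs_{\bar\kk}(f)\le K$. Applying the same criterion over $\kk$ finishes the proof in two lines.

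Your descent argument is more hands-on: it produces the $\kk$-model directly as a $\Gamma$-fixed vertex rather than through an abstract boundedness statement, and it would generalize to settings where no convenient numerical invariant like $\Bs$ is available. Two small points worth tightening: first, rather than using circumcenters in the Euclidean metric (where completeness of an infinite-dimensional complex is delicate), you can apply Proposition~\ref{fixedpoint} directly to the $\Gamma$-action on the convex subcomplex $\operatorname{Fix}(G)$; the finite $\Gamma$-orbit is bounded and $\Gamma$ preserves the Picard-rank orientation, so you get a $\Gamma$-fixed \emph{vertex} immediately, without the carrier-cube step. Second, in the descent step you should note that the isomorphisms $\alpha_\sigma\colon T'\to T'^{\sigma}$ are \emph{uniquely} determined by compatibility with the markings, which is what forces the cocycle condition and makes the descent datum effective. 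With these tweaks the approach is sound, but the paper's proof is considerably shorter.
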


We focus now on certain classes of groups of birational transformations that are always regularizable. A group $G$ has the \emph{property FW} if every action of $G$ on a $\CAT(0)$ cube complex has a fixed point. 
The class of groups with property FW is rather rich and contains in particular all groups with Kazhdan's property (T), such as $\SL_n(\Z)$ for $n\geq 3$ (see for instance \cite{cornulier_wallings}). 

An element $g$ in a group $G$ is called \emph{divisible}, if for every integer $n\geq 0$ there exists an element $f\in G$ such that $f^n=g$. An element $g\in G$ is called \emph{distorted}, if $\lim_{n\to\infty}\frac{|g^n|_{S}}{n}=0$ for some finitely generated subgroup $\Gamma\subset G$ containing $g$, where $|g^n|_{S}$ denotes the word length of $g^n$ in $\Gamma$ with respect to some finite set $S$ of generators of $\Gamma$. We prove in the following theorem that these two classes of birational transformations are regularizable. Distorted elements in $\Bir(\p_\kk^2 )$ for an algebraically closed field $\kk$ of characteristic $0$ have been classified in \cite{blanc2019length} and \cite{cornulier2018distortion} and a description of divisible elements in $\Bir(\p_\kk^2)$ can be found in \cite{liendo2018characterization}.

\begin{theorem}\label{thm_regularization}
	Let $X$ be a variety over a field $\kk$.
	\begin{itemize}
		\item If a subgroup $G\subset\Bir(X)$ has property FW, then $G$ is regularizable and if, moreover, $X$ is a surface, then $G$ is projectively regularizable;
		\item if an element $g\in\Bir(X)$ is divisible or distorted, then $\langle g\rangle$ is regularizable and if, moreover, $X$ is a surface, then $\langle g\rangle$ is projectively regularizable.
	\end{itemize} 
\end{theorem}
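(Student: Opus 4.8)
The plan is to derive Theorem~\ref{thm_regularization} from the existence of the $\CAT(0)$ cube complexes constructed in Theorems~\ref{thm:Cbcat0} and~\ref{thm:CCcat0}, together with the standard fixed-point theory for isometric group actions on such complexes. The unifying principle is that each of the three hypotheses---property FW, divisibility, distortion---forces the subgroup $G$ (or the cyclic group $\langle g\rangle$) to act on the relevant cube complex with a \emph{fixed point}, and that a fixed point translates geometrically into a regularization. For the general-dimension statements I would work with $\CC^\l(X)$, and for the sharper \emph{projective} regularization in the surface case I would work with the blow-up complex $\Cb(S)$, whose vertices are classes of marked projective regular surfaces; there a fixed vertex is literally a projective regular model on which $G$ acts by automorphisms.

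First I would treat property FW. By definition, a group with property FW fixes a point under any isometric action on a $\CAT(0)$ cube complex. Applying this to the action of $G$ on $\Cb(S)$ in the surface case: a fixed vertex is (the class of) a marked projective regular surface $T\dashrightarrow S$ such that conjugating $G$ by this marking lands $G$ inside $\aut(T)$, which is exactly projective regularizability. For a variety $X$ of arbitrary dimension I would instead use the action on $\CC^\l(X)$ (for a suitable choice of $\l$, most naturally $\l=0$, following the commensurating-action viewpoint of \cite{cantat-cornulier}): a fixed point there provides a marked (not necessarily complete) variety $Y$ on which $G$ acts by pseudo-automorphisms, giving pseudo-regularizability, i.e.\ regularizability in the sense defined above. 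The one subtlety to check is that ``fixed point'' in the combinatorial/median sense yields a genuine fixed \emph{vertex}, which holds because the automorphism group of the relevant finite-dimensional cube complex stabilizing a bounded set fixes the center of a cube, and one passes to the surrounding vertex; I would invoke the Bruhat--Tits style fixed-point lemma for $\CAT(0)$ cube complexes.

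Next, for divisible and distorted elements, the mechanism is that such an element cannot act as a hyperbolic (loxodromic) isometry of a $\CAT(0)$ cube complex, so it must be elliptic and hence fix a point. For a hyperbolic isometry the translation length is a positive real number $\tau$, and $g^n$ has translation length $n\tau$; combined with the fact that translation length on a $\CAT(0)$ space is a stable length that grows linearly, this is incompatible with distortion, since distortion forces $\lim_n |g^n|/n = 0$ and translation length is bounded above by word length (up to the Lipschitz constant of the orbit map). Similarly, divisibility is incompatible with positive translation length because writing $g = f^n$ would give $\tau(g) = n\,\tau(f)$ for all $n$, forcing $\tau(g)=0$. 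In both cases $\langle g\rangle$ fixes a point, and one concludes regularizability exactly as in the FW case, and projective regularizability in the surface case via $\Cb(S)$. I would recall here that, as noted in the introduction, the translation length of the isometry induced by $f\in\Bir(S)$ equals the dynamical number of base points of $f$, so elliptic precisely means this dynamical invariant vanishes, which is the known characterization of (projective) regularizability.

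The main obstacle I anticipate is the careful dictionary between a fixed point of the cube-complex action and the precise regularization statement, together with ensuring the correct model category (complete vs.\ not complete, projective vs.\ quasi-projective) appears. For property FW on $\CC^\l(X)$ in arbitrary dimension, I must verify that the action is by \emph{isometries} on an honest $\CAT(0)$ cube complex (guaranteed by Theorem~\ref{thm:CCcat0}) and that a fixed point corresponds to an \emph{invariant} marked variety on which $G$ acts by pseudo-automorphisms rather than merely by birational maps preserving some lower-dimensional data---this is where the structure of the vertices (adding and removing closed subvarieties) must be unwound. The distortion argument requires comparing the intrinsic translation length with word length via a quasi-isometric orbit map, so I must make sure the orbit map of a finitely generated subgroup into the complex is Lipschitz, which follows because each generator moves a basepoint a bounded combinatorial distance. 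The divisibility argument is the cleanest and I expect no genuine difficulty there beyond the translation-length computation.
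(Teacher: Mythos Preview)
Your overall strategy is right, but there is a genuine gap in the arbitrary-dimension case: a fixed vertex in $\CC^0(X)$ does \emph{not} give regularizability. Recall that a vertex of $\CC^\l(X)$ is an equivalence class of marked pairs $(A,\varphi)$, where two pairs are identified when the transition map is an isomorphism in codimension $\l+1$. Thus if $G$ fixes $[(A,\varphi)]$ in $\CC^0(X)$, all you can conclude is that $\varphi^{-1}G\varphi\subset\Psaut^1(A)$, i.e.\ $G$ is \emph{pseudo}-regularizable in codimension~$1$. Your sentence ``pseudo-regularizability, i.e.\ regularizability in the sense defined above'' conflates two notions that the paper carefully distinguishes; the theorem asks for a genuine model $Y$ with $G$ conjugated into $\aut(Y)$, not merely $\Psaut(Y)$.

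The missing idea is an induction through the whole family of complexes $\CC^0,\CC^1,\dots,\CC^{d-1}$. After the first fixed point you have $G\subset\Psaut^1(X_1)$, so $G$ now acts on $\CC^1(X_1)$; invoking property~FW (respectively, the translation-length argument for divisible or distorted $g$) again yields a fixed vertex, and hence a conjugation of $G$ into $\Psaut^2(X_2)$ via an isomorphism in codimension~$1$. Repeating, after $d$ steps you reach $\Psaut^d(X_d)=\aut(X_d)$, which is the desired regularization. Your arguments for why divisibility and distortion force zero translation length are fine, as is your use of $\Cb(S)$ for the projective surface statement; you just need to run them once at each level $\l=0,\dots,d-1$ rather than a single time.
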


The first part of Theorem \ref{thm_regularization} generalizes in particular \cite[Théorème A]{Cantat_groupes_birat}, it implies also \cite[Theorem C]{cantat_xie_padic}, and it answers a question of Serge Cantat and Yves Cornulier (\cite{cantat-cornulier}).

\begin{remark}
	For the case of irreducible and separated schemes of finite type over algebraically closed fields, Yves Cornulier \cite{Cornulier_FW} has obtained simultaneously and with different techniques another proof that groups with property FW are regularizable, using the notion of commensurating actions. 
\end{remark}

\subsection{Centralizers of loxodromic elements} 
Centralizers of elements in the plane Cremona group have been studied in various papers (\cite{Cantat_groupes_birat}, \cite{blanc_cantat_dynamic_degree} \cite{Blanc_Deserti}, \cite{MR3049289}, and the very recent \cite{zhao2019centralizers}), inspired by similar results for diffeomorphism groups. In fact, it has been shown that the centralizers of general elements in the plane Cremona group are virtually cyclic (see \cite{zhao2019centralizers} and references therein). The rigid nature of certain isometries of $\CAT(0)$ cube complexes allows to give new restrictions on centralizers of certain birational transformations of varieties of arbitrary dimension:

\begin{theorem}\label{centralizer}
	Let $X$ be a variety over an algebraically closed field $\kk$ of characteristic $0$. Let $f\in\Bir(X)$ be an element that is not pseudo-regularizable and let $\cent(f)\subset\Bir(X)$ be its centralizer. Then either $f$ permutes the fibers of a rational map $X\dashrightarrow Y$, where $0<\dim(Y)<\dim(X)$, or $\cent(f)$ contains as a finite index subgroup $\langle f\rangle\times H$, where $H\subset\Bir(X)$ is a torsion subgroup. 
\end{theorem}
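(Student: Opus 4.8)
The plan is to transfer the hypothesis into a statement about the action on one of the complexes of Theorem~\ref{thm:CCcat0} and then to exploit the rigidity of hyperbolic isometries of $\CAT(0)$ cube complexes. First I would fix the complex $C:=\CC^{d-1}(X)$, on which all of $\Bir(X)$—and in particular $\cent(f)$—acts by isometries. A pseudo-automorphism is an isomorphism in codimension one, so it stabilises the vertex of $C$ given by the corresponding marked model; hence $f$ is pseudo-regularizable precisely when $f$ acts as an elliptic isometry of $C$. Since combinatorial isometries of $\CAT(0)$ cube complexes are semisimple (Haglund), the hypothesis that $f$ is not pseudo-regularizable means exactly that $f$ acts as a hyperbolic (loxodromic) isometry of $C$, with translation length $\tau>0$.

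Next I would read off the structure of $\cent(f)$ from the minimal set. By Bridson and Haefliger, $\Min(f)$ is nonempty, convex, and splits as a metric product $\Min(f)\cong Z\times\R$ on which $f$ acts as $(\id,\,t\mapsto t+\tau)$; every $g\in\cent(f)$ preserves $\Min(f)$ and the translation direction of $f$, so measuring the translation length of $g$ along that direction defines a homomorphism $\rho\colon\cent(f)\to\R$ with $\rho(f)=\tau$ (orientation must be preserved, as a reflection would conjugate $f|_\R$ to $f^{-1}|_\R\neq f|_\R$). Because the axis can be realised combinatorially and each element translates it by an integral number of hyperplanes, the image $\rho(\cent(f))$ is a discrete, hence infinite cyclic, subgroup of $\R$. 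Set $H:=\ker\rho$ and pass to the finite-index subgroup $G_0:=\rho^{-1}(\tau\Z)$: there $\rho$ has infinite cyclic image generated by $\rho(f)$ and kernel $H$, and since $f$ is central in $\cent(f)$ the section $f^n\mapsto n$ makes the extension split as a direct product, giving $G_0=\gen{f}\times H$ of finite index in $\cent(f)$.

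It then remains to run the dichotomy on $H$. If $H$ is a torsion group we are in the second alternative. Otherwise choose an infinite-order $h\in H$; it commutes with $f$ and satisfies $\rho(h)=0$, so it is bounded along the axis of $f$ and acts on the orthogonal factor $Z$. The technical heart is to show that such a commuting symmetry forces the first alternative: when $h$ is hyperbolic transverse to $f$, the elements $f,h$ span a $\Z^2$ of commuting loxodromics and $\Min(f)$ contains a flat $\R^2$, whose extra direction corresponds to a positive-dimensional, $f$-invariant family of divisors of $X$; assembling this family into its associated linear system yields an $f$-invariant rational fibration $X\dashrightarrow Y$ with $0<\dim Y<\dim X$, i.e.\ $f$ permutes its fibers. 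The remaining configurations for an infinite-order $h$ I would reduce to this one, using that in characteristic $0$ over $\bar{\kk}$ a transverse unbounded symmetry cannot coexist with the rigidity of $f$ unless an invariant fibration is present.

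The main obstacle is precisely this passage from coarse geometry to birational geometry. The two delicate points are: (i) producing the honest rational map $X\dashrightarrow Y$ from an invariant flat, since a flat only supplies a combinatorial family of hyperplanes, and one must verify that the corresponding divisors move in a base-point-free-enough system to define a morphism to a base of the correct dimension (this is where algebraic closedness and characteristic $0$ are used, for instance through a Stein-type factorisation of the invariant fibration); and (ii) in the complementary rank-one situation, confirming that $H=\ker\rho$—which fixes a combinatorial axis of $f$ pointwise and is therefore conjugate into a pseudo-automorphism group acting trivially on the part of the N\'eron--Severi data moved by $f$—is genuinely \emph{torsion} rather than merely pseudo-regularizable. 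By contrast, the discreteness of $\rho(\cent(f))$ and the splitting producing the cyclic factor are comparatively routine once the axis is realised combinatorially.
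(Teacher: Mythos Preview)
Your setup contains an indexing error that matters: $\Bir(X)$ does not act on $\CC^{d-1}(X)$. In the paper's construction only $\Psaut^{\l}(X)$ acts on $\CC^{\l}(X)$, and $\Bir(X)=\Psaut^{0}(X)$. The complex where ``elliptic $\Leftrightarrow$ pseudo-regularizable'' holds is $\CC^{0}(X)$: vertices there are marked by birational maps and the equivalence is pseudo-isomorphism, so a fixed vertex means conjugacy into a pseudo-automorphism group. The paper's proof runs entirely on $\CC^{0}(X)$.

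Beyond this, your strategy diverges from the paper's, and the divergence is exactly at the step you flag as the obstacle. You propose to use the $\CAT(0)$ splitting $\Min(f)\cong Z\times\R$ (Bridson--Haefliger) and to extract an invariant fibration from a $\Z^2$-flat. Two problems: first, the flat torus theorem needs completeness of the $\CAT(0)$ metric, which is delicate for infinite-dimensional cube complexes (cf.\ the paper's insistence on the combinatorial metric and Haglund's semisimplicity); second, and more seriously, there is no mechanism in your outline that turns ``a flat in $\CC^0(X)$'' into ``a rational map $X\dashrightarrow Y$''. Hyperplanes in $\CC^0(X)$ are not divisors in a single model, and a combinatorial family of them does not obviously assemble into a linear system.

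The paper bypasses all of this with a concrete birational argument. After conjugating so that $(X,\id)$ lies on an axis of $f$, the hypersurfaces in $\Exc^1(f^n)$ are precisely the forward $f^{-1}$-orbits of the finitely many components $E_1,\dots,E_r$ of $\Exc^1(f)$. For any $g\in\cent(f)$ one computes $\Exc^1(f^lg^i)=\Exc^1(g^if^l)$ and deduces that $g^i$ sends most of these hypersurfaces back into the same finite union of $f^{-1}$-orbits; a pigeonhole on $i=1,\dots,r+1$ then produces $h=f^sg^{i-j}$ with $i\neq j$ stabilising one such hypersurface, hence its entire infinite $f$-orbit. Now the genuine algebro-geometric input enters: by \cite{bell2018invariant} (this is where $\bar\kk$ and characteristic $0$ are used), a birational self-map with infinitely many invariant hypersurfaces fixes a nonconstant rational function, and the fixed field $\kk(X)^h$ gives the $f$-equivariant fibration. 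If $h$ has infinite order the base has dimension $<\dim X$ and we are in the first alternative; if $h$ has finite order then some nonzero power of $g$ lies in $\langle f\rangle$, and once this holds for every $g$ the paper invokes \cite[Theorem~6.4]{genevois2019cubical} to obtain the direct-product finite-index subgroup.

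In short: your homomorphism $\rho$ and the splitting $\langle f\rangle\times H$ are morally the same endgame as the paper's use of Genevois, but your proposed route to the fibration alternative is a gap. The missing idea is the invariant-hypersurface theorem of Bell--Rogalski--Sierra (or Cantat over $\C$), applied not to $f$ or $g$ but to a carefully chosen word $f^sg^{i-j}$ produced by pigeonholing on the exceptional orbits.
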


As mentioned above, in dimension $2$ the torsion group $H$ in Theorem~\ref{centralizer} is always finite, so one could ask whether this is true in arbitrary dimension. {It also seems to be an interesting question whether one could loosen the assumption on the pseudo-regularizability of $f$. }

\subsection{Degree growth} To a birational transformation $f\in\Bir(\p_\kk^2)$ we can associate its degree $\deg(f)$. It is an interesting question how the degrees grow under iteration - a question that is well understood for surfaces (see Theorem~\ref{deggrowth} below), but only little is known in higher dimensions (see \cite{MR3818624} and \cite{cantat2018degrees}). The following theorem shows that the action of $\Bir(\p_\kk^d)$ on our $\CAT(0)$ cube complex gives new lower bounds for the degree growth of a large class of elements:

\begin{theorem}\label{thm_degree_growth}
	Let $\kk$ be any field and let $g\in \Bir(\p_\kk^d)$ be an element that is not pseudo-regularizable. Then the asymptotic growth of $\deg(g^n)$ is at least $\frac{1}{d+1}n$.
\end{theorem}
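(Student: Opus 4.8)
The plan is to make $g$ act on the $\CAT(0)$ cube complex $\CC^0(\p_\kk^d)$ furnished by Theorem~\ref{thm:CCcat0}, whose hyperplanes record the irreducible hypersurfaces that are contracted or created when one passes between two marked models, and to combine a translation-length estimate with a degree bound coming from the Jacobian. First I would invoke the dictionary between fixed points and regularization established earlier: a cyclic subgroup $\langle g\rangle$ fixes a vertex of $\CC^0(\p_\kk^d)$ precisely when $g$ can be conjugated so as to contract no divisor, i.e.\ when $g$ is pseudo-regularizable. Since $g$ is assumed not to be pseudo-regularizable, it has no fixed point. By the standard dichotomy for isometries of $\CAT(0)$ cube complexes with respect to the combinatorial ($\ell^1$) metric, every isometry is either elliptic or combinatorially hyperbolic; hence $g$ admits a combinatorial axis and its translation length $L(g)$ is a positive integer, so $L(g)\geq 1$.

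Next I would convert the linear displacement along the axis into a statement about degrees. Fixing a vertex $v$ on the axis gives $\d(v,g^n v)=n\,L(g)\geq n$, where $\d(v,g^n v)$ is the number of hyperplanes separating $v$ from $g^n v$. By construction this number is the size of the symmetric difference of the two associated (commensurated) sets of prime divisors, so it counts exactly the irreducible hypersurfaces contracted by $g^n$ together with those contracted by $g^{-n}$. It then remains to bound the number of contracted prime divisors of a Cremona transformation by its degree, and this is where the exponent $\tfrac{1}{d+1}$ enters: if $f\in\Bir(\p_\kk^d)$ has degree $\delta$, then the union of its contracted hypersurfaces lies in the Jacobian hypersurface $\{\operatorname{Jac}(f)=0\}$, which has degree $(d+1)(\delta-1)$, so $f$ contracts at most $(d+1)(\delta-1)$ irreducible hypersurfaces.

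Combining the two estimates yields $n\leq \d(v,g^n v)\leq (d+1)\bigl(\deg(g^n)+\deg(g^{-n})-2\bigr)$, from which the asymptotic lower bound $\deg(g^n)\geq \tfrac{1}{d+1}\,n$ is read off (either by recording the bound for the symmetrized degree sequence, or by feeding in a one-sided refinement of the hyperplane count so that only the divisors contracted by $g^n$ contribute). The main obstacle I anticipate is precisely the content of the second step: pinning down the exact relationship between the combinatorial distance in $\CC^0(\p_\kk^d)$ and the geometry of the exceptional loci, and arranging the bookkeeping of the two directions of contracted divisors so that it is compatible with the clean constant $\tfrac{1}{d+1}$. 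Once the hyperplanes of $\CC^0(\p_\kk^d)$ are identified with orbits of prime divisors and the Jacobian degree bound is in place, what remains is the short combination of the translation-length inequality with this degree bound.
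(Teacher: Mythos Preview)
Your outline is essentially the paper's argument---act on $\CC^0(\p_\kk^d)$, use that non-pseudo-regularizability forces $g$ to be loxodromic, and bound the number of contracted prime divisors via the Jacobian---but there is one genuine gap and one bookkeeping slip.

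The gap is exactly the point you flag and then leave unresolved: your inequality
\[
n \;\leq\; (d+1)\bigl(\deg(g^n)+\deg(g^{-n})-2\bigr)
\]
only controls the \emph{symmetrized} degree sequence. In dimension $\geq 3$ one does not have $\deg(f)=\deg(f^{-1})$ in general, so this does not by itself yield the one-sided bound $\deg(g^n)\gtrsim \tfrac{1}{d+1}n$. The paper closes this by first passing to the algebraic closure $\bar\kk$ (which changes neither the degrees nor the loxodromicity) and then invoking the result of Liu--Sebag that over an algebraically closed field $|\Exc^1(f)|=|\Exc^1(f^{-1})|$ for any birational self-map. From this one gets that $\nu^1(g)=\lim_n |\Exc^1(g^n)|/n$ is a \emph{positive integer}, hence $|\Exc^1(g^n)|$ already grows at least like~$n$, and the Jacobian bound $\deg(g^n)\geq \tfrac{1}{d+1}|\Exc^1(g^n)|+1$ gives the one-sided conclusion directly. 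Your proposed ``one-sided refinement of the hyperplane count'' is precisely this step, but it is not free: you need the Liu--Sebag input (or an equivalent).

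The bookkeeping slip: you fix $v$ on an axis of $g$ and then apply the Jacobian bound to the hypersurfaces counted by $\d(v,g^nv)$. But a vertex on the axis is represented by some marked variety $(A,\varphi)$, and $\d(v,g^nv)$ counts components of $\Exc^1(\varphi^{-1}g^n\varphi)$ on $A$, whereas the Jacobian bound is a statement about maps of $\p^d$. Cleaner is to stay at the base vertex $p=[(\p_\kk^d,\id)]$, where $\d(p,g^np)=|\Exc^1(g^n)|+|\Exc^1(g^{-n})|$ exactly, and use the asymptotic $\d(p,g^np)\sim n\,\ell(g)$; this is what the paper does via the invariant $\nu^1$.
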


\subsection*{Organization of the paper}
We start with Section~\ref{Preliminaries}, where we recall preliminaries about $\CAT(0)$ cube complexes and birational transformations in order to fix our notation and state the results needed in this paper. In Section~\ref{section_blow-up_cc}, we construct the blow-up complex $\Cb(S)$ for a regular projective surface $S$, and prove that it is a $\CAT(0)$ cube complex. In a second part we show several applications of this construction and prove the main theorems for surfaces. In Section~\ref{section_cc_higher_ranks} we construct the $\CAT(0)$ cube complexes $\CC^\l(X)$ for $\l=0,\dots,\dim(X){-1}$, where $X$ is an arbitrary variety. As for the blow-up complex, we study its geometry and use it to prove the theorems stated above.
We finish the article with a comparison of different isometric actions of $\Bir(S)$ on non-positively curved spaces, where $S$ is a regular projective surface. We also point out some open questions and possible directions for further research.

\subsection*{Acknowledgements} We thank J\'er\'emy Blanc, Michel Brion, Serge Cantat, Paolo Cascini, Yves Cornulier, R\'emi Coulon, {Thomas Delzant}, Bruno Duchesne, Andrea Fanelli, Anthony Genevois, St\'ephane Lamy, Alexandre Martin, and Immanuel Van Santen for many interesting discussions related to this work. We also thank the referee for the careful lecture and many useful remarks. 

\section{Preliminaries}\label{Preliminaries}
The goal of this section is to gather some notions and results needed about $\CAT(0)$ cube complexes and birational geometry. For details we refer to the standard literature, for instance \cite{sageev_lecturenotes} for $\CAT(0)$ cube complexes, and \cite{Liu} for algebraic geometry.

\subsection{$\CAT(0)$ cube complexes}
Recall that a \emph{cube complex} is the union of finite dimensional Euclidean unit cubes glued together by isometries between some faces.
A cube complex with a global bound on the dimension of its cubes is called \emph{finite dimensional} and its dimension is the maximal dimension of its cubes (see for instance Figure \ref{figure_hyperplanes}). If there is no such global bound, the cube complex is called \emph{infinite dimensional}. An \emph{orientation} of the cube complex is a choice of an orientation on each edge such that every pair of two opposite edges of a square have the same orientation.

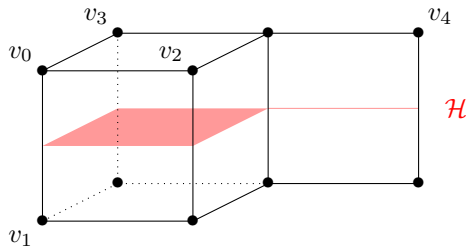
\begin{figure}[h]
	\begin{tikzpicture}
	\fill[color=red!40] (0,1)--(2,1)--(3,1.5)--(1,1.5);
	\draw[color=red!40] (3,1.5)--(5,1.5);
	\draw (0,0) -- (2,0) -- (2,2) -- (0,2)-- (0,0);
	\draw (3,0.5) -- (3,2.5) -- (1,2.5);
	\draw[dotted] (1,2.5)--(1,0.5) -- (3,0.5);
	\draw (2,0) -- (3,0.5); 
	\draw (2,2) -- (3,2.5);
	\draw (0,2) -- (1,2.5);
	\draw[dotted] (0,0) -- (1,0.5);
	\draw (0,0) node {$\bullet$} node[below left] {$v_1$};
	\draw (2,0) node {$\bullet$};
	\draw(2,2) node {$\bullet$} node[above left] {$v_2$};
	\draw (0,2) node {$\bullet$} node[above left] {$v_0$};
	\draw (1,0.5) node {$\bullet$};
	\draw (3,0.5) node {$\bullet$} ;
	\draw(3,2.5) node {$\bullet$};
	\draw (1,2.5) node {$\bullet$} node[above left] {$v_3$};
	\draw (3,0.5) -- (5,0.5)--(5,2.5)--(3,2.5);
	\draw(5,2.5) node {$\bullet$} node[above right] {$v_4$} ;
	\draw (5,0.5) node {$\bullet$};
	\draw(5.5,1.5) node {$\textcolor{red}{\mathcal{H}}$} ;
	\end{tikzpicture}
	\caption{Example of a $\CAT(0)$ cube complex of dimension $3$.\label{figure_hyperplanes}} 
\end{figure}

Let $\mathcal{L}$ be a cube complex. From a vertex $v$ of $\mathcal{L}$, we can construct a simplicial complex $\Lk(v)$ called the \emph{link} of $v$. Its vertices are the vertices of $\mathcal{L}$ that are adjacent to $v$, and a set of vertices $\{v_1,\dots, v_n\}$ in $\Lk(v)$ spans a {$(n-1)$-}simplex in $\Lk(v)$ if $\{v, v_1,\dots, v_n\}$ belongs to the set of vertices of a $n$-cube in $\mathcal{L}$ (for instance in Figure~\ref{figure_hyperplanes}, $\Lk(v_0)$ is a {$2$-}simplex). The link of $v$ is called \emph{flag} if every finite set $\{v_1,\dots, v_n\}$ of vertices of $\Lk(v)$ that are pairwise adjacent, spans a $(n-1)$-simplex in $\Lk(v)$.
A \emph{CAT(0) cube complex} is a simply connected cube complex such that the link of any vertex is flag (see for instance Figure \ref{figure_hyperplanes}). By a theorem of Mikhail Gromov \cite[Section 4.2.C]{Gromov} in the finite dimensional case, and by Ian Leary \cite{leary} in the infinite dimensional case, the Euclidean metric on a cube complex is $\CAT(0)$ if and only if the cube complex is $\CAT(0)$ in the sense of the above definition.

Throughout all the article, we will use the \emph{combinatorial metric}, which is defined on the set of vertices of cube complexes in the following way. Let $\mathcal{V}$ be the set of vertices of a given cube complex $\mathcal{L}$. A (combinatorial) \emph{path of length} $n$ is a sequence of $n+1$ vertices $v_0,\dots, v_n$ such that two successive vertices are adjacent in the cube complex. The distance between two vertices $x,y\in \mathcal{V}$ is the minimal length of all the paths connecting these two vertices. A \emph{geodesic} is a path realizing the distance. Note that in our setting, by connectedness, there exists a geodesic between any pair of vertices, which is not necessarily unique.

The distance between two vertices is better understood in the context of ``hyperplanes''.
Let $\sigma$ be a $n$-dimensional cube in a cube complex. A \emph{midcube} of $\sigma$ is a $(n-1)$-dimensional unit cube passing through the barycenter of $\sigma$ parallely to one of the faces of $\sigma$.
Two edges $e$ and $f$ are {equivalent} if there exists a sequence of edges $\{e_i\}_{0\leq i \leq n}$ such that $e_0=e$, $e_n=f$ and for every $0\leq i \leq n-1$, the two edges $e_i$ and $e_{i+1}$ are the opposite edges of a $2$-cube. We denote the equivalence class of an edge $e$ by $[e]$. Given an equivalence class of edges $[e]$, the \emph{hyperplane} dual to $[e]$ is the collection of all the midcubes that intersect some edges of $[e]$. For instance, in Figure \ref{figure_hyperplanes}, $\mathcal{H}$ is the hyperplane dual to the edge between $v_0$ and $v_1$. By abuse of notation, we will still sometimes denote this hyperplane by $[e]$.

A hyperplane $[e]$ of a $\CAT(0)$ cube complex defines two disjoint half-spaces obtained by taking the two connected components of the complement of $[e]$ inside the cube complex (see \cite[Theorem 4.10]{Sageev-ends_of_groups}). {Once we have chosen an orientation, }we denote them by $[e]^+$ and $[e]^-$, with the convention that $[e]^+$ contains the vertex $v$ if the edge $e=[v,v']$ is oriented from $v$ to $v'$. Consider two vertices $v_1$ and $v_2$ in a $\CAT(0)$ cube complex. A hyperplane $[e]$ \emph{separates} $v_1$ and $v_2$ if they lie in two different half-spaces: either $v_1\in[e]^+$ and $v_2\in[e]^-$, or $v_1\in[e]^-$ and $v_2\in[e]^+$.

\begin{theorem}[{\cite[Theorem 4.13]{Sageev-ends_of_groups}}]\label{theorem:combinatorial_geodesic}
	Let $\mathcal{L}$ be a $\CAT(0)$ cube complex and let $v_1$ and $v_2$ be two vertices. A path joining $v_1$ and $v_2$ is geodesic if and only if the sequence of hyperplanes it crosses has no repetition. In particular, the combinatorial distance between $v_1$ and $v_2$ is equal to the number of hyperplanes of $\mathcal{L}$ that separate $v_1$ and $v_2$.
\end{theorem}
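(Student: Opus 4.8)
The plan is to reduce the whole statement to one elementary combinatorial observation about how a path meets the two half-spaces of a single hyperplane, together with one genuinely geometric input, the convexity of half-spaces. Throughout I use the separation statement recalled just above: every hyperplane $H=[e]$ cuts $\mathcal{L}$ into the half-spaces $[e]^+$ and $[e]^-$ (see \cite[Theorem 4.10]{Sageev-ends_of_groups}); an edge $f$ meets the midcubes of $H$ exactly when $[f]=[e]$, and otherwise both endpoints of $f$ lie in the same half-space of $H$.

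First I would establish the \emph{parity lemma}: for any path $p$ from $v_1$ to $v_2$ and any hyperplane $H$, the number of edges of $p$ dual to $H$ (the number of times $p$ \emph{crosses} $H$) is odd if $H$ separates $v_1$ and $v_2$, and even otherwise. This is immediate from the separation statement, since traversing an edge dual to $H$ switches the half-space of $H$ one currently occupies while traversing any other edge preserves it; as $p$ starts on the side of $v_1$ and ends on the side of $v_2$, the number of switches has the parity of ``$v_1,v_2$ lie on opposite sides''. An immediate consequence is the \emph{lower bound}: $p$ must cross every separating hyperplane at least once, so the length of $p$ is at least the number $N$ of hyperplanes separating $v_1$ and $v_2$; in particular $\d(v_1,v_2)\geq N$.

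The easy implication then follows. If $p$ crosses no hyperplane twice, the parity lemma shows that the hyperplanes it crosses are crossed exactly once and are therefore exactly the $N$ separating ones, so $p$ has length exactly $N$. Combined with the lower bound and with $\d(v_1,v_2)\leq\operatorname{length}(p)$ this forces $\d(v_1,v_2)=N=\operatorname{length}(p)$, so $p$ is a geodesic, proving the ``no repetition $\Rightarrow$ geodesic'' direction. For the converse I would argue by contradiction using \emph{convexity of half-spaces}: any geodesic joining two vertices of a half-space stays inside it. Suppose a geodesic $p$ crosses $H$ at edges $e_a$ and $e_b$ with $a<b$; walking through $e_a$ moves from one side of $H$ to the other and walking through $e_b$ moves back, so the endpoints $v_{a-1}$ and $v_b$ of the enclosed subpath lie in the same half-space, say $[e]^-$, while the intermediate vertex $v_a$ lies in $[e]^+$. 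The subpath from $v_{a-1}$ to $v_b$ is itself geodesic but leaves $[e]^-$, contradicting convexity. Hence a geodesic never repeats a hyperplane. Since a shortest path exists by connectedness, it is repetition-free, and the computation above gives $\d(v_1,v_2)=N$, completing the distance formula.

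The main obstacle is the convexity of half-spaces, the only step that uses the full $\CAT(0)$ hypothesis (flagness of links together with simple connectedness) rather than the separation statement alone. I would prove it by the standard local-to-global argument: if a geodesic exits and re-enters a closed half-space, the flag condition lets one replace a ``corner'' square by its two opposite edges, pushing the path towards the half-space without increasing its length, and simple connectedness guarantees that this disc-filling procedure terminates; equivalently, one constructs the $1$-Lipschitz gate projection onto a closed half-space and observes that it strictly shortens any path that leaves and returns. This convexity is standard (see \cite{sageev_lecturenotes}), and the parity lemma together with this push-across input assembles into the stated theorem.
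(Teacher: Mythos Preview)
The paper does not give its own proof of this statement: it is quoted verbatim as \cite[Theorem~4.13]{Sageev-ends_of_groups} and used as a black box throughout. There is therefore nothing in the paper to compare your argument against.

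That said, your sketch is correct and is essentially the standard proof one finds in the literature (e.g.\ in Sageev's original paper or in \cite{sageev_lecturenotes}). The parity lemma and the lower bound are immediate from the two-sided separation property you cite, and the only substantive step is indeed the convexity of half-spaces in the combinatorial metric, which you correctly identify as the place where the full $\CAT(0)$ hypothesis enters. Your outline of that step via gate projections or disc diagrams is the usual route.
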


Throughout this paper, we will only consider isometries with respect to the combinatorial metric. For simplicity, we will just call them isometries, in contrast to other authors who call them combinatorial isometries.
An isometry of a $\CAT(0)$ cube complex $\mathcal{L}$ acts on the set of hyperplanes of $\mathcal{L}$. The isometry $f$ has \emph{an inversion along the hyperplane $[e]$} if $f([e]^+)=[e]^-$ (which implies that $f([e])=[e]$ and $f([e]^-)=[e]^+$). We say that $f$ acts \emph{without inversion} if $f$ has no inversion along any hyperplane.

The {\it combinatorial translation length} of an isometry $f$ of the cube complex $\mathcal{L}$ is the number 
\[\l(f)\coloneqq\min\{d(f(x), x)\mid x \in V(\mathcal{L})\},\] where $V(\mathcal{L})$ is the set of vertices of $\mathcal{L}$.
The set of vertices realizing the translation length is called the \emph{minimizing set} of $f$ and it is denoted by
\[\Min(f)\coloneqq\{x\in V(\mathcal{L}) \mid \dist(x,f(x))=\l(f)  \} .\]

An isometry $f$ is called \emph{elliptic} if $\l(f)=0$, and \emph{loxodromic} if it preserves an infinite geodesic path and acts as a {non-trivial} translation along it. Such a geodesic path is called an \emph{axis of $f$}. By \cite[Corollary 5.2]{haglund_isometries_semisimple}, the translation length of $f$ on any of its axis is $\l(f)$, hence the vertices of any axis of $f$ are contained in $\Min(f)$, and every element in $\Min(f)$ lies on an axis of $f$. Moreover, for any $n\in \N$, the isometry $f^n$ is also loxodromic, any axis of $f$ is also an axis of $f^n$ and $\l(f^n)=n\l(f)$. If $f$ is neither elliptic nor loxodromic, it is called \emph{parabolic}. 

These definitions exist also in the context of the  Euclidean metric, instead of the combinatorial one. However, for infinite dimensional cube complexes, these notions do not coincide; there exist isometries that are loxodromic with respect to the combinatorial metric, but parabolic with respect to the $\CAT(0)$-metric (see for instance \cite[Example~3.2]{haglund_isometries_semisimple}). This phenomena occurs when the (combinatorial) axis contains arbitrarily many vertices that are contained in the same cube. But in this paper we always work with the combinatorial metric.

Isometries of $\CAT(0)$ cube complexes have been described by Frédéric Haglund. 
\begin{proposition}[\cite{haglund_isometries_semisimple}]\label{prop_action_semisimple_hag}
	Every isometry $f$ of a $\CAT(0)$ cube complex $\mathcal{L}$ such that $f$ and each of its iterates act without inversion is either loxodromic or elliptic.
\end{proposition}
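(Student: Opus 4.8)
The plan is to prove that an isometry $f$ acting without inversion (together with all its iterates) is either elliptic or loxodromic, i.e.\ that it cannot be parabolic. Parabolic, by definition, means $\l(f)>0$ but $f$ admits no axis. So the goal is to show that whenever $\l(f)=\min_x \dist(x,f(x))>0$, there exists a vertex $x_0\in\Min(f)$ lying on an $f$-invariant infinite geodesic along which $f$ translates by $\l(f)$. First I would fix a vertex $x_0$ realizing the translation length, so $\dist(x_0,f(x_0))=\l(f)$, and consider a geodesic segment $\gamma$ from $x_0$ to $f(x_0)$. The natural candidate for an axis is the bi-infinite path obtained by concatenating the translates $f^n(\gamma)$ for all $n\in\Z$. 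The entire content of the proposition is then the claim that this concatenation is an honest geodesic.

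By Theorem~\ref{theorem:combinatorial_geodesic}, a path is geodesic precisely when it crosses no hyperplane twice, and the distance equals the number of separating hyperplanes. So the key step is to show that the concatenation $\cdots\,f^{-1}(\gamma)\,\gamma\,f(\gamma)\,f^2(\gamma)\cdots$ crosses each hyperplane at most once. The strategy here is the standard ``no-backtracking'' argument: let $\HH_1,\dots,\HH_{\l(f)}$ be the hyperplanes crossed by $\gamma$ (all distinct, since $\gamma$ is geodesic). It suffices to prove that the hyperplanes crossed by $f(\gamma)$, namely $f(\HH_1),\dots,f(\HH_{\l(f)})$, are disjoint from the $\HH_i$, and more generally that the whole family $\{f^n(\HH_i)\}_{n,i}$ consists of pairwise distinct hyperplanes arranged monotonically. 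The crucial sub-claim is that no hyperplane $\HH$ separating $x_0$ from $f(x_0)$ can satisfy $f(\HH)=\HH$. Indeed, if some $\HH_i$ were fixed by $f$, then since $f$ acts without inversion it would preserve each half-space of $\HH_i$; but $\HH_i$ separates $x_0$ from $f(x_0)$, meaning $x_0$ and $f(x_0)=f(x_0)$ lie on opposite sides, which forces $f$ to swap the sides, contradicting the no-inversion hypothesis. This is exactly where the ``without inversion'' assumption is used.

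Granting that no separating hyperplane is $f$-invariant, I would then upgrade this to the statement that $\HH_i$ and $f(\HH_j)$ never coincide, using the minimality of $x_0$: if $f(\gamma)$ shared a hyperplane with $\gamma$, one could produce a shorter path from a vertex to its image, contradicting $\dist(x_0,f(x_0))=\l(f)$. The cleanest way to organize this is to show that for each $i$ the hyperplane $\HH_i$ separates $x_0$ from $f^n(x_0)$ for all $n\geq 1$ of the same sign, so that the separating hyperplanes accumulate monotonically and the translates $f^n(\HH_i)$ are all distinct; this yields $\dist(x_0,f^n(x_0))=n\,\l(f)$, which both certifies that the concatenated path is geodesic and identifies it as an axis. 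The main obstacle, and the heart of the argument, is precisely this monotonicity/non-repetition of hyperplanes: one must rule out the parabolic scenario in which translates of $\gamma$ fold back and reuse hyperplanes, producing sublinear displacement growth. I expect the non-inversion hypothesis to be exactly the tool that prevents a fixed separating hyperplane, while the minimality of $x_0$ prevents two distinct translates from sharing a hyperplane; combining these gives that every iterate $f^n$ is loxodromic with the same axis, as recorded in the discussion preceding the proposition.
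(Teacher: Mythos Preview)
The paper does not give its own proof of this proposition; it is quoted as a black box from Haglund's paper. Your outline is in fact the shape of Haglund's argument: choose $x_0\in\Min(f)$, take a geodesic $\gamma$ from $x_0$ to $f(x_0)$, and show that the concatenation $\bigcup_{n}f^n(\gamma)$ is a bi-infinite geodesic by checking, via Theorem~\ref{theorem:combinatorial_geodesic}, that no hyperplane is crossed twice. Your treatment of the case $f(\HH)=\HH$ is correct and is indeed where the no-inversion hypothesis enters.

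The gap is the step you phrase as ``if $f(\gamma)$ shared a hyperplane with $\gamma$, one could produce a shorter path from a vertex to its image, contradicting $\dist(x_0,f(x_0))=\l(f)$''. This is the entire content of the proposition and is not justified. Concretely, suppose some $\HH$ separates both $x_0$ from $f(x_0)$ and $f(x_0)$ from $f^2(x_0)$, with $f(\HH)\neq\HH$. You must produce a vertex $y$ with $\dist(y,f(y))<\l(f)$, but your sketch does not. Observing that $\dist(x_0,f^2(x_0))<2\l(f)$ gives nothing, since a priori $\l(f^2)$ could be strictly smaller than $2\l(f)$---that is precisely what happens for parabolic isometries. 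In Haglund's paper the missing step is the \emph{skewering lemma}: for $x_0\in\Min(f)$ and any hyperplane $\HH$ separating $x_0$ from $f(x_0)$, if $h$ denotes the half-space containing $x_0$ then $f^{-1}(h)\subsetneq h$. This forces the half-spaces $\{f^{n}(h)\}_{n\in\Z}$ to be strictly nested, hence the hyperplanes $\{f^n(\HH)\}$ are pairwise distinct and the concatenated path is geodesic. Proving the skewering lemma is where minimality is actually exploited (via a gate-projection or median argument onto the half-space $h$), and it is also what makes the hypothesis on \emph{all} iterates relevant, since one must exclude periodic hyperplanes $f^n(\HH)=\HH$ for $n>1$ as well. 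Your sentence ``minimality of $x_0$ prevents two distinct translates from sharing a hyperplane'' names the right ingredient but does not supply the mechanism.
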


Note that if an isometry $f$ of a $\CAT(0)$ cube complex preserves an orientation, then $f$ and all its iterates act without inversion, hence $f$ is semi-simple.

A useful property of $\CAT(0)$ cube complexes that will be used throughout this paper is given in the following proposition, which has been shown in \cite{gerasimov1998fixed} for the case of finitely generated groups and in \cite[Theorem~11.9]{rollerthesis} for the general case (see also {\cite[Corollary 7.G.4 and Remark 7.F.8]{cornulier_wallings}}).

\begin{proposition}\label{fixedpoint}
	If a group acts isometrically on a $\CAT(0)$ cube complex with a bounded orbit preserving the orientation then the group fixes a vertex.
\end{proposition}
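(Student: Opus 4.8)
The plan is to produce a single vertex fixed by all of $G$, by first finding a $G$-fixed point and then using the orientation to snap it to a vertex. First I would record what the orientation buys us: since $G$ preserves the orientation, the assignment $H\mapsto H^+$ is $G$-equivariant, i.e.\ $g(H^+)=(gH)^+$ and $g(H^-)=(gH)^-$ for every hyperplane $H$ and every $g\in G$; in particular no element (nor any iterate) inverts a hyperplane. By Proposition~\ref{prop_action_semisimple_hag} every $g\in G$ is therefore elliptic or loxodromic, and a loxodromic element has unbounded orbit (its translation length is positive and $\l(g^n)=n\l(g)$); as the orbit is bounded, every element of $G$ is elliptic. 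This is not yet a common fixed vertex, so the real work is to globalize.

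Fix a vertex $x_0$ whose orbit $O=Gx_0$ is bounded, say of combinatorial diameter $D$, and let $Y=\operatorname{conv}(O)$ be its combinatorial convex hull. Then $Y$ is a $G$-invariant convex subcomplex, hence itself a $\CAT(0)$ cube complex, and I claim it is finite dimensional with $\dim Y\le D$. Indeed, by Theorem~\ref{theorem:combinatorial_geodesic} a hyperplane crossing $Y$ (meeting both of its sides) must already cross $O$, since otherwise $O$, and with it $Y$, lies in one halfspace; combined with the standard fact that passing to convex hulls does not increase the diameter in a median graph, this gives $\operatorname{diam}(Y)=\operatorname{diam}(O)=D$. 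As the two antipodal vertices of any $k$-cube are at distance $k$ and are vertices of $Y$ whenever that cube lies in $Y$, the complex $Y$ can contain no cube of dimension larger than $D$.

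Now I would pass to the $\CAT(0)$ (Euclidean) metric on $Y$. Because $Y$ is finite dimensional it is assembled from finitely many isometry types of cells, so this metric is complete; moreover the Euclidean distance between two vertices is at most their combinatorial distance, so $O$ is bounded for the Euclidean metric as well. The classical Bruhat--Tits center lemma for complete $\CAT(0)$ spaces then yields a point $p\in Y$ fixed by every isometry preserving the bounded set $O$, in particular by all of $G$. It remains to convert $p$ into a fixed vertex, and this is exactly where the orientation hypothesis is essential: the point $p$ lies in the relative interior of a unique cube $\sigma$, and since $G$ fixes $p$ and acts cellularly it stabilizes $\sigma$ and permutes the finite set $H_1,\dots,H_n$ of hyperplanes dual to $\sigma$. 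Let $v^*$ be the unique vertex of $\sigma$ lying in $H_i^+$ for all $i$. For $g\in G$ the vertex $gv^*$ lies in $g(H_i^+)=(gH_i)^+$ for every $i$, and since $g$ permutes the $H_i$ it follows that $gv^*$ lies on the positive side of every $H_j$, whence $gv^*=v^*$ by uniqueness. Thus $v^*$ is the desired $G$-fixed vertex.

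The main obstacle is producing the fixed point $p$ in full generality: in the infinite dimensional case one must first cut down to the finite dimensional invariant subcomplex $Y$ (equivalently, control the diameter of the convex hull) before the completeness of the $\CAT(0)$ metric, and hence the center lemma, become available. By contrast the orientation hypothesis is used only at the last step, and it is genuinely needed: without it a single inversion, such as the order two swap of the two endpoints of one edge, has bounded orbit and a fixed point (the midpoint of the edge, i.e.\ a point lying on a hyperplane) yet no fixed vertex. An alternative that avoids the metric altogether is the purely combinatorial ``majority vote'' construction on halfspaces in the spirit of \cite{rollerthesis} and \cite{cornulier_wallings}: one uses the bounded orbit to select, $G$-equivariantly, a coherent halfspace for each hyperplane satisfying the descending chain condition, which is precisely the data of a $G$-fixed vertex.
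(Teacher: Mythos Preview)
First, a framing remark: the paper does not actually prove Proposition~\ref{fixedpoint}. It is stated as a known result, with references to Gerasimov for the finitely generated case and to Roller's thesis \cite{rollerthesis} (and Cornulier \cite{cornulier_wallings}) for the general case. So there is no ``paper's own proof'' to compare against; what you have written is an independent attempt at a cited theorem.

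Your main argument has a genuine gap at the reduction step. The asserted ``standard fact that passing to convex hulls does not increase the diameter in a median graph'' is false once the set has more than two points, and in the infinite-dimensional setting the convex hull of a bounded orbit need not even be bounded. Take $X$ to be the $\CAT(0)$ cube complex whose vertices are the finite subsets of $\mathbb{N}$, with $S$ adjacent to $T$ when $\lvert S\mathbin{\triangle}T\rvert=1$; the hyperplanes are the $H_n$ separating $\{S:n\in S\}$ from $\{S:n\notin S\}$, and the group $G$ of finitely supported permutations of $\mathbb{N}$ acts preserving the orientation ``towards the smaller set''. The orbit $O=G\cdot\{1\}=\{\{n\}:n\in\mathbb{N}\}$ has diameter $2$, but for every $n$ both halfspaces of $H_n$ meet $O$, so no halfspace contains $O$ and hence $\operatorname{conv}(O)=X$, which is unbounded and infinite dimensional. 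Your inequality $\dim Y\le D$ therefore fails, the $\CAT(0)$ metric on $Y$ need not be complete, and the circumcenter lemma is unavailable. (The proposition itself is of course still true in this example: $G$ fixes the vertex~$\varnothing$.) Already the finite truncations show the diameter claim is wrong: in $\{0,1\}^n$ the set $\{e_1,\dots,e_n\}$ has diameter $2$ while its combinatorial convex hull is the whole cube, of diameter~$n$.

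Your final ``snapping'' step, using the orientation to select the unique vertex $v^*$ of the carrier cube lying on the positive side of every dual hyperplane, is correct and is exactly where the orientation hypothesis enters. The difficulty is entirely in producing the fixed point $p$. The alternative you sketch at the end --- the purely combinatorial halfspace/majority-vote construction in the spirit of \cite{rollerthesis} and \cite{cornulier_wallings} --- is precisely the route taken in the references the paper cites, and is what establishes the result in full generality; that is the argument to develop, not the metric one.
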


This is a generalization of the well known result that a group acting isometrically on a complete $\CAT(0)$ metric space with {a bounded orbit} has a fixed point.

\subsection{Birational transformations}

Let $X$ and $Y$ be varieties over a field $\kk$. A \emph{rational map} $X\dashrightarrow Y$ is an equivalence class of pairs $(U,g)$, where $U\subset X$ is an open dense set and $g\colon U\to Y$ a morphism. Two pairs $(U, g_1)$ and $(V, g_2)$ are equivalent if $g_1|_{U\cap V}=g_2|_{U\cap V}$. By abuse of notation, we denote a rational map represented by $(U,g)$ just by $g$. 
A rational map represented by $(U, g)$ is \emph{dominant}, if the image of $g$ contains an open dense set of $Y$. If $f\colon Y\dashrightarrow Z$ and $g\colon X\dashrightarrow Y$ are dominant rational maps, we can compose $f$ and $g$ in the obvious way and obtain a dominant rational map $fg\colon X\dashrightarrow Z$. 

A rational map $f\colon X\dashrightarrow Y$ is called a \emph{birational map}, if $f$ is dominant and if there exists a dominant rational map $f^{-1}\colon Y\dashrightarrow X$ such that $ff^{-1}=\id$ and $f^{-1}f=\id$. If there exists a birational map between two varieties $X$ and $Y$, they are said to be \emph{birationally equivalent}. In particular, if $X$ and $Y$ are birationally equivalent, there exist open dense subsets $U\subset X$ and $V \subset Y $ that are isomorphic. For a variety $X $ we denote by $\Bir(X )$ the group of birational transformations from $X $ to itself. If $X $ is a rational variety of dimension $n$, the group $\Bir(X )$ is isomorphic to the Cremona group in $n$ variables $\Cr_n(\kk):=\Bir(\p_\kk^n)$.

The \emph{indeterminacy locus} of a rational map $f\colon X \dashrightarrow Y $ is the closed subset $\Ind(f)\subset X $ consisting of all the points of $X $, where $f$ is not defined, i.e., the points $p\in X $ such that there exists no representative $(U , g)$ of $f$ such that $U $ contains $p$. 
The \emph{exceptional locus} of a birational transformation $f\colon X \dashrightarrow Y $ is the closed subset of points of $X $ where $f$ is not a local isomorphism, i.e., the points that are not contained in any open set $U \subset X $ such that the restriction of $f$ to $U $ induces an isomorphism to the image. 

Let $S $ be a surface over $\kk$ and  $p$ be a closed point on $S$. The \emph{blow-up} of $S$ in $p$ is a variety $Bl_p$ with a morphism $\pi\colon Bl_p\to S $ such that the inverse image of $p$ is a Cartier divisor and such that the following universal property is satisfied: For every morphism $\pi'\colon S '\to S $ such that the inverse image of $p$ is a Cartier divisor, there exists a unique morphism $f\colon S '\to Bl_p$ satisfying $\pi'=\pi f$. Blow-ups always exist and if $S $ is projective, then $Bl_p$ is projective. 

Consider two closed points $p$ and $q$ on the surface $S $. Blowing up first $p$ and then $q$, or first $q$ and then $p$ gives the same surface. Nevertheless we need to define properly how to consider $q$ as a closed point on the surface $Bl_p$.
For this, we need to introduce the so-called bubble space. Let $S $, $T $ and $T' $ be regular surfaces over $\kk$ such that $\pi\colon T \rightarrow S $ and $\pi'\colon T' \rightarrow S $ are surjective birational morphisms. Such surfaces $T $ and $T' $ are said to \emph{dominate} $S $. Two closed points $p\in T $ and $p'\in T' $ are equivalent if $\pi^{-1}\circ \pi'$ is a local isomorphism on a neighborhood of $p'$ and maps $p'$ to $p$. The \emph{bubble space}, denoted by $\B \B(S )$, is the set of all closed points on all regular surfaces over $\kk$ dominating $S $ up to this equivalence relation.

Any birational transformation between projective surfaces can be factorized into blow-ups of closed points in the following strong sense:

\begin{theorem}[ {\cite[\href{https://stacks.math.columbia.edu/tag/0C5Q}{Tag 0C5Q}]{Stack_project}}]\label{factorization}
	Let $S $ and $S '$ be projective regular surfaces over a field $\kk$ and let $f\colon S \dashrightarrow S '$ be a birational transformation. Then there exists a projective surface $T$ {over $\kk$} and two morphisms $\eta\colon T \to S $, $\rho\colon T \to S '$ such that $f=\rho\eta^{-1}$, and we can factorize $\eta\colon T \to S ^n\to\cdots\to S ^1\to S ^0=S $ and $\rho\colon T \to T ^m\to\cdots\to T ^1\to T ^0=S' $, where each morphism is a blow-up in a closed point.
\end{theorem}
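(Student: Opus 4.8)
The plan is to prove the statement in two logically separate steps: first I would \emph{eliminate the indeterminacy} of $f$ so as to obtain a single regular projective surface $T$ dominating both $S$ and $S'$ through morphisms, and then I would show that \emph{any} birational morphism between regular projective surfaces factors as a finite sequence of blow-ups at closed points. Applying this second step to each of the two projections $\eta\colon T\to S$ and $\rho\colon T\to S'$ produces exactly the two factorizations in the statement. The decomposition is natural because the two steps are genuinely different in character: the first is about extending a rational map to a morphism, the second is about decomposing a morphism.

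For the elimination of indeterminacy, I would first use that a rational map from a regular surface to a projective variety is a morphism away from a \emph{finite} set of closed points, since its indeterminacy locus has codimension at least two (valuative criterion of properness applied on the regular, hence normal, source). Embedding $S'$ into some $\P^N_\kk$, the map $f$ is given on a dense open set by a linear system $\mathfrak{d}$ on $S$ whose base points are precisely the points of $\Ind(f)$. The quantitative engine is that blowing up a base point $p$ of multiplicity $m\geq 1$ replaces $\mathfrak{d}$ by its strict transform $\mathfrak{d}'$, with self-intersection $(\mathfrak{d}')^2=\mathfrak{d}^2-m^2[\kappa(p):\kk]$, a strict decrease by a positive integer while $(\mathfrak{d}')^2$ stays bounded below by $0$ as long as base points remain. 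Hence after finitely many blow-ups the system becomes base-point-free and the induced rational map is a morphism; this realizes $\eta\colon T\to S$ as a composition of blow-ups together with the extended morphism $\rho\colon T\to S'$.

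For the factorization of a birational morphism $g\colon X\to Y$ of regular projective surfaces, I would induct on $\rk\Pic(X)-\rk\Pic(Y)$, which equals the number of irreducible curves contracted by $g$. If this number is $0$, then $g$ has finite fibres and, being birational onto the normal surface $Y$, is an isomorphism by Zariski's main theorem. Otherwise $g^{-1}$ has a fundamental point $p\in Y$, the image of a contracted curve, and the crucial lemma is that $g$ factors through the blow-up $Bl_p$ of $p$: since $X$ is regular, hence locally factorial, the inverse image ideal sheaf $g^{-1}(\mathfrak{m}_p)\cdot\mathcal{O}_X$ is invertible, so the universal property of the blow-up recalled in the preliminaries yields a unique morphism $X\to Bl_p$ commuting with the projections to $Y$. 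This morphism is again birational between regular projective surfaces and contracts one fewer curve, because $\rk\Pic(Bl_p)=\rk\Pic(Y)+1$, so the induction closes.

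The main obstacle is the factorization lemma in the second step: one must verify that $g^{-1}(\mathfrak{m}_p)\cdot\mathcal{O}_X$ is genuinely an invertible sheaf before the universal property of the blow-up can be invoked. Concretely, this amounts to showing that the scheme-theoretic fibre of $g$ over $p$ is a Cartier divisor, with no embedded components, which is where the regularity of $X$ (through the unique factorization property of its local rings) is used in an essential way. A secondary subtlety, absent over algebraically closed fields, is that the blown-up closed points may have nontrivial residue field extensions $\kappa(p)/\kk$; this only rescales the intersection-theoretic constants, as in the factor $[\kappa(p):\kk]$ appearing above, and affects neither the termination of the base-point reduction nor the induction on Picard rank, so the argument goes through over an arbitrary field $\kk$.
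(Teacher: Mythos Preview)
The paper does not provide its own proof of this theorem: it is quoted from the Stacks Project (Tag 0C5Q) as a black box and only \emph{used} in the construction of the blow-up complex. So there is no in-paper argument to compare against.

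Your outline is the standard one and matches what the Stacks Project does. The two-step decomposition (resolve indeterminacy, then factor each projection) is exactly right, and the induction on the difference of Picard ranks via the universal property of the blow-up is the correct mechanism. One comment on the crux you flag: the invertibility of $g^{-1}(\mathfrak{m}_p)\cdot\mathcal{O}_X$ does not follow from local factoriality alone --- the maximal ideal of a two-dimensional regular local ring is not principal. What makes it work is the additional input that the fibre $g^{-1}(p)$ is \emph{connected and purely one-dimensional} (Zariski's main theorem plus $g_*\mathcal{O}_X=\mathcal{O}_Y$), so that at every closed point $x$ of the fibre the ideal $(g^*u,g^*v)$ in $\mathcal{O}_{X,x}$ has height at most $1$; writing $g^*u=a\alpha$, $g^*v=a\beta$ with $a=\gcd$, one then argues that $(\alpha,\beta)=(1)$, since otherwise $\alpha,\beta$ would be a system of parameters and $g$ would be quasi-finite at $x$. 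You clearly see that this is where the work lies; just be aware that ``locally factorial'' by itself is not enough and the birationality of $g$ enters essentially.
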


If $S $ is a regular surface, then $Bl_p$ is again regular (\cite[SGA6, Expose VII, Proposition 1.8]{SGA6}). So in particular, if $S $ and $S '$ are projective regular surfaces, then all the surfaces $S ^1,\dots, S ^n$, $T , T ^1,\dots, T ^m$ appearing in the factorization given by Theorem~\ref{factorization} are projective and regular.

Consider a finite number of projective and regular surfaces over $\kk$ that are birationally equivalent, Theorem~\ref{factorization} implies in particular that there exists a projective and regular surface over $\kk$ that dominates all of these surfaces.

\begin{remark}\label{remark_unique_factorisation}
	By \cite[Tag 0C5J]{Stack_project} $T $ can be chosen minimal in the sense that for any other projective regular surface $T' $ satisfying Theorem~\ref{factorization}, there exists a surjective morphism $\pi\colon T' \rightarrow T $. This implies in particular that $T $ and $T' $ are respectively obtained from $S $ by blowing up a unique sequence of points in the bubble space $\B\B(S )$.
\end{remark} 
This remark allows us to define the set of base points of $f$.
\begin{definition}
Let $f\colon S \dashrightarrow S' $ be a birational transformation between two projective regular surfaces over a field $\kk$. The base points of $f$ is the set of points of the bubble space that are blown-up by $\eta$  in the minimal resolution of $f$. We denote this set by $\Base(f)$ and its cardinality by $\Bs(f)$.
\end{definition}

\begin{remark}\label{rem:basepoints}
	Let $S$ be a regular projective surface over $k$ and let $\pi\colon T\to S$ be the blow-up of the closed points $p_1,\dots, p_n$ of $S$. Then $\pi$ induces a bijection between $\B\B(T)$ and $\B\B(S)\setminus \{p_1,\dots, p_n\}$. If $f$ is a birational transformation from $S$ to the regular projective surface $S'$ and $T$ a minimal resolution as in Theorem~\ref{factorization}, we therefore obtain a bijection between $\B\B(S)\setminus \Base(f)$ and $\B\B(S')\setminus \Base(f^{-1})$, which we denote, by abuse of notation, also by $f$.
\end{remark}

\begin{definition}
	Let $X$ and $Y$ be varieties over a field $\kk$. A birational map $f\colon X \dashrightarrow Y $ is {\it an isomorphism in codimension $\l$} if the exceptional loci of $f$ and $f^{-1}$ have codimension $>\l$ in $X$ and $Y$ respectively.
\end{definition}

For a variety $X$ of dimension $d$ we denote the group of automorphisms in codimension $\l$ of $X$ by $\Psaut^\l(X )$. In particular, $\Psaut^0(X)=\Bir(X)$ and $\Psaut^{d}(X)=\aut(X)$.

An isomorphism in codimension $1$ is usually called a \emph{pseudo-isomorphism}, like for instance flips and flops between projective varieties. The group of pseudo-automorphism of a variety $X $ will sometimes be denoted by $\Psaut(X )$ instead of $\Psaut^1(X )$.

 We generalize the notion of regularization to groups of pseudo-automorphisms in codimension $\l$.

\begin{definition}
	Let $X $ be a variety over $\kk$. A subgroup $G\subset\Bir(X_k)$ is called \emph{pseudo-regularizable in codimension $\l$}, if there exists a variety $Y $ and a birational transformation $f\colon X \dashrightarrow Y $ such that $fGf^{-1}\subset \Psaut^\l(Y )$.
\end{definition}

In the definition of regularizability in \cite{cantat-cornulier} it is additionally requested that the variety $Y $ is quasi-projective. This looks a priori more restrictive, however, the next lemma shows that the definition from \cite{cantat-cornulier} is equivalent to our definition of regularizability: 

\begin{lemma}\label{benoistlemma}
	Let $X $ be a variety and $G\subset\aut(X )$ a subgroup. Then there exists a quasi-projective variety $Y $ and a birational transformation $f\colon X \dashrightarrow Y $ such that $fGf^{-1}\subset \aut(Y )$.
\end{lemma}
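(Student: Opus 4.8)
The plan is to reduce the statement to the construction of a single $G$-invariant dense open subvariety of $X$ that is quasi-projective, and then take for $f$ the inclusion of this subvariety. Concretely, suppose we have found a dense open $U\subseteq X$ that is quasi-projective and invariant under $G$, i.e.\ $g(U)=U$ for all $g\in G$. Then the inclusion $\iota\colon U\hookrightarrow X$ is birational, and because $U$ is $G$-invariant each $g\in G$ restricts to an automorphism of $U$; hence $\iota^{-1}G\iota\subseteq\aut(U)$, and we may take $Y=U$ and $f=\iota^{-1}$. So everything comes down to producing such a $U$.

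For the invariance I would use an orbit-union. Starting from any dense affine open $U_0\subseteq X$ (which exists since $X$ is a variety, and which is automatically quasi-projective, being closed in some affine space), the set $W:=\bigcup_{g\in G}g(U_0)$ is open, dense and $G$-invariant, and $G$ still acts on $W$ by automorphisms. Since normalization is canonical and therefore commutes with the $G$-action, and since solving the problem for a $G$-invariant open of the normalization of $X$ solves it for $X$, I may moreover assume $X$, and hence $W$, to be normal. The point of these reductions is that $W$ is now a normal $G$-invariant variety covered by the translates $g(U_0)$ of one fixed affine open.

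The remaining and genuinely hard step is to pass from $W$ to a $G$-invariant \emph{quasi-projective} dense open. Here I would invoke a quasi-projectivity criterion for normal varieties, namely that a normal variety is quasi-projective as soon as any two of its closed points lie in a common affine open subset (a form of the Kleiman--Chevalley criterion, due to Wlodarczyk), equivalently that it carries an ample line bundle. The strategy is then to delete from $W$ a suitable $G$-invariant closed subset so that this affine-compatibility condition holds on the complement, while keeping the complement dense. The main obstacle is exactly this step: quasi-projectivity is a global property, so it cannot be enforced simply by removing a pointwise-defined ``bad locus'', and, in the opposite direction, the orbit-union device that so cheaply produces $G$-invariant opens tends to destroy quasi-projectivity, since an infinite union of affine (or quasi-projective) opens need not be quasi-projective. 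Reconciling $G$-invariance with quasi-projectivity is therefore the crux, and is where the construction must be carried out with care—for instance by exhibiting a $G$-invariant open on which the two-points-in-a-common-affine condition can be checked directly, or by a Sumihiro--Benoist type equivariant argument.
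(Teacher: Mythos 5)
Your proposal contains a genuine gap: it correctly reduces the lemma to producing a dense open subvariety $U\subseteq X$ that is simultaneously quasi-projective and $G$-invariant, and it correctly diagnoses why the naive orbit-union $\bigcup_{g\in G}g(U_0)$ fails (unions destroy quasi-projectivity), but it then stops at exactly the point where the proof has to happen. ``Delete from $W$ a suitable $G$-invariant closed subset so that the affine-compatibility condition holds'' is not an argument: you give no construction of such a subset, no reason a dense quasi-projective complement survives, and no mechanism by which $G$-invariance and quasi-projectivity get reconciled. You yourself label this step the ``crux'' and defer it to a ``Sumihiro--Benoist type equivariant argument'' to be ``carried out with care.'' As written, this is an accurate analysis of the difficulty, not a proof of the statement.

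The missing idea, which is essentially the entire content of the paper's proof, is a finiteness theorem of Benoist \cite[Theorem~9]{Benoist_normal_varieties}: a normal variety contains only finitely many \emph{maximal} quasi-projective open subvarieties $U_1,\dots,U_n$ (and a finite intersection of quasi-projective opens is again quasi-projective). Granting this, the proof is immediate: after normalizing, as you do, set $Y=\bigcap_i U_i$; this is open, dense, and quasi-projective, and since every automorphism of $X$ must permute the canonical finite collection $\{U_1,\dots,U_n\}$, the intersection $Y$ is $\aut(X)$-invariant, in particular $G$-invariant, so $G$ restricts to a subgroup of $\aut(Y)$. Note how this resolves exactly the tension you identified, but by a move dual to yours: invariance comes from a finite \emph{canonical} collection via \emph{intersection} (which preserves quasi-projectivity), rather than from a $G$-orbit via \emph{union} (which destroys it). Without this finiteness input, or some equivariant quasi-projectivity theorem playing the same role, your outline cannot be completed.
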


\begin{proof}
	After possibly normalizing we may assume that $X$ is a normal variety.
	By \cite[Theorem~9]{Benoist_normal_varieties}, the variety contains finitely many maximal open quasi-projective subvarieties $U_1,\dots, U_n$. Let $Y$ be the intersection $\bigcap U_i$. In particular, $Y$ is quasi-projective and birationally equivalent to $X$. Since $\aut(X)$ permutes the varieties $U_1,\dots, U_n$, it preserves $Y$. The claim follows.
\end{proof}

\section{The blow-up complex}\label{section_blow-up_cc}

\subsection{Construction}
In her PhD-thesis \cite{lonjouthesis}, the first author constructed a $\CAT(0)$ cube complex $\Cb$ on which the plane Cremona group over an algebraically closed field acts faithfully by isometries. In this section we recall this construction and generalize it to arbitrary projective regular surfaces $S$ over any base field $\kk$, i.e., we construct a $\CAT(0)$ cube complex $\Cb(S)$ on which $\Bir(S)$ acts by isometries.

Given a projective regular surface $S $ over a field $\kk$, we consider pairs $(S' ,\varphi)$, where $S' $ is a projective regular surface over $\kk$ and $\varphi\colon S' \dashrightarrow S $ is a birational transformation. Such pairs are called \emph{marked surfaces}.
The vertices of the complex $\Cb(S )$ are equivalence classes of marked surfaces, where $(T ,\varphi)\sim (T ',\varphi')$ if the map $\varphi'^{-1}\varphi\colon T \to T '$ is an isomorphism. 

A set  $v_1,\dots, v_{2^n}$ of $2^n$ different vertices forms a $n$-cube, if there exist representatives $(S_j,\varphi_j)$ of $v_j$, an $r$, $1\leq r\leq 2^n$ and $n$ different closed points $p_1,\dots, p_n$ on $S_{r}$ such that for all $1\leq i\leq n$ and any blow-up $\pi\colon S'\to S_{r}$ of a subset of $i$ points of $\{p_1,\dots, p_n\}$ there is a $1\leq j\leq 2^n$ such that $(S',\varphi_r\pi)$ is equivalent to $(S_{j},\varphi_j)$. 
For instance, two different vertices $(T ,\varphi)$ and $(T' ,\varphi')$ are linked by an edge if $\varphi^{-1}\varphi'$ is the blow-up or the inverse of the blow-up of a single closed point. 

Recall that to a regular and projective surface $S$ over a field $\kk$, we can associate its Néron-Severi group, which is a finitely generated abelian group. Its rank is called the \emph{Picard rank} and denoted by $\rho(S )$. If we blow up a closed point the Picard rank increases by one. This allows us to define an orientation on $\Cb(S)$. Consider an edge with vertices $(T ,\varphi)$ and $(T' ,\varphi')$. We say that the edge is oriented from $(T, \varphi)$ towards $(T', \varphi')$ if $\rho(T)=\rho(T')+1$.

\begin{lemma}\label{simplyconnected}
Let $S $ be a projective regular surface over a field $\kk$. The cube complex $\Cb(S )$ is simply connected. 
\end{lemma}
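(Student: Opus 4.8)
The plan is to work entirely in the $1$-skeleton and show that $\Cb(S)$ is connected and that every edge-loop is a product of boundaries of squares. Partially order the vertices by declaring $v=(T_v,\varphi_v)\preceq w=(T_w,\varphi_w)$ whenever $\varphi_v^{-1}\varphi_w\colon T_w\to T_v$ is a birational morphism (equivalently, a composition of blow-ups, by Theorem~\ref{factorization}); then an edge of $\Cb(S)$ joins exactly a covering pair $v\prec w$ with $\rho(T_w)=\rho(T_v)+1$, and I call a path \emph{ascending} if it moves upward at every step. The first step is to record, using Theorem~\ref{factorization}, that any finite set of vertices $v_1,\dots,v_m$ has a common upper bound: resolving the finitely many markings $\varphi_{v_i}$ simultaneously produces a regular projective surface $T_U$ dominating each $T_{v_i}$ compatibly with the markings, hence a vertex $U=(T_U,\psi)$ with $v_i\preceq U$ for all $i$. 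In particular $\Cb(S)$ is connected, since any $(T,\varphi)$ is joined to the base vertex $(S,\id)$ through a common upper bound of the two.

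The heart of the argument is the following uniqueness statement, which I expect to be the main obstacle: \emph{if $v\preceq w$, then any two ascending paths from $v$ to $w$ are equivalent modulo boundaries of squares} (allowing insertion and deletion of backtracks). To prove it I would invoke Remark~\ref{remark_unique_factorisation}: the morphism $T_w\to T_v$ blows up a uniquely determined finite set $Q\subset\B\B(T_v)$ of bubble points, partially ordered by the infinitely-near relation. An ascending path from $v$ to $w$ is then precisely a choice of order in which to blow up the points of $Q$ respecting this order, i.e.\ a linear extension of $(Q,\leq)$. Any two linear extensions of a finite poset differ by a finite sequence of adjacent transpositions of \emph{incomparable} elements; and swapping the order in which two incomparable bubble points are blown up amounts, at the relevant stage, to blowing up two \emph{distinct} closed points of a surface in either order. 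By the very definition of the cubes of $\Cb(S)$, the four marked surfaces thereby obtained span a $2$-cube, so each such transposition replaces one pair of sides of a square by the complementary pair. Chaining these moves shows that any two ascending paths from $v$ to $w$ are square-equivalent. The delicate point here is the dictionary between linear extensions and squares, in particular the verification that incomparability in $(Q,\leq)$ really corresponds to disjoint blow-up centres on the current surface.

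Granting this, simple connectedness follows by telescoping. Let $\gamma=(v_0,v_1,\dots,v_n=v_0)$ be an edge-loop, let $U$ be a common upper bound of its vertices, and fix an ascending path $\alpha_i\colon v_i\to U$ for each $i$, with $\alpha_n=\alpha_0$. For an edge $v_i\,v_{i+1}$ of $\gamma$, say with $v_{i+1}\succ v_i$, the edge followed by $\alpha_{i+1}$ is again an ascending path $v_i\to U$, so by the key lemma it is square-equivalent to $\alpha_i$; hence the edge $v_i\,v_{i+1}$ is square-equivalent, rel endpoints, to $\alpha_i\cdot\alpha_{i+1}^{-1}$ (the case $v_i\succ v_{i+1}$ being symmetric). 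Replacing every edge of $\gamma$ in this way turns the loop into the telescoping product $\alpha_0\cdot\alpha_1^{-1}\cdot\alpha_1\cdot\alpha_2^{-1}\cdots\alpha_{n-1}\cdot\alpha_n^{-1}$, which collapses to the constant loop because $\alpha_n=\alpha_0$. Thus every loop is a product of boundaries of squares, and $\Cb(S)$ is simply connected. Throughout, the bookkeeping of markings when comparing $\alpha_i$ with the edge followed by $\alpha_{i+1}$ is exactly what the partial order $\preceq$ was set up to make automatic.
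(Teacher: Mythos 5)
Your proof is correct, but it takes a genuinely different route from the paper's. The paper argues directly on the loop: it fixes a dominating vertex $(W,\psi)$, picks a vertex of the loop of minimal Picard rank, notes that (after deleting backtracks) its two neighbours are obtained by blowing up two \emph{distinct} closed points $p,q$, and uses the square on $\{p,q\}$ to push the loop upwards; the process terminates because $\rho_{\min}(\gamma)$ is bounded above by $\rho(W)$. You instead prove a structural ``diamond'' lemma --- any two ascending paths between comparable vertices $v\preceq w$ are square-equivalent --- by identifying ascending paths with linear extensions of the poset of bubble points blown up by $T_w\to T_v$ (which is well defined by Remark~\ref{remark_unique_factorisation}) and invoking the standard fact that two linear extensions of a finite poset are connected by adjacent transpositions of incomparable elements; the telescoping argument then finishes. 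Both arguments ultimately rest on the same two ingredients: common upper bounds coming from Theorem~\ref{factorization}, and the squares arising from blowing up two distinct points in either order --- and your verification that incomparability in the bubble poset forces the two points to be distinct proper points of the intermediate surface is exactly the point that makes the dictionary work. The paper's proof is shorter and needs no poset combinatorics; yours costs more bookkeeping but yields a reusable statement (uniqueness of factorizations of a birational morphism up to commuting-blow-up swaps), which in effect describes the combinatorics of the intervals $[v,w]$ of the complex rather than just killing one loop at a time.
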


\begin{proof}
	We first need to prove that $\Cb(S )$ is connected. Consider two vertices represented by $(T ,\varphi)$ and $(T ',\varphi')$. The map $\varphi'^{-1}\circ \varphi$ is a birational map between projective regular surfaces, so by Theorem~ \ref{factorization} $\varphi'^{-1}\circ \varphi$ factors into a sequence of blow-ups followed by a sequence of contractions. This factorization yields a path from $(T ,\varphi)$ to $(T' ,\varphi')$.

	Let now $\gamma$ be a loop in $\Cb(S )$. We can deform $\gamma$ by a homotopy in such a way that it is contained in the 1-skeleton of the cube complex. Since $\gamma$ is compact, it passes only through finitely many vertices. Let us denote them by $v_1,\dots, v_n$ and choose representatives $(S_{1},\varphi_1),\dots, (S_{n}, \varphi_n)$ of these vertices. 
	We define $\rho_{\text{min}}(\gamma)\coloneqq\min_{1\leq i\leq n}(\rho(S_{i}))$, where $\rho(S_i)$ is the Picard rank of $S_i$. Let $(W,\psi)$ be a vertex that dominates  $(S_{1},\varphi_1),\dots, (S_{n}, \varphi_n)$, i.e., $\varphi_i^{-1}\psi$ is a surjective birational morphism for $1\leq i\leq n$.

 If there exists an $i$ modulo $n$ such that $v_i= v_{i+2}$ then we replace the subpath passing through $v_i,v_{i+1},v_{i+2}$ by the constant path $v_i$ and $n$ decreases by $2$.
 Otherwise, let $1\leq i_0\leq n$ be such that $\rho(S_{i_0})=\rho_{\text{min}}(\gamma)$. This implies that $\rho(S_{i_0-1})=\rho(S_{i_0+1})=\rho(S_{i_0})+1$ and hence that there exist two distinct closed points $p$ and $q$ on $S_{i_0}$ such that $S_{i_0+1}$ is obtained by blowing up $p$ and $S_{i_0-1}$ by blowing up $q$. Let $\pi\colon S'_{i_0}\to S_{i_0}$ be the blow-up of $p$ and $q$ and let $v_{i_0}'$ be the vertex given by $(S_{i_0}', \varphi_{i_0}\pi)$. Since $v_{i_0}'$, $v_{i_0+1}$, $v_{i_0}$ and $v_{i_0-1}$ form a square, we can deform $\gamma$ by a homotopy such that it passes through $v_{i_0}'$ instead of $v_{i_0}$. 
We have now either that $\rho_{\text{min}}(\gamma)$ increases or that the number of $i$ such that $\rho(S_{i})=\rho_{\text{min}}(\gamma)$ decreases. We observe that the {vertex $(W,\psi)$} dominates also  ${(S_{i_0}', \varphi_{i_0}\pi)}$. We now repeat these steps until we obtain the constant path.
This process terminates because $\rho_{\text{min}}(\gamma)$ is bounded above by $\rho(W)$.
\end{proof}

\begin{lemma}\label{flag}
Let $S $ be a projective regular surface over a field $\kk$. The links of the vertices of the cube complex $\Cb(S )$ are flag.
\end{lemma}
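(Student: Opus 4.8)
The plan is to fix a vertex $v=(T,\varphi)$ and to classify its neighbours and the squares containing it directly in terms of blow-ups and contractions of $T$; once this dictionary is set up, flagness becomes a matter of reassembling a configuration of points on a common base surface.

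First I would observe that every edge changes the Picard rank by exactly one, so the neighbours of $v$ fall into two families. The \emph{up-neighbours} are the classes $u_p \coloneqq (Bl_p T,\varphi\circ\pi_p)$, one for each closed point $p\in T$, where $\pi_p\colon Bl_pT\to T$ is the blow-up; these have Picard rank $\rho(T)+1$. The \emph{down-neighbours} have rank $\rho(T)-1$ and, by the definition of the edges, correspond to realizing $T$ as the blow-up of a regular projective surface along a point, i.e. to contracting a single $(-1)$-curve of $T$. By Castelnuovo's contractibility criterion these are indexed by the $(-1)$-curves $E\subset T$; write $d_E$ for the corresponding vertex.

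Next I would analyse the $2$-cubes through $v$. A square has a unique vertex of lowest Picard rank (the \emph{bottom} $b$) and one of highest rank (the \emph{top} $t$, with $\rho(t)=\rho(b)+2$), together with two \emph{middle} vertices of rank $\rho(b)+1$. Since the two middle vertices differ by two blow-ups they are not adjacent, so the two edges of the square meeting a given corner are: two up-edges at $b$, two down-edges at $t$, and one edge of each type at a middle vertex. Reading this off yields the adjacency criterion in $\Lk(v)$ for each pair of neighbours: (i) two up-neighbours $u_p,u_{p'}$ are always adjacent, the corresponding square having $v$ as its bottom and $Bl_{p,p'}T$ as its top; (ii) two down-neighbours $d_E,d_{E'}$ are adjacent iff $E\cap E'=\varnothing$, so that $v$ sits at the top of the square obtained by contracting the disjoint curves $E,E'$; and (iii) an up-neighbour $u_p$ and a down-neighbour $d_E$ are adjacent iff $p\notin E$, in which case $v$ is a middle vertex, with $d_E$ the bottom and $u_p$ the top. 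Verifying these equivalences — in particular that the forced position of $v$ inside the square dictates exactly these geometric conditions — is the technical heart of the argument.

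Finally, to establish flagness, I would take pairwise-adjacent neighbours $u_{p_1},\dots,u_{p_s},d_{E_1},\dots,d_{E_t}$ of $v$ and build a single $n$-cube ($n=s+t$) containing $v$ together with all of them. By (ii) the curves $E_1,\dots,E_t$ are pairwise disjoint $(-1)$-curves, so they can be contracted simultaneously to a regular projective surface $W$, exhibiting $T$ as the blow-up of $W$ at the $t$ distinct closed points $q_1,\dots,q_t$ below the $E_i$. By (iii) each $p_k$ avoids every $E_i$, so the contraction $T\to W$ is a local isomorphism near $p_k$ and sends $p_1,\dots,p_s$ to closed points $\bar p_1,\dots,\bar p_s$ of $W$ that are pairwise distinct and distinct from $q_1,\dots,q_t$. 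Thus $W$ carries the $n$ distinct closed points $q_1,\dots,q_t,\bar p_1,\dots,\bar p_s$, and blowing up all of their subsets produces an $n$-cube in $\Cb(S)$ in which $v$ is the vertex obtained by blowing up $\{q_1,\dots,q_t\}$; its neighbours inside this cube are precisely $d_{E_1},\dots,d_{E_t}$ (removing a $q_i$) and $u_{p_1},\dots,u_{p_s}$ (adding a $\bar p_k$). Hence $\{v,u_{p_1},\dots,u_{p_s},d_{E_1},\dots,d_{E_t}\}$ lies in an $n$-cube and the given vertices span an $(n-1)$-simplex, so the link is flag. The main obstacle is the mixed case (iii): one must check both that pairwise adjacency forces each blow-up centre to lie off every contracted curve, and, conversely, that this condition is exactly what allows all the points to descend to genuinely distinct (non-infinitely-near) closed points of the common base $W$.
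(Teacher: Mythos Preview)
Your proof is correct and follows essentially the same route as the paper's: both split the neighbours of $v$ into up-neighbours (blow-ups of closed points $p_i$) and down-neighbours (contractions of exceptional curves $E_j$), read off from the shape of the squares that pairwise adjacency forces the $E_j$ to be pairwise disjoint and each $p_i$ to lie off every $E_j$, and then assemble the whole configuration into a single cube; you spell out the construction of the common base $W$ and the distinctness of the descended points more explicitly than the paper does. One small caveat: over a non-algebraically-closed field the exceptional divisor of the blow-up of a closed point need not be a $(-1)$-curve in the classical sense, so your appeal to Castelnuovo is a slight terminological imprecision, but this does not affect the argument since all you use is that each down-edge corresponds to an irreducible exceptional curve on $T$.
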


\begin{proof}
	Let $v$ be a vertex of $\Cb(S )$. The vertices of the link $\Lk(v)$ can be identified with the set of edges $\{v, w\}$ in $\Cb(S )$. Let $\{v, w_1\},\dots, \{v, w_n\}$ be a set of pairwise adjacent vertices in $\Lk(v)$, i.e., the vertices $v, w_i, w_j$ belong to a square in $\Cb(S )$. We want to show that $\{v, w_1\},\dots, \{v, w_n\}$ span a simplex in $\Lk(v)$, i.e., that the vertices $v, w_1,\dots, w_n$ belong to a cube.

	The Picard rank of the $w_i$ differs from the Picard rank of $v$ by exactly one. We denote by $m\in\{0,\dots, n\}$ the index such that, up to relabeling the vertices, $\rho(w_i)=\rho(v)+1$ for $ i<m$, and $\rho(w_j)=\rho(v)-1$ for $m\leq j$.
	
	Let $T$ be the surface corresponding to $v$ and for $1\leq i \leq n$ let $S_{i}$ be the surface corresponding to the vertex $w_i$. For $m<j\leq n$, the edges $\{v, w_j\}$ correspond to the contraction of some curves $E_j$ on $T$. Since the $\{v,w_j\}$ are pairwise adjacent in $\Lk(v)$, it follows that the $E_j$ are pairwise disjoint. Similarly, for $1\leq i\leq m$, the edges $\{v,w_i\}$ correspond to the blow-up of different closed points $p_i$ on $T $. Moreover, using again that the $\{v, w_i\}$ are pairwise adjacent in $\Lk(v)$, none of the points $p_i$ for $1\leq i\leq m$ can lie on one of the curves $E_j$ for $m<j\leq n$. This implies that there exists a cube containing $\{v, w_1,\dots, w_n\}$.
\end{proof}

\begin{proof}[Proof of Theorem~\ref{thm:Cbcat0}]
	This follows from Lemma \ref{simplyconnected} and Lemma \ref{flag}.
\end{proof}

\subsection{Hyperplanes}
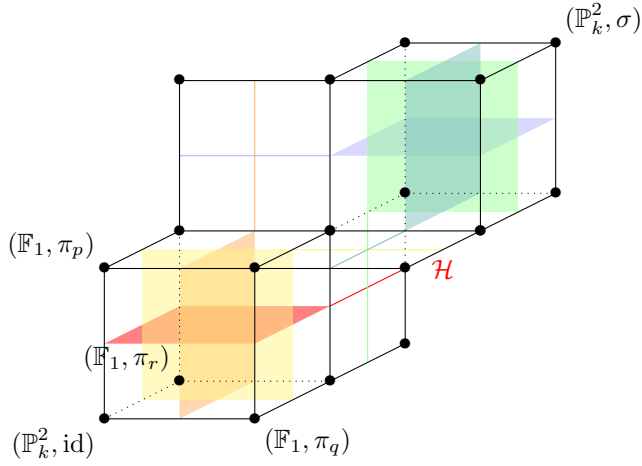
\begin{figure}
	\begin{tikzpicture}
	\fill[opacity=0.5, color=red] (0,1)--(2,1)--(3,1.5)--(1,1.5);
	\draw[color=red] (3,1.5)--(4,2);
	\fill[opacity=0.5, color=orange!60] (1,2)--(2,2.5)--(2,0.5)--(1,0);
	\draw[color=orange!60] (2,2.5)--(2,4.5);
	\fill[opacity=0.5, color=yellow!60] (0.5,2.25)--(2.5,2.25)--(2.5,0.25)--(0.5,0.25);
	\draw[color=yellow!60] (2.5,2.25)--(4.5,2.25);
	\fill[opacity=0.5, color=blue!30] (3,3.5)--(5,3.5)--(6,4)--(4,4);
	\draw[color=blue!30] (3,3.5)--(1,3.5);
	\fill[opacity=0.5, color=green!40] (3.5,4.75)--(5.5,4.75)--(5.5,2.75)--(3.5,2.75);
	\draw[color=green!40] (3.5,2.75)--(3.5,0.75);	
	\fill[opacity=0.5, color=teal!50] (4,4.5)--(5,5)--(5,3)--(4,2.5);
	\draw[color=teal!50] (4,2.5)--(3,2);	
	\draw (0,0) -- (2,0) -- (2,2) -- (0,2)-- (0,0);
	\draw (3,0.5) -- (3,2.5) -- (1,2.5);
	\draw[dotted] (1,2.5)--(1,0.5) -- (3,0.5);
	\draw (2,0) -- (3,0.5); 
	\draw (2,2) -- (3,2.5);
	\draw (0,2) -- (1,2.5);
	\draw[dotted] (0,0) -- (1,0.5);
	\draw (0,0) node {$\bullet$} node[below left] {$(\P_\kk^2,\id)$};
	\draw (2,0) node {$\bullet$} node[below right] {$(\F_1,\pi_q)$} ;
	\draw(2,2) node {$\bullet$} ;
	\draw (0,2) node {$\bullet$} node[above left] {$(\F_1,\pi_p)$};
	\draw (1,0.5) node {$\bullet$} node[ above left] {$(\F_1,\pi_r)$};
	\draw (3,0.5) node {$\bullet$} ;
	\draw(3,2.5) node {$\bullet$} ;
	\draw (1,2.5) node {$\bullet$} ;
	\draw (3,0.5) -- (4,1)--(4,2);
	\draw[dotted] (4,2)--(4,3);
	\draw(5,2.5) node {$\bullet$} node[above right] {} ;
	\draw (4,1) node {$\bullet$} ;
	\draw(4.5,2) node {$\textcolor{red}{\mathcal{H}}$} ;
	\draw (3,2.5) -- (5,2.5) -- (5,4.5) -- (3,4.5)-- (3,2.5);
	\draw (6,3) -- (6,5) -- (4,5);
	\draw[dotted] (4,5)--(4,3) -- (6,3);
	\draw (5,2.5) -- (6,3); 
	\draw (5,4.5) -- (6,5);
	\draw (3,4.5) -- (4,5);
	\draw[dotted] (3,2.5) -- (4,3);
	\draw (3,2.5) node {$\bullet$} node[below left] {};
	\draw (5,2.5) node {$\bullet$} ;
	\draw(5,4.5) node {$\bullet$} ;
	\draw (3,4.5) node {$\bullet$} node[above left] {};
	\draw (4,3) node {$\bullet$} ;
	\draw (6,3) node {$\bullet$} ;
	\draw(6,5) node {$\bullet$} node[above right] {$(\P_\kk^2,\sigma)$} ;
	\draw (4,5) node {$\bullet$} ;
	\draw (1,2.5) -- (1,4.5)-- (3,4.5);
	\draw (2,2) -- (4,2)-- (5,2.5);
	\draw (4,2) node {$\bullet$} ;
	\draw (1,4.5) node {$\bullet$} ;
	\end{tikzpicture}
	\caption{A subcomplex of $\Cb(\P_\kk^2)$ containing the vertices $(\P_\kk^2,\id)$ and $(\P_\kk^2,\sigma)$, where $\sigma$ is the standard quadratic involution.\label{Figure_blowup_complex}} 
\end{figure}
Let $S $ be a projective regular surface over a field $\kk$. Recall that in the complex $\Cb(S )$ an edge is given by the blow-up of a regular point $p$ belonging to a marked surface $(W ,\varphi)$. We denote this edge by $(W ,\varphi,p)$ and by $[(W ,\varphi,p)]$ its equivalence class as well as the hyperplane dual to it.  

\begin{lemma}\label{lemme:equivalence_edges-surface}
	Two edges $(W ,\varphi,p)$ and $(W',\varphi',q)$ correspond to the same hyperplane if and only if $\varphi'^{-1}\varphi$ induces a local isomorphism between a neighborhood of $p$ and a neighborhood of $q$ and $\varphi'^{-1}\varphi(p)=q$.
\end{lemma}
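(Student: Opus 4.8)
The plan is to exploit that, by definition, the hyperplane containing an edge is its equivalence class under the relation generated by ``being opposite edges of a $2$-cube''. Hence it suffices to understand a single square and then chain the resulting conditions. Throughout I write $\psi\coloneqq\varphi'^{-1}\varphi\colon W\dashrightarrow W'$ for the transition map, so that the assertion to be proved is that $(W,\varphi,p)$ and $(W',\varphi',q)$ lie in the same class if and only if $\psi$ is a local isomorphism near $p$ with $\psi(p)=q$.

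For the direction ``same hyperplane $\Rightarrow$ local isomorphism'', I would first describe the edges of a square. A $2$-cube in $\Cb(S)$ is determined by a marked surface $(R,\chi)$ and two distinct closed points $a,b\in R$, its four vertices being $(R,\chi)$, $(Bl_aR,\chi\pi_a)$, $(Bl_bR,\chi\pi_b)$ and $(Bl_{a,b}R,\chi\pi_{ab})$. Inside this square the edge $(R,\chi,a)$ is opposite to the edge $(Bl_bR,\chi\pi_b,\tilde a)$, where $\tilde a$ is the unique lift of $a$, which exists precisely because $a\neq b$. The transition map between these two marked surfaces is exactly $\pi_b^{-1}$, the inverse of the blow-up of $b$, which is a local isomorphism away from $b$; as $a\neq b$ it is a local isomorphism near $a$ sending $a$ to $\tilde a$, and going the other way gives the blow-up morphism $\pi_b$, again a local isomorphism near $\tilde a$. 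Thus every single opposite-edge move replaces the tracked point by its image under a local isomorphism. Chaining a finite sequence of such moves, the transition maps telescope, $\varphi'^{-1}\varphi$ is their composite, and it is therefore a local isomorphism near $p$ sending $p$ to $q$.

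For the converse I would pass to a common resolution. Take the minimal resolution of $\psi$ from Theorem~\ref{factorization}, giving a projective regular surface $Z$ with morphisms $\eta\colon Z\to W$ and $\rho\colon Z\to W'$, each a composition of blow-ups of closed points and satisfying $\psi=\rho\eta^{-1}$. Since $\psi$ is a local isomorphism near $p$, the point $p$ is not a base point of $\psi$, so $\eta$ performs no blow-up over $p$: there is a unique $\hat p\in Z$ with $\eta$ a local isomorphism near $\hat p$ and $\eta(\hat p)=p$, and then $\rho=\psi\eta$ is a local isomorphism near $\hat p$ with $\rho(\hat p)=q$. Now factor $\eta$ as $Z=Z_n\to\cdots\to Z_0=W$ into single blow-ups, let $\eta_i\colon Z_i\to W$ be the induced morphism, and track $p$ by its lifts $\hat p_i\in Z_i$. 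At each stage the blown-up center is distinct from $\hat p_{i-1}$ (otherwise $\eta$ would not be a local isomorphism near $\hat p$), so the consecutive edges $(Z_{i-1},\varphi\eta_{i-1},\hat p_{i-1})$ and $(Z_i,\varphi\eta_i,\hat p_i)$ are the opposite edges of the square given by the two distinct points $\hat p_{i-1}$ and that center. Hence $(W,\varphi,p)$ is hyperplane-equivalent to $(Z,\varphi\eta,\hat p)$. Applying the same to the factorization of $\rho$ and using $\varphi'\rho=\varphi\eta$ (which follows from $\varphi'=\varphi\eta\rho^{-1}$), the edge $(Z,\varphi\eta,\hat p)=(Z,\varphi'\rho,\hat p)$ is hyperplane-equivalent to $(W',\varphi',q)$, and composing the two chains yields $(W,\varphi,p)\sim(W',\varphi',q)$.

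The main obstacle, and the only place where the local-isomorphism hypothesis is genuinely needed, is the claim in the converse that $p$ lifts isomorphically through the resolution, i.e.\ that neither $p$ nor any point infinitely near it is blown up by $\eta$. This is where I would argue most carefully: it rests on the fact that the centers of the minimal resolution are exactly the base points of $\psi$ (Remark~\ref{remark_unique_factorisation}) together with the observation that $\psi$ has no base points over the open locus on which it is an isomorphism. Once this is established, the remaining work—identifying opposite edges of squares, checking the telescoping of transition maps, and the bookkeeping of the markings—is routine.
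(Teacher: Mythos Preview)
Your proof is correct and follows essentially the same strategy as the paper's. For the implication ``local isomorphism $\Rightarrow$ same hyperplane'' both arguments take the minimal resolution of $\varphi'^{-1}\varphi$ and track $p$ along the resulting path of blow-ups and blow-downs; for the other implication, your explicit square-by-square analysis (showing each opposite-edge move is a blow-up or blow-down at a point distinct from the tracked one) is simply a more unpacked version of the paper's terser remark that there is a path from $[(W,\varphi)]$ to $[(W',\varphi')]$ avoiding the hyperplane, and is arguably cleaner since it appeals only to the definition of the hyperplane as an equivalence class of edges rather than to half-space properties established later.
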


For instance in the example of Figure~\ref{Figure_blowup_complex}, the hyperplane $\mathcal{H}$ corresponds to the hyperplane  $[(\P_\kk^2,\id,p)]=[(\F_1,\pi_q,\pi_q^{-1}(p))]$, {where $\F_1$ denotes the first Hirzebruch surface}. 

\begin{proof}
	Consider the minimal resolution of $\varphi'^{-1}\varphi$. Fixing an order in the blow-ups and the blow-downs gives a path $\gamma$ between the vertices $[(W ,\varphi)]$ and $[(W',\varphi')]$. If $\varphi'^{-1}\varphi$ is a in a neighborhood of $p$, the blow-ups of $p$ in the surfaces corresponding to the different vertices of $\gamma$ give a sequence of equivalent edges. Moreover, because  $\varphi'^{-1}\varphi(p)=q$, the last one corresponds to blow up the point $q$ in $W'$.
	
	Conversely, if $(W ,\varphi,p)$ and $(W',\varphi',q)$ correspond to the same hyperplane, there exists a path $\gamma$ from  $[(W ,\varphi)]$ to $[(W',\varphi')]$ that does not cross the hyperplane $[(W ,\varphi,p)]$. This shows that $\varphi'^{-1}\varphi$ induces a local isomorphism between a neighborhood of $p$ and a neighborhood of $q$.
\end{proof}

\begin{lemma}\label{lemma:comb_geodesic}
	Consider two vertices $(T ,\varphi)$ and $(T' ,\varphi')$ of $\Cb(S )$. The combinatorial distance between these two vertices is equal to:
	\[ \dist \big((T ,\varphi),(T' ,\varphi') \big)= \B(\varphi'^{-1}\varphi)+ \B(\varphi^{-1}\varphi').\] 
 Moreover, every geodesic path joining these two vertices crosses exactly all the hyperplanes corresponding to the base points of $\varphi'^{-1}\varphi$ and to the blow-down of the exceptional divisors corresponding to the base points of $\varphi^{-1}\varphi'$.
\end{lemma}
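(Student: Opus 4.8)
The plan is to apply Theorem~\ref{theorem:combinatorial_geodesic}, which reduces both assertions to understanding the hyperplanes that separate $(T,\varphi)$ and $(T',\varphi')$: the distance is their number, and every geodesic crosses each of them exactly once and crosses no other hyperplane. I would exhibit one explicit path realizing the stated length and show that it crosses each hyperplane at most once; by Theorem~\ref{theorem:combinatorial_geodesic} it is then automatically a geodesic, the set of hyperplanes it crosses is exactly the set of separating ones, and the ``moreover'' part follows. To build the path, apply Theorem~\ref{factorization} and Remark~\ref{remark_unique_factorisation} to $\varphi'^{-1}\varphi\colon T\dashrightarrow T'$ to obtain its minimal resolution $\eta\colon Z\to T$, $\rho\colon Z\to T'$ with $\varphi'^{-1}\varphi=\rho\eta^{-1}$, and note that $\varphi\eta=\varphi'\rho=:\psi$. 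Writing $\eta$ as a sequence of $\Bs(\varphi'^{-1}\varphi)$ blow-ups and $\rho$ as a sequence of $\Bs(\varphi^{-1}\varphi')$ blow-ups yields a path that goes up from $(T,\varphi)$ to $(Z,\psi)$ by successively blowing up the base points of $\varphi'^{-1}\varphi$, and then down from $(Z,\psi)$ to $(T',\varphi')$ by successively contracting the exceptional divisors of $\rho$. Its length is $\Bs(\varphi'^{-1}\varphi)+\Bs(\varphi^{-1}\varphi')$.

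Next I would identify the hyperplanes crossed by this path using Lemma~\ref{lemme:equivalence_edges-surface}. Each point of $\B\B(T)$ determines, via the marking, a hyperplane, and Lemma~\ref{lemme:equivalence_edges-surface} shows that the assignment $p\mapsto[(T,\varphi,p)]$, extended to the bubble space in the obvious way, is injective. The ascending part of the path crosses precisely the hyperplanes attached in this way to the base points of $\varphi'^{-1}\varphi$: after blowing up the first few base points, the hyperplane dual to the blow-up of the next base point $p$ coincides, by Lemma~\ref{lemme:equivalence_edges-surface} and Remark~\ref{rem:basepoints}, with the hyperplane attached to $p$ seen in $\B\B(T)$. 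Symmetrically, the descending part crosses precisely the hyperplanes attached, via the marking $\varphi'$, to the base points of $\varphi^{-1}\varphi'$ in $\B\B(T')$, i.e.\ the blow-downs of the exceptional divisors of $\rho$.

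It remains to check that these $\Bs(\varphi'^{-1}\varphi)+\Bs(\varphi^{-1}\varphi')$ hyperplanes are pairwise distinct. Distinctness within each of the two families is the injectivity just noted. The crux is that the two families are disjoint, and this is exactly where the minimality of the resolution enters, packaged in Remark~\ref{rem:basepoints}. If a hyperplane attached to a base point $p\in\Base(\varphi'^{-1}\varphi)$ equalled a hyperplane attached to a base point $q\in\Base(\varphi^{-1}\varphi')$, then by Lemma~\ref{lemme:equivalence_edges-surface} the map $\varphi'^{-1}\varphi$ would induce a local isomorphism near $p$ sending $p$ to $q$; but by Remark~\ref{rem:basepoints} this bijection is defined exactly on $\B\B(T)\setminus\Base(\varphi'^{-1}\varphi)$, so $p\notin\Base(\varphi'^{-1}\varphi)$, a contradiction. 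Hence the path crosses $\Bs(\varphi'^{-1}\varphi)+\Bs(\varphi^{-1}\varphi')$ distinct hyperplanes with no repetition, so by Theorem~\ref{theorem:combinatorial_geodesic} it is a geodesic and the distance formula holds. Since every geodesic crosses exactly the separating hyperplanes, each once, the set of crossed hyperplanes is independent of the chosen geodesic and equals the one just described, which gives the final assertion. The main obstacle is precisely the disjointness of the two families; once Remark~\ref{rem:basepoints} is invoked it is short, the rest being bookkeeping with Lemma~\ref{lemme:equivalence_edges-surface}.
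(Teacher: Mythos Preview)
Your proposal is correct and follows the same approach as the paper: build a path from the minimal resolution of $\varphi'^{-1}\varphi$ and invoke Theorem~\ref{theorem:combinatorial_geodesic} together with the minimality to conclude that no hyperplane is crossed twice. The paper's proof is terser, simply asserting that minimality ensures the path is geodesic, whereas you have spelled out via Lemma~\ref{lemme:equivalence_edges-surface} and Remark~\ref{rem:basepoints} exactly why the ascending and descending families of hyperplanes are internally distinct and mutually disjoint.
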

For instance, in Figure \ref{Figure_blowup_complex}, we see all the possible geodesics joining the vertices $[(\P_\kk^2,\id)]$ and $[(\P_\kk^2,\sigma)]$.

\begin{proof}
	Choosing an order of blowing up the base points and blowing down the contracted curves in the minimal resolution of $\varphi'^{-1}\varphi$ gives us a path joining the vertices $(T ,\varphi)$ and $(T' ,\varphi')$. By Theorem~\ref{theorem:combinatorial_geodesic} and because the resolution is minimal, this path is geodesic. Moreover, any other geodesic segment joining these two vertices has to cross the same hyperplanes.
\end{proof}

{\begin{remark}\label{rmk_pt-base-inverse}
	Note that by comparing the Picard ranks of the surfaces that appear in Theorem \ref{factorization}, one can see that $\B(f)=\B(f^{-1})$. 
	\end{remark}}

\begin{proposition}\label{prop:halfspace}
	Consider a hyperplane $[(W ,\varphi,p)]$ in $\Cb(S )$. The set of vertices $(T ,\varphi_1)$ such that $p$ is a base point of $\varphi_1^{-1}\varphi$ determines the half-space $[(W ,\varphi,p)]^+$. This characterization of half-spaces does not depend on the representative $(W ,\varphi,p)$.
\end{proposition}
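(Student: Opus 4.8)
The plan is to make the two sides of the hyperplane $[e]\coloneqq[(W,\varphi,p)]$ completely explicit and then to translate membership in $[e]^+$ into a statement about base points using the geodesic description of Lemma~\ref{lemma:comb_geodesic}. First I would fix the endpoints of the edge $e$: it joins $(W,\varphi)$ to $(Bl_pW,\varphi\pi)$, where $\pi\colon Bl_pW\to W$ is the blow-up of $p$. Since $\rho(Bl_pW)=\rho(W)+1$, the orientation on $\Cb(S)$ points from $(Bl_pW,\varphi\pi)$ towards $(W,\varphi)$, so by our convention $(W,\varphi)\in[e]^-$ while $(Bl_pW,\varphi\pi)\in[e]^+$. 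Hence $(T,\varphi_1)\in[e]^+$ if and only if $[e]$ separates $(T,\varphi_1)$ from $(W,\varphi)$, which by Theorem~\ref{theorem:combinatorial_geodesic} happens exactly when a (hence every) geodesic from $(W,\varphi)$ to $(T,\varphi_1)$ crosses $[e]$. I would also record at the outset that, once the identity $[e]^+=\{(T,\varphi_1)\mid p\in\Base(\varphi_1^{-1}\varphi)\}$ is established for the chosen representative, independence of the representative is automatic, since $[e]^+$ depends only on the oriented hyperplane and so any two representatives must yield the same set.

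Next I would describe the geodesic concretely. Factor $\varphi_1^{-1}\varphi\colon W\dashrightarrow T$ through its minimal resolution $Z$ (Theorem~\ref{factorization}), with $\eta\colon Z\to W$ the blow-up of $\Base(\varphi_1^{-1}\varphi)$ and $\theta\colon Z\to T$ the blow-up of $\Base(\varphi^{-1}\varphi_1)$; this yields a geodesic that first goes up from $(W,\varphi)$ to the common resolution $(Z,\varphi\eta)$ by successively blowing up the base points of $\varphi_1^{-1}\varphi$, and then down to $(T,\varphi_1)$ by contracting curves. The decisive orientation observation is that the Picard rank strictly increases along the up-part and strictly decreases along the down-part, so crossing $[e]$ while going up passes from $[e]^-$ to $[e]^+$ and crossing it while going down passes from $[e]^+$ to $[e]^-$. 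As the geodesic starts in $[e]^-$ and crosses $[e]$ at most once, it can land in $[e]^+$ only by crossing $[e]$ during the up-part; therefore $(T,\varphi_1)\in[e]^+$ if and only if $[e]$ is one of the hyperplanes crossed while blowing up the base points of $\varphi_1^{-1}\varphi$.

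Finally I would match this crossing with the condition $p\in\Base(\varphi_1^{-1}\varphi)$. The hyperplanes crossed during the up-part are precisely those dual to the successive blow-ups of the points of $\Base(\varphi_1^{-1}\varphi)\subset\B\B(W)$. A base point lying on $W$ itself contributes the hyperplane $[(W,\varphi,p')]$, and by Lemma~\ref{lemme:equivalence_edges-surface} this equals $[e]=[(W,\varphi,p)]$ if and only if $p'=p$. The one genuine point to check is that no infinitely near base point can contribute $[e]$: if $p'$ lies on a proper blow-up $\pi_0\colon W'\to W$, the corresponding edge is $(W',\varphi\pi_0,p')$, and Lemma~\ref{lemme:equivalence_edges-surface} would force $(\varphi\pi_0)^{-1}\varphi=\pi_0^{-1}$ to be a local isomorphism near $p$ sending $p$ to $p'$; but $\pi_0^{-1}$ is an isomorphism away from the blown-up centres, so $\pi_0^{-1}(p)$ lies on no exceptional divisor, whereas $p'$ does, a contradiction. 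Thus $[e]$ is crossed during the up-part exactly when $p$ is a (necessarily level-$0$) base point of $\varphi_1^{-1}\varphi$, giving $[e]^+=\{(T,\varphi_1)\mid p\in\Base(\varphi_1^{-1}\varphi)\}$. I expect this last step, excluding that $[e]$ arises from an infinitely near base point (and, dually via the orientation, from the contracting part of the geodesic), to be the only real obstacle; the orientation bookkeeping reduces it to the clean local statement handled by Lemma~\ref{lemme:equivalence_edges-surface}.
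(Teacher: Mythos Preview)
Your argument is correct and complete; the orientation bookkeeping in the middle paragraph is the genuinely new ingredient and it works as you claim. It is, however, organized differently from the paper's proof, so a brief comparison is worthwhile.

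The paper does not anchor the computation at the vertex $(W,\varphi)$. Instead it lets $E$ be the set of vertices $(T,\varphi_1)$ with $p\in\Base(\varphi_1^{-1}\varphi)$ and shows directly that every geodesic from a vertex in $E$ to a vertex in $E^C$ crosses the hyperplane. Concretely, for $(T,\varphi_1)\in E$ and $(T',\varphi_2)\in E^C$ it distinguishes two cases according to whether $\varphi_2^{-1}\varphi$ is a local isomorphism near $p$; in the first case $\varphi_2^{-1}\varphi(p)$ is a base point of $\varphi_1^{-1}\varphi_2$ and Lemma~\ref{lemma:comb_geodesic} finishes, and in the second case one passes through an auxiliary vertex $(\tilde{T},\varphi_3)$ dominating $(T',\varphi_2)$ on which $p$ becomes visible and reduces to the first case. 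The independence of the representative is argued first, as a separate step via Lemma~\ref{lemme:equivalence_edges-surface}.

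Your route trades this two-case separation argument for a single orientation argument: you only ever look at the geodesic from $(W,\varphi)$ to the test vertex, and the Picard-rank orientation forces any crossing of $[e]$ to occur in the ascending part, which immediately identifies it with a base point of $\varphi_1^{-1}\varphi$. This is slightly more economical and avoids introducing the auxiliary $(\tilde{T},\varphi_3)$; the price is that you must check (as you do, via Lemma~\ref{lemme:equivalence_edges-surface}) that no infinitely near base point can produce the same hyperplane as $p$. Your treatment of independence of the representative---deducing it a posteriori from the equality $[e]^+=\{(T,\varphi_1)\mid p\in\Base(\varphi_1^{-1}\varphi)\}$---is also legitimate, though the paper prefers to establish it at the outset.
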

For instance, let $f\in\Bir(S )$ and $p\in S $ be a base point of $f$. Then $(S ,\id)\in[(S ,\id,p)]^-$ and $(S ,f^{-1})~\in~[(S ,\id,p)]^+$ (see Figure \ref{Figure_blowup_complex} for an illustration in the case where $f=\sigma$ and $W=S=\P_\kk^2$).

\begin{proof}
	 By Lemma \ref{lemme:equivalence_edges-surface} any representative $(W ',\varphi',p')$ of $[(W ,\varphi,p)]$ has the property that $\varphi^{-1}\varphi'$ is a local isomorphism from a neighborhood of $p'$ into a neighborhood of $p$ with $p=\varphi^{-1}\varphi'(p')$ so $p$ is a base point of $\varphi_1^{-1}\varphi$ if and only if $p'$ is a base point of $\varphi_1^{-1}\varphi'$. This shows that our characterization is independent of the choice of the representative of the class $[(W ,\varphi,p)]$.

	We denote by $E$ the set of vertices $(T ,\varphi_1)$ such that $p$ is a base point of $\varphi_1^{-1}\varphi$ and by $E^C$ its complement. It is enough to prove that any geodesic joining one vertex in $E$ to a vertex in $E^C$ crosses the hyperplane $[(W ,\varphi,p)]$. Let $(T ,\varphi_1)\in E$ and $(T' ,\varphi_2)\in E^C$. By definition, $p$ is not a base point of $\varphi_2^{-1}\varphi$. Assume first that $\varphi_2^{-1}\varphi$ is a local isomorphism in a neighborhood of $p$. Then, because $p$ is a base point of $\varphi_1^{-1}\varphi$, we obtain that $\varphi_2^{-1}\varphi(p)$ is a base point of $\varphi_1^{-1}\varphi_2$. By Lemma \ref{lemma:comb_geodesic}, we conclude that any geodesic joining $(T ,\varphi_1)$ and $(T' ,\varphi_2)$ crosses the hyperplane $[(W ,\varphi,p)]=[(T' ,\varphi_2,\varphi_2^{-1}\varphi(p))]$.
	If $\varphi_2^{-1}\varphi$ is not a local isomorphism in a neighborhood of $p$ this implies that there exists a vertex $(\tilde{T} ,\varphi_3)$ such that $\varphi_2^{-1}\varphi_3$ is a birational morphism and $\varphi_3^{-1}\varphi$ is a local isomorphism between a neighborhood of $p$ and a neighborhood of $\varphi_3^{-1}\varphi(p)$. By the same argument as before, $\varphi_3^{-1}\varphi(p)$ is a base point of $\varphi_1^{-1}\varphi_3$ so it is also a base point of $\varphi_1^{-1}\varphi_2$ and any geodesic joining $(T ,\varphi_1)$ and $(T' ,\varphi_2)$ crosses the hyperplane $[(W ,\varphi,p)]=[(\tilde{T} ,\varphi_3,\varphi_3^{-1}\varphi(p))]$.
\end{proof}

\subsection{Action of the plane Cremona group on the blow-up complex}
Let $S $ be a projective regular surface over a field $\kk$ and let $f\in\Bir(S)$. The action of $f$ on a vertex $v=(T ,\varphi)$ of $\Cb(S )$ is defined as follows: \[f(v)\coloneqq(T , f\varphi).\] This action is well defined and it preserves the combinatorial structure of $\Cb(S )$, i.e., it induces an isometry on $\Cb(S )$. We thus obtain an action of $\Bir(S )$ on $\Cb(S )$, which is faithful. This action induces an action on the set of hyperplanes: for any $f\in\Bir(S )$ and any equivalence class of an edge $[(T ,\varphi,p)]$ we have $f([(T ,\varphi,p)])=[(T ,f\varphi,p)]$. 

For instance, if $\pi_p$ is the blow-up of the closed point $p\in T $, the edge between $(T ,\varphi)$ and $(T ',\varphi\pi_p)$ is mapped by $f$ to the edge between $(T ,f\varphi)$ and $(T' ,f\varphi\pi_p)$. The image of the edge corresponds to the blow-up of the closed point $p$ on the marked surface $(T ,f\varphi)$. This can be seen in the Figure \ref{Figure_blowup_complex}, the hyperplane $\mathcal{H}$ corresponding to $[(\P_\kk^2,\id,p)]$ is sent to the light green one corresponding to $[(\P_\kk^2,\sigma,p)]$.

The action of the Cremona group on the vertices preserves the Picard rank and therefore in particular the orientation of the cube complex. 
%
%
%
Applying the result of Frédéric Haglund (Proposition \ref{prop_action_semisimple_hag}) we obtain:

\begin{proposition}\label{cor_semisimple}
Every isometry of $\Cb(S )$ that is induced by an element $f$ of $\Bir(S )$ is either combinatorially loxodromic or combinatorially elliptic.
\end{proposition}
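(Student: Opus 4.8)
The plan is to deduce the statement directly from Haglund's dichotomy (Proposition~\ref{prop_action_semisimple_hag}), whose only hypothesis is that $f$ and each of its iterates act \emph{without inversion} on the ambient $\CAT(0)$ cube complex. Since $\Cb(S)$ is $\CAT(0)$ by Theorem~\ref{thm:Cbcat0}, the entire burden of the proof reduces to verifying this no-inversion condition for every isometry induced by an element of $\Bir(S)$.

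First I would record that the action of $\Bir(S)$ on the vertices of $\Cb(S)$ preserves the Picard rank: for $f\in\Bir(S)$ and a vertex $(T,\varphi)$ one has $f\cdot(T,\varphi)=(T,f\varphi)$, so the underlying surface $T$, and hence $\rho(T)$, is left unchanged. As the orientation of $\Cb(S)$ was defined by orienting each edge from the endpoint of larger Picard rank towards the one of smaller Picard rank, it follows that $f$ preserves this orientation. This is exactly the assertion made in the paragraph preceding the statement.

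Next I would invoke the observation recorded just before the proposition: an orientation-preserving isometry has no inversion along any hyperplane. Indeed, an inversion along $[e]$ would interchange the half-spaces $[e]^+$ and $[e]^-$, thereby reversing the chosen orientation of the edges dual to $[e]$, which contradicts orientation-preservation. Since every power $f^n$ of an orientation-preserving isometry is again orientation-preserving, none of the iterates of $f$ has an inversion either. Hence $f$ satisfies the hypotheses of Proposition~\ref{prop_action_semisimple_hag}.

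Applying Proposition~\ref{prop_action_semisimple_hag} then yields that $f$ is either combinatorially loxodromic or combinatorially elliptic, which is the desired conclusion. I do not anticipate any genuine obstacle: the proof is essentially a one-line citation of Haglund's theorem, and the only point meriting explicit care is the compatibility between orientation-preservation and the absence of inversions (for $f$ and all its powers), which is precisely the remark already established before the statement.
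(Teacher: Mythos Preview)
Your proposal is correct and matches the paper's argument essentially verbatim: the paper also observes that the $\Bir(S)$-action preserves the Picard rank and hence the orientation, then invokes Haglund's result (Proposition~\ref{prop_action_semisimple_hag}) via the remark that orientation-preserving isometries and their iterates act without inversion. There is nothing to add.
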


In fact, it is possible to show that for all hyperplanes $\mathcal{H}$ and all $f\in\Bir(S)$ the image $f(\mathcal{H}^{\pm})$ is never strictly included in $\mathcal{H}^{\mp}$. 
\subsection{The dynamical number of base points and algebraic stability}Let $S $ be a projective regular surface over a field $\kk$ and $f\in\Bir(S )$. 
The \emph{dynamical number of base points} of $f$ is defined \[
\mu(f)\coloneqq \lim_{n\to\infty}\frac{\B(f^n)}{n}.\]
 The limit always exists since the number of base points is subadditive. For algebraically closed fields of characteristic $0$, this number has been introduced and studied in \cite{Blanc_Deserti}. 
The blow-up complex $\Cb(S )$ gives a geometrical interpretation of the dynamical number of base points. 

\begin{lemma}\label{dynbasepts}
	For all $f\in\Bir(S )$ we have $\l(f)=2\mu(f)$.
\end{lemma}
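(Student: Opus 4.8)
The plan is to compute the displacement $\dist\!\big(v,f^n(v)\big)$ of a well-chosen vertex $v$ under the iterates of $f$, and to recognize the base-point count occurring in the definition of $\mu$. For an arbitrary vertex $v=(T_0,\psi)$ the action is $f^n(v)=(T_0,f^n\psi)$, so Lemma~\ref{lemma:comb_geodesic} gives
\[
\dist\!\big((T_0,\psi),(T_0,f^n\psi)\big)=\B\!\big((f^n\psi)^{-1}\psi\big)+\B\!\big(\psi^{-1}f^n\psi\big)=\B(g^{-n})+\B(g^n),
\]
where $g\coloneqq\psi^{-1}f\psi\colon T_0\dashrightarrow T_0$. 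By Remark~\ref{rmk_pt-base-inverse} one has $\B(g^{-n})=\B(g^n)$, and therefore
\[
\dist\!\big(v,f^n(v)\big)=2\,\B(g^n)\qquad\text{for every }n\in\N .
\]

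Next I would pin down the left-hand side by taking $v$ in the minimizing set. By Proposition~\ref{cor_semisimple} the isometry induced by $f$ is either elliptic or loxodromic, and in both cases $\Min(f)\neq\emptyset$; fix $v_0=(T_0,\psi)\in\Min(f)$. If $f$ is loxodromic, $v_0$ lies on an axis of $f$, which is also an axis of $f^n$, so $v_0\in\Min(f^n)$ and $\dist\!\big(v_0,f^n(v_0)\big)=\l(f^n)=n\,\l(f)$; if $f$ is elliptic then $\l(f)=0$ and $v_0$ is a fixed point, so $\dist\!\big(v_0,f^n(v_0)\big)=0=n\,\l(f)$ as well. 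Combining this with the identity of the previous paragraph yields $\B(g^n)=\tfrac{n}{2}\,\l(f)$ for every $n$, whence $\mu(g)=\lim_{n\to\infty}\B(g^n)/n=\tfrac12\,\l(f)$.

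It remains to transport this back from $g$ to $f$, that is, to check $\mu(g)=\mu(f)$, and this is where the only genuine work lies. Since $g=\psi^{-1}f\psi$ is conjugate to $f$ by the birational map $\psi\colon T_0\dashrightarrow S$, I would show that $\mu$ is invariant under such conjugation using subadditivity of the number of base points: writing $g^n=\psi^{-1}f^n\psi$ and $f^n=\psi g^n\psi^{-1}$, subadditivity gives the $n$-independent bound $|\B(g^n)-\B(f^n)|\le \B(\psi)+\B(\psi^{-1})$, so dividing by $n$ and letting $n\to\infty$ forces $\mu(g)=\mu(f)$. Hence $\mu(f)=\mu(g)=\tfrac12\,\l(f)$, which is the claimed equality $\l(f)=2\mu(f)$. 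The main obstacle is thus essentially bookkeeping: one must make sure that the distance formula of Lemma~\ref{lemma:comb_geodesic} and the conjugation-invariance estimate are applied correctly to birational maps between the possibly distinct surfaces $S$ and $T_0$, for which $\B$ and its subadditivity are nonetheless well defined.
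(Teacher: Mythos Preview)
Your proof is correct. The paper takes a slightly different, more direct route: instead of working at a minimizing vertex and then invoking conjugation invariance of $\mu$, it fixes the canonical vertex $p=(S,\id)$, where Lemma~\ref{lemma:comb_geodesic} gives $\dist(p,f^n(p))=2\B(f^n)$ on the nose, and then obtains the two inequalities by a sandwich argument. The bound $\l(f)\leq 2\mu(f)$ comes from $n\l(f)\leq \dist(p,f^n(p))=2\B(f^n)$, and the reverse bound from the triangle inequality $2\B(f^n)=\dist(p,f^n(p))\leq 2\dist(p,x)+\dist(x,f^n(x))=2K+n\l(f)$ for any $x\in\Min(f)$.

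Your approach trades the triangle-inequality step for the conjugation-invariance argument (which the paper states just before the lemma but does not use in the proof itself). It has the side benefit of yielding the exact identity $\B(g^n)=\tfrac{n}{2}\l(f)$ at a minimizing vertex, which is essentially point~(2) of Theorem~\ref{blancdesthm}. The paper's version is a bit more economical in that it never leaves the surface $S$ and avoids discussing base points of maps between different surfaces.
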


\begin{proof}By Proposition \ref{cor_semisimple}, for any $f\in\Bir(S)$, for any $x\in \Min(f)$, and for any $n\in\N$ we have $\dist(x,f^n(x))=n\l(f)$.	
	
Let $p=(S , \id)$. Then $\d(p, f^n(p))=2\B(f^n)$ by Lemma \ref{lemma:comb_geodesic}. This implies that for any $n\in \Z_{>0}$, $n\l(f)\leq 2\B(f^n)$. Taking the limit we obtain: $\l(f)\leq 2\mu(f)$. 
On the other hand, let $x\in \Min(f)$ and $K:=\d(p, x)$. Then for any $n\in \N$, $2\B(f^n)=~\d(p, f^n(p))\leq n\l(f)+2K$ and hence $2\mu(f)\leq \l(f)$.
\end{proof}

The dynamical number of base points is invariant by conjugation, since conjugation by a birational transformation $g$ changes the number of base points at most by a constant only depending on $g$. Lemma \ref{dynbasepts} immediately implies the following theorem, which has first been proven by Jérémy Blanc and Julie D\'eserti (\cite{Blanc_Deserti}) for algebraically closed fields of characteristic $0$:

\begin{theorem}\label{blancdesthm}
Let $S $ be a projective regular surface over a field $\kk$ and let $f\in~\Bir(S )$. Then 
	\begin{enumerate}
		\item $\mu(f)$ is an integer;
		\item there exists a smooth projective surface $T $ and a birational map $\varphi\colon T \dashrightarrow S $ such that $\varphi^{-1} f\varphi$ has exactly $\mu(f)$ base points;
		\item in particular, $\mu(f)=0$ if and only if $f$ is conjugate to an automorphism of a regular projective surface.
	\end{enumerate}
\end{theorem}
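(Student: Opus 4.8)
The plan is to deduce all three statements from the identity $\l(f)=2\mu(f)$ of Lemma~\ref{dynbasepts}, together with the explicit distance formula of Lemma~\ref{lemma:comb_geodesic}, the symmetry $\B(g)=\B(g^{-1})$ of Remark~\ref{rmk_pt-base-inverse}, and the semisimplicity of the action (Proposition~\ref{cor_semisimple}). The crux is statement~(2): I would first exhibit a marked surface realizing the minimum, and then read off (1) and (3) as formal consequences.

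For~(2), recall that by Proposition~\ref{cor_semisimple} the isometry induced by $f$ is either elliptic or loxodromic, so in either case its minimizing set $\Min(f)$ is nonempty (it contains a fixed vertex in the elliptic case, and the vertices of any axis in the loxodromic one). Pick a representative $(T,\varphi)$ of a vertex $v\in\Min(f)$. Since the action is given by $f\cdot(T,\varphi)=(T,f\varphi)$, the distance formula of Lemma~\ref{lemma:comb_geodesic} yields
\[
\dist\big(v,f(v)\big)=\B\big((f\varphi)^{-1}\varphi\big)+\B\big(\varphi^{-1}(f\varphi)\big)=\B\big(\varphi^{-1}f^{-1}\varphi\big)+\B\big(\varphi^{-1}f\varphi\big).
\]
By Remark~\ref{rmk_pt-base-inverse} the two terms coincide, so $\dist(v,f(v))=2\B(\varphi^{-1}f\varphi)$. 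As $v\in\Min(f)$ this distance equals $\l(f)=2\mu(f)$ by Lemma~\ref{dynbasepts}, and therefore $\B(\varphi^{-1}f\varphi)=\mu(f)$. Since $T$ is a regular projective surface and $\varphi\colon T\dashrightarrow S$ is birational, this is precisely the conjugate required in~(2).

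Statement~(1) is now immediate: $\mu(f)=\B(\varphi^{-1}f\varphi)$ is the cardinality of a set of base points, hence a non-negative integer. (Alternatively, one sees directly that $\l(f)$ is even, since the Picard rank gives an $f$-invariant $\Z$-valued function on vertices that changes by exactly $\pm1$ along each edge, so any geodesic from $x$ to $f(x)$ must take equally many ``up'' and ``down'' steps.) For~(3), if $\mu(f)=0$ then $\B(\varphi^{-1}f\varphi)=0$, so in the minimal resolution of Theorem~\ref{factorization} the resolving morphism toward the source is an isomorphism; hence $\varphi^{-1}f\varphi$ is a morphism, and by Remark~\ref{rmk_pt-base-inverse} so is its inverse, whence $\varphi^{-1}f\varphi\in\aut(T)$ and $f$ is conjugate to an automorphism of a regular projective surface. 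Conversely, if $f$ is conjugate to some $g\in\aut(T)$, then every iterate $g^n$ is again an automorphism, so $\B(g^n)=0$; thus $\mu(g)=0$, and since $\mu$ is a conjugacy invariant we conclude $\mu(f)=0$.

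I do not expect a serious obstacle, as the conceptual work is already contained in the construction of $\Cb(S)$ and in Lemma~\ref{dynbasepts}. The two points deserving a little care are the non-emptiness of $\Min(f)$ — which is exactly where semisimplicity (Proposition~\ref{cor_semisimple}) enters — and the elementary but essential fact that a birational self-transformation of a projective regular surface with no base points, equivalently one fixing a vertex of $\Cb(S)$, is genuinely an automorphism.
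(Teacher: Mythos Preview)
Your proof is correct and follows essentially the same route as the paper: pick a vertex $(T,\varphi)\in\Min(f)$, combine Lemma~\ref{dynbasepts} with the distance formula of Lemma~\ref{lemma:comb_geodesic} and the symmetry of Remark~\ref{rmk_pt-base-inverse} to get $\mu(f)=\B(\varphi^{-1}f\varphi)$, and read off (1)--(3). Your write-up is in fact slightly more careful than the paper's in spelling out why $\Min(f)\neq\emptyset$ via Proposition~\ref{cor_semisimple} and in treating both directions of~(3) explicitly.
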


\begin{proof}
We choose a vertex $(T , \varphi)$ belonging to $\Min(f)$. By Lemma \ref{dynbasepts}, we have $\l(f)=2\mu(f)$. By Lemma \ref{lemma:comb_geodesic}, we obtain that $\l(f)$ is an even integer: $\l(f)=\B(\varphi^{-1}f\varphi)+\B(\varphi^{-1}f^{-1}\varphi)=2\B(\varphi^{-1}f\varphi)$, since $\varphi^{-1} f\varphi$ has the same number of base points as $\varphi^{-1} f^{-1}\varphi$ (see Remark~\ref{rmk_pt-base-inverse}). For the same reason, they have both exactly $\mu(f)$ base points. The translation length is $0$ if and only if $f$ induces an elliptic isometry on $\CC(S)$, which is equivalent to $[T,\varphi]$ fixing a vertex and hence to $f$ being conjugate to an automorphism of $T$.
\end{proof}

Let $S$ be a projective regular surface over a field $\kk$. A birational transformation $f\in \Bir(S )$ is {\it algebraically stable} if for any $n\geq 1$, the base points $\Base(f^{n-1})$ are not contained in $\Base(f^{-1})$ and: \[ \Base(f^n)=\Base(f)\sqcup f^{-1}(\Base(f^{n-1})).\]

An equivalent characterization is that $f\in \Bir(S)$ is algebraically stable if and only if the induced map $f_*$ on the Neron Severi group of $S$ satisfies $(f_*)^n=(f^n)_*$ (see for instance \cite{diller2001dynamics}).
  Jeffrey Diller and Charles Favre showed in \cite{diller2001dynamics} that every birational transformation of a projective surface admits an algebraically stable model.
 The following proposition gives a natural geometrical proof of the theorem of Jeffrey Diller and Charles Favre. 

\begin{proposition}\label{lemma_persistent_axis}
	Let $S$ be a projective regular surface over a field $\kk$ and let $f\in~\Bir(S )$. If $(S' ,\varphi)$ belongs to $\Min(f)$ then $\varphi^{-1}f\varphi$ is algebraically stable. In particular, every birational transformation is conjugate to an algebraically stable transformation. 
\end{proposition}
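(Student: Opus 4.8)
The plan is to translate the hypothesis $(S',\varphi)\in\Min(f)$ into the \emph{exact} linear growth of the number of base points of the conjugate $g\coloneqq\varphi^{-1}f\varphi$, and then to read off algebraic stability from this growth. Write $v=(S',\varphi)$, so that $f^n(v)=(S',f^n\varphi)$. By Proposition~\ref{cor_semisimple} the isometry induced by $f$ is elliptic or loxodromic, and hence $\dist(v,f^n(v))=n\l(f)$ for every $n$ (this is exactly the identity used in the proof of Lemma~\ref{dynbasepts}, reflecting that the orbit of a point of $\Min(f)$ does not backtrack). On the other hand, Lemma~\ref{lemma:comb_geodesic} together with Remark~\ref{rmk_pt-base-inverse} gives $\dist(v,f^n(v))=\B((f^n\varphi)^{-1}\varphi)+\B(\varphi^{-1}f^n\varphi)=\B(g^{-n})+\B(g^n)=2\B(g^n)$. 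Combining these with $\l(f)=2\mu(f)=2\mu(g)$ (Lemma~\ref{dynbasepts} and the conjugation-invariance of $\mu$) yields $\B(g^n)=n\mu(g)$ for all $n\geq1$; taking $n=1$ identifies $\mu(g)=\B(g)$, so that
\[\B(g^n)=n\,\B(g)\qquad\text{for all }n\geq 1.\]

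The second ingredient is the general behaviour of base points under composition, which I would phrase using the bubble space and the bijection of Remark~\ref{rem:basepoints}. Decomposing $g^n=g^{n-1}\circ g$, a point of the bubble space is a base point of $g^n$ only if it is a base point of the inner map $g$, or else its image under $g$ is a base point of $g^{n-1}$; the points of the second type that survive are exactly those of $\Base(g^{n-1})$ lying outside $\Base(g^{-1})$, pulled back by $g^{-1}$. This gives the containment
\[\Base(g^n)\subseteq\Base(g)\sqcup g^{-1}\bigl(\Base(g^{n-1})\setminus\Base(g^{-1})\bigr),\]
the union being disjoint because $g^{-1}$ maps into $\B\B(S')\setminus\Base(g)$. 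In particular $\B(g^n)\leq\B(g)+\B(g^{n-1})-\lvert\Base(g^{n-1})\cap\Base(g^{-1})\rvert$.

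Now I would simply compare cardinalities. Substituting $\B(g^n)=n\B(g)$ and $\B(g^{n-1})=(n-1)\B(g)$ into the last inequality forces $\lvert\Base(g^{n-1})\cap\Base(g^{-1})\rvert\leq0$, hence $\Base(g^{n-1})\cap\Base(g^{-1})=\emptyset$, and simultaneously forces the displayed containment to be an equality. Thus $\Base(g^n)=\Base(g)\sqcup g^{-1}(\Base(g^{n-1}))$ with $\Base(g^{n-1})$ disjoint from $\Base(g^{-1})$, which is precisely algebraic stability of $g$ (the degenerate case $\mu(f)=0$, where $g$ is an automorphism, being algebraically stable for trivial reasons). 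Finally, $\Min(f)$ is never empty: it contains a fixed vertex when $f$ is elliptic and the vertices of an axis when $f$ is loxodromic, the two possibilities allowed by Proposition~\ref{cor_semisimple}. Hence every $f\in\Bir(S)$ admits such a marking $(S',\varphi)$, which proves the last assertion.

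I expect the main obstacle to be the rigorous justification of the composition containment in the bubble space: one must check carefully, allowing infinitely near points, that no base point of $g^n$ escapes $\Base(g)\cup g^{-1}(\Base(g^{n-1}))$ and that the displayed union is genuinely disjoint. This is the \emph{easy} direction, in the sense that cancellations can only remove base points (consistent with ``$\subseteq$''); the delicate reverse inequality, which would normally require tracking multiplicities, is here supplied for free by the exact count $\B(g^n)=n\B(g)$ coming from the geometry of $\Min(f)$.
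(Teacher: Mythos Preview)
Your proof is correct and follows essentially the same strategy as the paper: both deduce $\B(g^n)=n\B(g)$ from $v\in\Min(f)$ via the semi-simplicity of the action and Lemma~\ref{lemma:comb_geodesic}, and then confront this exact count with the general containment $\Base(g^n)\subseteq\Base(g)\sqcup g^{-1}(\Base(g^{n-1})\setminus\Base(g^{-1}))$ to force equality. The only cosmetic difference is that the paper gets $\B(g^n)=n\B(g)$ directly from $\dist(v,f^n(v))=n\dist(v,f(v))$, whereas you take a small detour through $\mu$ and Lemma~\ref{dynbasepts}; this is harmless but unnecessary.
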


\begin{proof}{Let $(S ',\varphi)\in \Min(f)$.}
{If $f$ induces an elliptic isometry on $\Cb(S)$ then $(S ',\varphi)$ is fixed by $f$ and $\varphi^{-1} f\varphi\in\aut(S' )$ has no base points. In particular, $\varphi^{-1} f\varphi$ is algebraically stable.}
	
	Now assume that $f$ induces a loxodromic isometry. This means that for any $n\in \N$, $(S ',f^{n}\varphi)$ is on the {same} axis of $f$ and \[\dist((S ',\varphi),(S ',f^{n}\varphi))=n\dist((S ',\varphi),(S ',f\varphi)).\] This implies by Lemma \ref{lemma:comb_geodesic} that $\Bs(\varphi^{-1} f^n\varphi)=n\Bs(\varphi^{-1} f\varphi)$. Moreover, for any birational transformation $g$ of a given variety, it is always true that $$\Base(g^n)\subset\Base(g)\sqcup g^{-1}(\Base(g^{n-1})\setminus \Base(g^{-1})\cap\Base(g^{n-1})).$$ Applying this to $\varphi^{-1} f\varphi$ we obtain that $\varphi^{-1} f\varphi$ is algebraically stable by considering the cardinality of the base points of $\varphi^{-1} f^n\varphi$.
\end{proof}

\subsection{Regularization theorems} First we will establish a criterion for regularizability. Note that in the surface case, if a group of birational transformations is conjugate to a subgroup of $\aut(S)$, we can always assume $S$ to be regular by taking a resolution of singularities.

\begin{proposition}\label{thm_reg_boundedorbit}
	Let $S $ be a projective regular surface over a field $\kk$. A subgroup $G\subset\Bir(S )$ is projectively regularizable if and only if there exists a constant $K$ such that $\B(f)\leq K$ for all $f\in G$.
\end{proposition}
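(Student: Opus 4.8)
The plan is to establish both implications by translating projective regularizability into a statement about boundedness of orbits in the blow-up complex $\Cb(S)$, and then invoke the fixed-point Proposition~\ref{fixedpoint}. The key dictionary is provided by Lemma~\ref{lemma:comb_geodesic}: for the base vertex $p_0 = (S, \id)$ and any $f \in G$, the distance $\dist(p_0, f(p_0)) = \dist((S,\id),(S,f)) = \B(f) + \B(f^{-1}) = 2\B(f)$, where the last equality is Remark~\ref{rmk_pt-base-inverse}. Thus the orbit $G \cdot p_0$ is bounded in the combinatorial metric if and only if there exists a constant $K$ with $\B(f) \leq K$ for all $f \in G$. This is the central observation that links the two sides of the equivalence.

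For the forward direction, suppose $G$ is projectively regularizable, so there is a projective regular surface $T$ and a birational map $\varphi \colon T \dashrightarrow S$ with $\varphi^{-1} G \varphi \subset \aut(T)$. The vertex $w = (T, \varphi)$ is then fixed by every element of $G$: indeed $f(w) = (T, f\varphi)$, and since $\varphi^{-1} f \varphi \in \aut(T)$, the marking comparison map $\varphi^{-1}(f\varphi) = \varphi^{-1}f\varphi$ is an isomorphism of $T$, so $(T, f\varphi) \sim (T, \varphi)$ by the equivalence defining vertices. Hence $G$ fixes $w$, so the orbit $G \cdot p_0$ lies in a ball of radius $\dist(p_0, w)$ around $w$ and is bounded. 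By the dictionary above, $\B(f)$ is uniformly bounded over $G$.

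For the reverse direction, suppose $\B(f) \leq K$ for all $f \in G$. Then the orbit $G \cdot p_0$ is bounded, and since $\Bir(S)$ preserves the orientation of $\Cb(S)$ (it preserves the Picard rank), the action of $G$ preserves this orientation. Applying Proposition~\ref{fixedpoint} to the action of $G$ on the $\CAT(0)$ cube complex $\Cb(S)$ (which is $\CAT(0)$ by Theorem~\ref{thm:Cbcat0}), we conclude that $G$ fixes a vertex $(T, \varphi)$. For this fixed vertex, $f(T,\varphi) = (T, f\varphi) \sim (T,\varphi)$ for all $f \in G$, which means precisely that $\varphi^{-1} f \varphi \in \aut(T)$ for all $f$, i.e.\ $\varphi^{-1} G \varphi \subset \aut(T)$. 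Since the vertices of $\Cb(S)$ are classes of \emph{projective} regular surfaces, $T$ is projective, so $G$ is projectively regularizable.

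I expect the main subtlety to lie in verifying the fixed-vertex translation carefully in both directions, namely that a $G$-fixed vertex is exactly a conjugating surface $T$ with $\varphi^{-1} G \varphi \subset \aut(T)$; this hinges on unwinding the vertex equivalence relation $(T,f\varphi) \sim (T,\varphi) \iff \varphi^{-1}f\varphi \in \aut(T)$ and on confirming that the orientation-preservation hypothesis of Proposition~\ref{fixedpoint} is genuinely available. The bounded-orbit equivalence itself is essentially immediate from Lemma~\ref{lemma:comb_geodesic} once the base point is fixed.
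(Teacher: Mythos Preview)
Your proof is correct and follows essentially the same approach as the paper: translate the bounded base-point condition into a bounded orbit of the vertex $[(S,\id)]$ in $\Cb(S)$ via Lemma~\ref{lemma:comb_geodesic}, and then use Proposition~\ref{fixedpoint} to pass between bounded orbits and fixed vertices. Your write-up is in fact slightly more explicit than the paper's in justifying the orientation-preservation hypothesis of Proposition~\ref{fixedpoint} and in unwinding the equivalence $(T,f\varphi)\sim(T,\varphi)\iff \varphi^{-1}f\varphi\in\aut(T)$.
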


\begin{proof}
	Assume there exists a $K$ such that $\B(f)\leq K$ for all $f\in G$. Consider the vertex $p=[(S , \id)]$ in $\Cb(S)$. For every $f\in G$ we have that $d(p, f(p))\leq 2K$, in particular the orbit of $p$ under $G$ is bounded. By Proposition~\ref{fixedpoint}, $G$ fixes a vertex $(T ,\varphi)$ and is therefore conjugate by $\varphi$ to a subgroup of $\aut(T )$. On the other hand, if $G$ is regularizable, then it fixes a vertex and so its orbits are bounded, in particular the orbit of the vertex $p=[(S , \id)]$. This implies that there exists a constant $K$ such that $\B(f)\leq K$ for all $f\in G$.
\end{proof}

The number of base points of an element $g\in\Bir(\p_\kk^2 )$ is bounded by a constant depending only on the degree of $g$. Hence, as a particular instance of Proposition~\ref{thm_reg_boundedorbit}, we obtain the following well-known result, which is usually shown using results on the Zariski-topology on $\Bir(\p_\kk^2)$, Weil's regularization theorem, and Sumihiro's results on equivariant completions:

\begin{corollary}\label{weilcor}
	Let $\kk$ be a field and $G\subset\Bir(\P_\kk^2 )$ be a group such that $\deg(g)\leq K$ for all $g\in G$, then $G$ is projectively regularizable.
\end{corollary}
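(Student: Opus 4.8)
The plan is to deduce this directly from Proposition~\ref{thm_reg_boundedorbit}, whose hypothesis is precisely a uniform bound on the number of base points $\B(g)$ for $g \in G$. Since we are given a uniform bound on the degrees, it suffices to bound $\B(g)$ in terms of $\deg(g)$; the corollary then follows at once.

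First I would recall the structure of the homaloidal net attached to an element $g \in \Bir(\P_\kk^2)$ of degree $d$. The map $g$ is defined by a linear system of plane curves of degree $d$ whose base points $p_1, \dots, p_r$ in the bubble space $\B\B(\P_\kk^2)$ carry multiplicities $m_1, \dots, m_r \geq 1$. Passing to the minimal resolution furnished by Theorem~\ref{factorization}, these $p_i$ are exactly the points contracted by $\eta$, so that $\B(g) = r$. The Noether equations read
\[ \sum_i m_i^2 = d^2 - 1, \qquad \sum_i m_i = 3(d-1), \]
expressing that the strict transform of a general member of the net under the minimal resolution has self-intersection $1$ and arithmetic genus $0$.

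Since each $m_i \geq 1$, the first Noether equation gives $\B(g) = r \leq \sum_i m_i^2 = d^2 - 1$. Hence the hypothesis $\deg(g) \leq K$ on all $g \in G$ yields the uniform bound $\B(g) \leq K^2 - 1$, and applying Proposition~\ref{thm_reg_boundedorbit} with this constant shows that $G$ is projectively regularizable.

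The step requiring care is that the statement is over an arbitrary field $\kk$, whereas the Noether relations are most transparent geometrically. I would handle this by base change to the algebraic closure $\bar\kk$: each closed base point of $g$ over $\kk$ gives rise to at least one geometric base point of $g_{\bar\kk}$ while the degree is unchanged, so $\B(g) \leq \B(g_{\bar\kk}) \leq \deg(g)^2 - 1$. The only remaining bookkeeping is to confirm that the base points counted by $\Base(g)$ in the bubble space coincide with the $p_i$ of positive multiplicity, including the infinitely near ones, which is exactly what the minimal resolution of Theorem~\ref{factorization} provides.
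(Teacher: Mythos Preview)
Your proposal is correct and follows exactly the route the paper takes: bound $\B(g)$ uniformly in terms of $\deg(g)$ and then invoke Proposition~\ref{thm_reg_boundedorbit}. The paper merely asserts that such a bound exists without writing it down, whereas you supply the explicit Noether inequality $\B(g)\leq \deg(g)^2-1$ (together with the base-change step to $\bar\kk$, which the paper also uses elsewhere); this is added detail, not a different argument.
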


We have now also all the necessary tools in order to prove Theorem~\ref{prop_reg_field_extension}:

\begin{proof}[Proof of Theorem~\ref{prop_reg_field_extension}]
	After taking a completion and a resolution of singularities we may assume that $S$ is projective and regular. Since $\kk$ is perfect and since $S$ is geometrically irreducible, the scheme $T'_{\bar{\kk}}$ is a regular {projective} variety over $\bar{\kk}$ (see \cite[p.~90, Remark~2.9, Example~2.10]{Liu}), and $\aut(T')\subset\aut(T'_{\bar{\kk}})$, in particular, if $\varphi G\varphi^{-1}\subset \aut(T')$, then $\varphi G\varphi^{-1}\subset \aut(T'_{\bar{\kk}})$. So \ref{item_reg_k} implies \ref{item_reg_L}.
	
	Now assume that \ref{item_reg_L} is satisfied. We denote by $\Bs_{\kk}(f)$ and $\Bs_{\bar\kk}(f)$ the number of base points of $f$ considered as a birational transformation defined over $\kk$ and $\bar{\kk}$ respectively. By Proposition \ref{thm_reg_boundedorbit}, $\B_{\bar\kk}(f)\leq K$ for all $f\in G$ for some global constant $K$. But for any $f\in G$, we have $\B_{\kk}(f)\leq \B_{\bar\kk}(f)$. Hence, $\B_{\kk}(f)\leq K$ for all $f\in G$, and by Proposition \ref{thm_reg_boundedorbit} we obtain \ref{item_reg_k}.
\end{proof}

\begin{proposition}\label{lemme_FW_surface}
 Let $S $ be a surface over a field $\kk$. Let $G\subset\Bir(S )$ be a subgroup with property FW then it is projectively regularizable.
\end{proposition}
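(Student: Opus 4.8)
The plan is to read the statement straight off the action of $\Bir(S)$ on the blow-up complex $\Cb(S)$, combined with the fixed-point behaviour forced by property FW. Essentially all the substantive work has already been done in proving that $\Cb(S)$ is $\CAT(0)$ and in the regularization criterion, so the proof should be a short assembly.

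First I would reduce to the case in which $S$ is projective and regular. Projective regularizability is a birational notion, and $\Bir(S)$ depends only on the birational class of $S$; so after taking a completion and then a resolution of singularities (available for surfaces over an arbitrary field) I may replace $S$ by a projective regular model without changing $G\subset\Bir(S)$. From now on $\Cb(S)$ is defined and, by Theorem~\ref{thm:Cbcat0}, it is a $\CAT(0)$ cube complex.

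Next I would invoke the action of $G$ on $\Cb(S)$ by isometries. As recorded earlier, this action preserves the Picard rank, hence preserves the orientation of the complex. Since $G$ has property FW, its action on the $\CAT(0)$ cube complex $\Cb(S)$ has a fixed point; in particular the $G$-orbit of the base vertex $[(S,\id)]$ is bounded. Because the action is orientation-preserving, Proposition~\ref{fixedpoint} upgrades this bounded orbit to an honest fixed vertex $(T,\varphi)$, where $T$ is a projective regular surface. (Alternatively, the bounded orbit bounds $\B(f)$ uniformly over $f\in G$ via Lemma~\ref{lemma:comb_geodesic}, and Proposition~\ref{thm_reg_boundedorbit} then applies directly.)

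Finally, a fixed vertex is exactly the conclusion we want: the identity $f\cdot(T,\varphi)=(T,f\varphi)$ equals $(T,\varphi)$ precisely when $\varphi^{-1}f\varphi\in\aut(T)$, so fixing $(T,\varphi)$ means $\varphi^{-1}G\varphi\subset\aut(T)$ with $T$ projective and regular, i.e.\ $G$ is projectively regularizable. The only points requiring genuine care are the reduction to the projective regular case (resolution of singularities for surfaces) and the passage from a fixed \emph{point} of the FW action to a fixed \emph{vertex}; the latter is precisely where orientation-preservation and Proposition~\ref{fixedpoint} are used, and it is the step I would flag as the one most prone to an oversight, though I do not expect it to present a real obstacle.
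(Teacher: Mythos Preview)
Your proof is correct and follows essentially the same approach as the paper: reduce to a projective regular model, invoke property FW to get a fixed point on $\Cb(S)$, and read off projective regularizability from a fixed vertex. You are in fact more careful than the paper at the one delicate step, namely upgrading the FW-guaranteed fixed \emph{point} to a fixed \emph{vertex} via orientation-preservation and Proposition~\ref{fixedpoint}; the paper's proof asserts this passage without comment.
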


\begin{proof}
	After taking the completion and resolution of singularities, we may assume $S $ to be projective and regular.
	Since $G$ has property FW, its action on $\Cb(S )$ has a fixed point and therefore it fixes also a vertex $[(T ,\varphi)]$. This implies the claim. 
\end{proof}

In \cite[Theorem B]{cantat-cornulier} the authors prove a stronger version of Proposition~\ref{lemme_FW_surface} in the case of algebraically closed fields. Namely, they show that a subgroup $G\subset~\Cr_2(\kk)$ is conjugate to a subgroup of the automorphism group of a minimal surface. However, their methods are more involved and they use some strong results from Vladimir Danilov and Marat Gizatullin, whereas Proposition \ref{lemme_FW_surface} gives a short and more conceptual proof.

\begin{proposition}Let $S $ be a surface over a field $\kk$. Let $G\subset \Bir(S )$ be a subgroup and $H\subset G$ a subgroup of finite index. Then $G$ is regularizable if and only if $H$ is.
\end{proposition}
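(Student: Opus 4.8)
The plan is to handle the two directions separately and to put all the work into the converse. The forward implication I would dispatch immediately: if $fGf^{-1}\subset\aut(Y)$ for some variety $Y$ and some birational $f\colon S\dashrightarrow Y$, then $fHf^{-1}\subset fGf^{-1}\subset\aut(Y)$, so $H$ is regularizable via the same witness. This uses neither the finite index nor that $S$ is a surface.

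For the converse, assuming $H$ is regularizable and $m\coloneqq[G:H]<\infty$, I would build a single model carrying a \emph{regular} $G$-action by a co-induction (monomial representation) construction. After conjugating by the birational map witnessing the regularizability of $H$, I may assume $H\subset\aut(Y)$ for a variety $Y$ birational to $S$. Fixing right coset representatives $t_1=\id,t_2,\dots,t_m$ with $G=\bigsqcup_{i}Ht_i$, I would introduce
\[
\Phi\colon Y\dashrightarrow Y^m,\qquad \Phi(y)=\big(t_1(y),\dots,t_m(y)\big),
\]
which is birational onto its image because its first component is the identity, so the first projection inverts it. I then set $Y'\coloneqq\overline{\Phi(Y)}\subset Y^m$, a subvariety birational to $Y$, hence to $S$.

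The heart of the matter is to upgrade the birational $G$-action on $Y$ to a regular $G$-action on $Y^m$ for which $\Phi$ is equivariant. For $g\in G$ and each $i$ there are a unique index $\tau_g(i)$ and an element $h_{i,g}\coloneqq t_ig\,t_{\tau_g(i)}^{-1}\in H$ with $t_ig=h_{i,g}t_{\tau_g(i)}$; I would let $g$ act by $(g\cdot z)_i\coloneqq h_{i,g}(z_{\tau_g(i)})$. Since every $h_{i,g}$ lies in $H\subset\aut(Y)$, this action is assembled from automorphisms of the factors together with a permutation of the coordinates, so it lands in $\aut(Y^m)$. The relations $\tau_{g_1g_2}=\tau_{g_2}\circ\tau_{g_1}$ and $h_{i,g_1g_2}=h_{i,g_1}\,h_{\tau_{g_1}(i),g_2}$, immediate from the coset bookkeeping, are exactly what make this a genuine left action and give $\Phi(g\cdot y)=g\cdot\Phi(y)$. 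It then follows that $G$ preserves the closed equivariant image $Y'$ and acts on it by automorphisms, so transporting back through $\Phi$ and the initial conjugation yields a birational map $S\dashrightarrow Y'$ conjugating $G$ into $\aut(Y')$, which is the desired regularization.

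I expect the only real obstacle to be the verification in the previous paragraph that the displayed formula defines an action \emph{by automorphisms} and is $\Phi$-equivariant; everything hinges on $H\subset\aut(Y)$, so that each $h_{i,g}$ acts regularly on a factor. I would also point out that this argument is insensitive to $\dim S$ and bypasses the blow-up complex. Indeed, for surfaces regularizability (with an arbitrary, possibly non-projective, target) is strictly weaker than projective regularizability, so the bounded-orbit criterion of Proposition~\ref{thm_reg_boundedorbit} applied to $\Cb(S)$ would only settle the projective analogue; the co-induced model is what delivers the statement in the general form asserted.
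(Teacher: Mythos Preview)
Your argument is correct and follows a genuinely different route from the paper. The paper works inside the blow-up complex $\Cb(S)$: assuming $H$ fixes a vertex $(T,\varphi)$, it bounds $\B(\varphi^{-1}g\varphi)$ uniformly over $g\in G$ by the maximum of $\B(\varphi^{-1}f_i\varphi)$ over a set of coset representatives $f_1,\dots,f_n$, and then invokes Proposition~\ref{thm_reg_boundedorbit} to conclude that $G$ fixes a vertex. Your co-induction, by contrast, manufactures an explicit regular $G$-model $Y'\subset Y^m$ directly from a regular $H$-model $Y$, with no appeal to the cube complex; the coset bookkeeping you write out is exactly what is needed, and the equivariance of $\Phi$ then forces $G$ to preserve the closure $Y'$.

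Your closing remark is on point and worth highlighting. Vertices of $\Cb(S)$ are marked \emph{projective} regular surfaces, so the paper's proof as written really establishes the projective version of the statement: if $H$ is projectively regularizable then so is $G$. The proposition, however, says ``regularizable'', and your construction handles precisely that hypothesis (and is, as you note, dimension-free). The trade-off goes the other way too: when $H$ does happen to be projectively regularizable, the paper's argument produces a projective model for $G$, whereas your $Y'$ is only a variety with no projectivity guaranteed. One small technicality you may want to flag: if $Y$ is not geometrically integral then $Y^m$ need not be a variety, but since $Y'$ is the closure of the image of an irreducible set, taken with its reduced structure it is a variety and the argument is unaffected.
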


\begin{proof}
	After completion and resolution of singularities, we may assume $S $ to be projective and regular. Assume that $H$ is regularizable. This implies that there exists a vertex $(T ,\varphi)$ in $\Cb(S )$ which is a fixed point for the action of $H$ on this cube complex. Let $n=[G:H]$ and let $f_1,\dots, f_n$ be representatives of the cosets of $H$ in $G$ and define $K\coloneqq\max\{\B(\varphi^{-1}f_1\varphi),\dots,\B(\varphi^{-1}f_n\varphi)\}$. Note that $K$ does not depend on the choice of representatives. Indeed, if $g=f\mod H$ then $\varphi^{-1}g\varphi=\varphi^{-1}fh\varphi$ for some $h\in H$ and hence $\B({\varphi^{-1}}g{\varphi})=\B({\varphi^{-1}}f{\varphi})$. This implies that $\B(\varphi^{-1}g\varphi)\leq K$ for all $g\in G$. Hence, Proposition~\ref{thm_reg_boundedorbit} implies that $G$ is projectively regularizable. The other direction of the implication is trivial.
\end{proof}

\subsection{About the generalization to higher dimensional varieties}\label{subsection_generalization}

One of the main drawbacks of our cube complexes $\CC^\l(X )$ for higher dimensional $X$, which we construct in the next section, is that its vertices are represented by marked varieties, rather than marked \emph{projective} varieties, as it is the case with the {blow-up} complex $\Cb(X)$ where $X$ is a surface. One could be tempted to try to construct a $\CAT(0)$ cube complex whose vertices are marked projective varieties of dimension~$3$. However, this is not possible. The group of monomial birational transformations of $\p_\kk^3$ is isomorphic to $\GL_3(\Z)$ and as such has the property FW, i.e., every isometric action of $\GL_3(\Z)$ on a $\CAT(0)$ cube complex has a fixed point. However, it is known that the group of monomial transformations on $\p_\kk^3$ is not conjugate to any subgroup of the automorphism group of a projective variety. This can be seen, for instance, by considering the degree sequence of the monomial transformation $(x,y,z)\dashmapsto (yx^{-1},zx^{-1},x)$, which does not satisfy any linear recurrence and is therefore not conjugate to an automorphism of a regular projective threefold (see \cite{MR2358970} for details). 

\section{Cube complexes for varieties of arbitrary dimension}\label{section_cc_higher_ranks}
\subsection{Construction}
Let $X $ be a variety over a field $\kk$ of dimension $d$. The goal is to construct for each $0\leq \l\leq d-1$ a $\CAT(0)$ cube complex $\CC^\l(X )$ on which the group $\Psaut^\l(X )$ of automorphisms in codimension $\l$ of $X $ acts faithfully by isometries. 

We define the vertices of $\mathcal{C}^\l(X )$ to be equivalence classes of marked pairs $(A ,\varphi)$, where $A$ is a  variety over $\kk$ and $\varphi\colon A \dashrightarrow X$ is an isomorphism
in codimension $\l$. Two marked pairs $(A ,\varphi)$ and $(B ,\psi)$ are equivalent if $\varphi^{-1}\psi\colon B \to A $ is an isomorphism in codimension $\l+1$. 

We place an edge between two vertices $v_1$ and $v_2$ (with orientation from $v_1$ to $v_2$), if $v_1$ can be represented by a couple $(A ,\varphi)$ such that there exists an irreducible subvariety $D \subset A $ of codimension $\l+1$ and $v_2$ is represented by $(A \setminus D , \varphi|_{A \setminus D })$. We say that the vertex $v_2$ is dominated by $v_1$ if there exists a sequence of edges connecting $v_1$ to $v_2$ that are all oriented from $v_1$ to $v_2$.

\begin{example}
	Let $X $ be a normal variety and consider $\CC^0(X )$. Let $v_1=[(A ,\varphi)]$ and $v_2=[(B , \psi)]$ be two vertices  and $\psi^{-1}\varphi\colon A \to B$ is a blow-up of an irreducible subvariety $D \subset B $ of codimension $>1$. Then $v_1$ and $v_2$ are connected by an edge with orientation from $v_1$ to $v_2$.
\end{example}

More generally, we define a $n$-cube between $2^n$ distinct vertices $v_1,\dots, v_{2^n}$ if there exist representatives $(A_{i}, \varphi_i)$ of the vertices $v_i$ with the following properties:
\begin{itemize}
	\item There exist an $i$, $1\leq i\leq 2^n$, and $n$ distinct irreducible subvarieties $V_{1},\dots, V_{n}\subset A_{i}$ of codimension $\l+1$.
	\item For each $1\leq j\leq 2^n$ different of $i$, there exists $1\leq m\leq n$ such that $A_{j}=A_{i}\setminus \{V_{i_1}\cup\dots \cup V_{i_m}\}$ for some $1\leq i_1,\dots, i_m\leq n$.
	\item The $\varphi_j$ equals the restriction of $\varphi_i$ to $A_{j}$.
\end{itemize}

With these definitions, for any $0\leq \l<\dim(X )$, the $\CC^\l(X )$ are infinite dimensional cube complexes. Our next goal is to show that they are $\CAT(0)$. We start with an easy remark.

\begin{remark}\label{intersection}
	Let $X $ be a variety and let $[(A_{1},\varphi_1)],\dots, [(A_{n},\varphi_n)]$ be a finite set of vertices in $\CC^\l(X )$. Let $U_{i}\subset A_{i}$ be open dense subsets with complements of codimension $>\l$ such that for $2\leq i\leq n$, $\varphi_i^{-1}\varphi_1$ induces an isomorphism between $U_{1}$ and $U_{i}$. 
	Then the vertex $[(U_{1},\varphi_1|_{U_{1}})]$ is dominated by all the vertices $[(A_{i},\varphi_i)]$. Indeed, $[(U_{i},\varphi_i|_{U_{i}})]$ is dominated by $[(A_{i},\varphi_i)]$,
	and by construction, all the vertices $[(U_{i},\varphi_i|_{U_{i}})]$ coincide.
\end{remark}

\begin{lemma}\label{lemma_v_dominates_cube}
	Let $v, v_1,\dots, v_n$ be vertices in $\CC^\l(X)$ such that $v$ dominates $v_1,\dots, v_n$. Then there exists a cube containing $v,v_1,\dots, v_n$ all of whose vertices are dominated by $v$.
\end{lemma}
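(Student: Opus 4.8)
The plan is to reduce everything to a single chosen representative of $v$, to reinterpret the desired cube combinatorially as the family of all removals of a finite collection of codimension $\l+1$ subvarieties, and then to invoke the definition of an $n$-cube directly. The first step I would isolate is the following characterization of domination, which carries the real content: \emph{if $v$ dominates $w$, then for every representative $(A,\varphi)$ of $v$ there is a closed subset $Z\subset A$ which is a finite union of irreducible subvarieties of codimension $\l+1$ such that $w=[(A\setminus Z,\varphi|_{A\setminus Z})]$.} I would prove this by induction on the length $k$ of an oriented path from $v$ to $w$. The case $k=0$ is immediate with $Z=\emptyset$. For the inductive step, write the path as $v\to\cdots\to w'\to w$; the induction hypothesis applied to the fixed $(A,\varphi)$ produces a finite union $Z'$ of irreducible codimension $\l+1$ subvarieties with $w'=[(A\setminus Z',\varphi|)]$, while the last edge provides some representative $(B,\beta)$ of $w'$ and an irreducible subvariety $D\subset B$ of codimension $\l+1$ with $w=[(B\setminus D,\beta|)]$.

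The crux of the whole argument, and the step I expect to be the main obstacle, is the \emph{transfer} of $D$ from $B$ to $A\setminus Z'$. Since $(A\setminus Z',\varphi|)$ and $(B,\beta)$ represent the same vertex $w'$, the map $g=\varphi^{-1}\beta\colon B\dashrightarrow A$ is an isomorphism in codimension $\l+1$ onto $A\setminus Z'$, so there are open subsets $U\subset B$ and $U'\subset A\setminus Z'$, each with complement of codimension $>\l+1$, identified by $g$. As $D$ has codimension exactly $\l+1$, it cannot be contained in $B\setminus U$, so $D\cap U$ is dense in $D$; its image $g(D\cap U)$ is locally closed of codimension $\l+1$ in $A$, and I take $D''$ to be its closure in $A$, an irreducible subvariety of codimension $\l+1$. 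A codimension count then shows that $g$ restricts to an isomorphism in codimension $\l+1$ between $(A\setminus Z')\setminus D''$ and $B\setminus D$: the two sides agree away from the exceptional loci of $g$ together with the boundary $\overline{g(D\cap U)}\setminus g(D\cap U)$, all of which have codimension $>\l+1$. Hence $w=[(A\setminus(Z'\cup D''),\varphi|)]$, and setting $Z=Z'\cup D''$ closes the induction. This bookkeeping of codimensions across isomorphisms in codimension $\l+1$ is the delicate point that requires care.

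With the characterization established, the lemma follows quickly. Fixing one representative $(A,\varphi)$ of $v$ and applying the characterization to each $v_i$ yields finite unions $Z_i$ of irreducible codimension $\l+1$ subvarieties of $A$ with $v_i=[(A\setminus Z_i,\varphi|)]$. Let $V_1,\dots,V_m$ be the distinct irreducible components occurring among $Z_1,\dots,Z_n$; these are $m$ distinct irreducible codimension $\l+1$ subvarieties of $A$, so the definition of an $m$-cube applies with top corner $(A,\varphi)$, and the $2^m$ vertices $[(A\setminus\bigcup_{j\in T}V_j,\varphi|)]$ for $T\subseteq\{1,\dots,m\}$ span a cube. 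These corners are pairwise distinct, since for $T\neq T'$ some $V_j$ lies in their symmetric difference and its removal changes the equivalence class in codimension exactly $\l+1$. The corner $T=\emptyset$ is $v$, each $v_i$ is the corner $T=\{\,j:V_j\subset Z_i\,\}$, and every corner is obtained from $(A,\varphi)$ by removing codimension $\l+1$ subvarieties, hence is dominated by $v$. This exhibits the desired cube containing $v,v_1,\dots,v_n$ with all vertices dominated by $v$, completing the proof.
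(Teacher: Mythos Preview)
Your argument is correct and follows the same route as the paper's own proof: both establish that for any representative $(A,\varphi)$ of $v$ and any oriented edge out of $v$, the target vertex can be represented by removing a single irreducible codimension~$\l+1$ subvariety of that same $A$, and then induct along the oriented path to write each $v_i$ as $[(A\setminus D^i,\varphi|)]$ for a finite union $D^i$ of such subvarieties. The paper states the ``transfer along an edge'' step as a bare observation and omits the distinctness check for the cube corners; you have supplied the codimension bookkeeping for the transfer and the verification of distinctness, so your write-up is a more detailed version of the same proof rather than a different approach.
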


\begin{proof}
Let us observe that for any edge connecting a vertex $v$ with vertex $v'$, and for any representative $(A,\varphi)$ of $v$ there exists an irreducible subvariety $D \subset A $ of codimension $\l+1$ and $v'$ is represented by $(A \setminus D , \varphi|_{A \setminus D })$. Because $v$ dominates the $v_i$ we can therefore find by induction a representative of the $v_i$ of the form $(A \setminus D^i, \varphi|_{A \setminus D^i })$ where $D^i$ is a finite union of irreducible subvarieties of codimension $\l+1$ of $A$ for each $i$. Thi shows that $v,v_1,\dots,v_n$ form a cube whose vertices are dominated by $v$.
\end{proof}

\begin{lemma}\label{simplyconnected_l}
	Let $X $ be a  variety. The cube complexes $\CC^\l(X )$ are simply connected.
\end{lemma}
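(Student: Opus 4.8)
The plan is to mimic the proof of Lemma~\ref{simplyconnected} for the blow-up complex, but to replace the Picard rank by a height function measuring the domination depth over a fixed common lower bound, and to push loops \emph{down} towards that lower bound instead of up. First I would check connectedness: given two vertices, Remark~\ref{intersection} produces a vertex dominated by both of them, and since domination is by definition realized by oriented edge-paths, concatenating these two down-paths yields a path between the two vertices; hence $\CC^\l(X)$ is connected.

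For simple connectedness, let $\gamma$ be a loop, which I deform by a homotopy into the $1$-skeleton; being compact it meets only finitely many vertices $v_0,\dots,v_n=v_0$. By Remark~\ref{intersection} there is a vertex $w$ dominated by all of them. For a vertex $v$ dominating $w$, I choose a representative $(A,\varphi)$ of $v$ together with an open dense $U\subseteq A$ with complement of codimension $>\l$ representing $w$, and define $h(v)$ to be the number of irreducible components of codimension $\l+1$ of $A\setminus U$. Any two opens representing $w$ differ only in codimension $\geq \l+2$, so $h(v)$ is well defined; moreover, if an edge removes a single irreducible codimension-$(\l+1)$ subvariety $D_0$ from $v$ to a vertex $v'$ that still dominates $w$, then $h(v')=h(v)-1$. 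Consequently, along every edge of $\gamma$ the value of $h$ changes by exactly $\pm 1$, and a local maximum $v_{i_0}$ of $h$ is precisely a vertex dominating both of its neighbours $v_{i_0-1}$ and $v_{i_0+1}$.

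Now I run the reduction. If some $v_{i_0-1}=v_{i_0+1}$, the loop backtracks and I contract the subpath $v_{i_0-1},v_{i_0},v_{i_0+1}$ to the constant path. Otherwise I pick a local maximum $v_{i_0}$ of $h$; since $v_{i_0}$ dominates its two distinct neighbours, Lemma~\ref{lemma_v_dominates_cube} provides a square whose fourth vertex $v_{i_0}'$ is obtained from $v_{i_0}$ by removing both subvarieties, and I homotope $\gamma$ across this square so that it passes through $v_{i_0}'$ instead of $v_{i_0}$. The crucial point is that $v_{i_0}'$ still dominates $w$: writing $v_{i_0-1}=[(A\setminus D_1,\cdot)]$, $v_{i_0+1}=[(A\setminus D_2,\cdot)]$ and $v_{i_0}'=[(A\setminus(D_1\cup D_2),\cdot)]$, the opens $U_1\subseteq A\setminus D_1$ and $U_2\subseteq A\setminus D_2$ representing $w$ can be intersected, and $U_1\cap U_2\subseteq A\setminus(D_1\cup D_2)$ still has complement of codimension $>\l$ (since $U_1\setminus U_2$ has codimension $\geq \l+2$). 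Thus the property that $w$ is dominated by every vertex of the loop is preserved inductively, and $h(v_{i_0}')=h(v_{i_0})-2$.

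Finally I argue termination. Each move strictly decreases the pair $\big(\sum_i h(v_i),\,n\big)$ in the lexicographic order on $\N\times\N$, which is well founded, so after finitely many steps no local maximum of $h$ remains. A cyclic sequence of integers with steps $\pm1$ and no local maximum must be constant, i.e. the loop is reduced to a point; hence every loop is null-homotopic and $\CC^\l(X)$ is simply connected. I expect the main obstacle to be exactly the persistence of the common lower bound $w$ under the square moves, together with the well-definedness of $h$; both are settled by the codimension bookkeeping above, which is the dual of the argument showing that the dominating surface $W$ survives the homotopies in Lemma~\ref{simplyconnected}.
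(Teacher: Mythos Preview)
Your proof is correct and follows essentially the same strategy as the paper's: find a common dominated vertex $w$ via Remark~\ref{intersection}, measure the height of each vertex of the loop above $w$ by counting codimension-$(\l+1)$ components (your $h$ is exactly the paper's $c$), and push local peaks down across squares until the loop collapses. The only cosmetic differences are that the paper works with the global maximum of $c$ rather than an arbitrary local maximum, and that the paper secures ``$v_{i_0}'$ still dominates $w$'' by including $w$ itself in the call to Lemma~\ref{lemma_v_dominates_cube}, whereas you verify it directly by intersecting the two opens $U_1,U_2$; your argument here is in fact a bit more explicit than the paper's.
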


\begin{proof}
	Remark~\ref{intersection} implies that $\CC^\l(X )$ is connected.
	Let now $\gamma$ be a loop in $\CC^\l(X )$. Up to homotopy we may assume that $\gamma$ is contained in the $1$-skeleton of $\CC^\l(X )$. Denote by $v_1,\dots, v_n$ the vertices that are contained in $\gamma$ and denote by $v$ the vertex from Remark~\ref{intersection} that is dominated by all the $v_i$, i.e., we can write $v_i=[(A_{i},\varphi_i)]$, $v=[(A,\varphi)]$, such that $\varphi_i^{-1}\varphi\colon A\to A_{i}$ is an {open immersion}. We will show that $\gamma$ can be contracted to $v$. 
	
	Define $c(v_i)$ to be the number of irreducible components of pure codimension $\l+1$ of $A_{i}\setminus \varphi_i^{-1}\varphi(A)$. Note that $c(v_i)$ does not depend on the representative of $v_i$ and that $c(v_i)=0$ if and only if $v_i=v$. 
	
	Let now $j$ be such that $c(v_j)$ is maximal among all the $v_j$. Since the vertices $v_{j-1}$ and $v_{j+1}$ (here, we take the indices modulo $n$) are connected to $v_j$ by an edge, we have $c(v_{j-1})=c(v_{j+1})=c(v_j)-1$. If $v_{j-1}=v_{j+1}$, we can contract the part of $\gamma$ from $v_{j-1}$ to $v_{j+1}$ by a homotopy to the vertex $v_{j-1}$. Now, assume that $v_{j-1}\neq v_{j+1}$. Then $v_j$ dominates $v_{j-1}$, $v_{j+1}$ and $v$ so by Lemma \ref{lemma_v_dominates_cube} there exists a cube containing $v_v$, $v_{j-1}$, $v_{j+1}$ and $v$. Moreover, this cube contains a square spanned by $v_{j}$, $v_{j-1}$ and $v_{j+1}$. We denote by $v_j'$ the fourth vertex.
We now deform $\gamma$ by a homotopy so that it passes through $v_j'$ instead of $v_j$. By the definition of our cube, $v_j'$ still dominates $v$ and $c(v_j')<c(v_j)$. So either the maximal value $c(v_j)$ decreases, or the number of vertices $v_i$ such that $c(v_i)$ is maximal decreases and we may continue by induction.
\end{proof}

\begin{lemma}\label{separated}
	Let $X $ be a scheme of finite type over a field $\kk$. If all pairs of points $p,q\in X $ are contained in an open separated subscheme $U\subset X $, then $X $ is separated. 
\end{lemma}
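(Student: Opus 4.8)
The plan is to use the separatedness criterion via the diagonal morphism. Recall that a scheme $X$ over $\kk$ is separated if and only if the diagonal morphism $\Delta\colon X\to X\times_\kk X$ is a closed immersion. Since $X$ is of finite type over $\kk$, the diagonal is always a locally closed immersion (more precisely, an immersion), so the content of the criterion reduces to showing that the image $\Delta(X)$ is \emph{closed} in $X\times_\kk X$. Thus the whole proof comes down to verifying that the diagonal is closed under the hypothesis that every pair of points lies in a common open separated subscheme.

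First I would reduce the problem to a statement about closed points of the image. To show $\Delta(X)$ is closed in $X\times_\kk X$, it suffices to show that its complement is open, or equivalently that $\Delta(X)$ contains all of its specializations; since $X\times_\kk X$ is of finite type over $\kk$, it suffices to check that $\Delta(X)$ is stable under specialization (a constructible set that is stable under specialization is closed). Alternatively, and more concretely, I would cover $X\times_\kk X$ by open affines or by products of the separated opens and check closedness locally on the target.

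The key step is the following local computation. Let $z\in X\times_\kk X$ be a point in the closure of $\Delta(X)$, and let $p=\pr_1(z)$, $q=\pr_2(z)$ be its two projections to $X$. By hypothesis there is an open separated subscheme $U\subset X$ containing both $p$ and $q$. Then $U\times_\kk U$ is an open subscheme of $X\times_\kk X$ containing $z$, and since $U$ is separated, the diagonal $\Delta_U\colon U\to U\times_\kk U$ is a closed immersion, so $\Delta(U)=\Delta_U(U)$ is closed in $U\times_\kk U$. The crucial observation is that $\Delta(X)\cap(U\times_\kk U)=\Delta(U)$: a point of $\Delta(X)$ lying in $U\times_\kk U$ has both projections in $U$ and equal (as points with the same residue data on the diagonal), hence comes from a point of $U$. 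Therefore $\Delta(X)$ is closed in the open neighborhood $U\times_\kk U$ of $z$, which forces $z\in\Delta(X)$. Since $z$ was an arbitrary point of the closure, $\Delta(X)$ is closed, and $X$ is separated.

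The main obstacle I anticipate is the verification of the set-theoretic identity $\Delta(X)\cap (U\times_\kk U)=\Delta(U)$ at the scheme-theoretic level and, relatedly, ensuring that the open sets $U\times_\kk U$ genuinely cover a neighborhood of every point of $\overline{\Delta(X)}$ rather than merely of $\Delta(X)$ itself. For the latter I use precisely the hypothesis that \emph{every} pair of points — including the projections $p,q$ of a limit point $z$, which need not themselves be equal — lies in a common separated open; this is exactly why the hypothesis is phrased for arbitrary pairs and not just for diagonal pairs. Care is also needed because $X$ is only assumed to be a scheme of finite type, not necessarily reduced or irreducible, so I would phrase the argument purely in terms of underlying topological spaces and the closed-immersion property of $\Delta_U$, avoiding any appeal to properties special to varieties.
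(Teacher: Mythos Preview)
Your proof is correct and follows essentially the same approach as the paper: both arguments exploit the identity $\Delta(X)\cap(U\times_\kk U)=\Delta(U)$ together with the closedness of $\Delta(U)$ in $U\times_\kk U$, the only cosmetic difference being that the paper picks a point in the complement of $\Delta(X)$ and exhibits an open neighborhood missing the diagonal, whereas you pick a point in the closure of $\Delta(X)$ and show it lies in $\Delta(X)$. Your side remarks about constructibility and specialization are unnecessary detours, but the core argument in your third paragraph matches the paper's proof.
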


\begin{proof}
	We have to show that the diagonal $\Delta_{X }\subset X \times X $ is closed in $X \times X $. Let $x\in X \times X \setminus \Delta_{X }$ and denote by $p$ and $q\in X $ the image of the first and second projection respectively. Let $U \subset X $ be an open separated subscheme containing $p$ and $q$. Since $U $ is separated, the diagonal $\Delta_{U }=\Delta_{X }\cap (U \times U )$ is closed in $U \times U $. Hence $(U \times U )\setminus \Delta_{U }$ is open in $U \times U $ and hence also in $X \times X $, in particular $(X \times X )\setminus \Delta_{X }$ is open.
\end{proof}


\begin{proof}[Proof of Theorem~\ref{thm:CCcat0}]
	By Lemma~\ref{simplyconnected_l}, it remains to show that the links are flag. Let $v=[(A ,\varphi)]$ be a vertex and let $\{v, v_1\}, \{v, v_2\},\dots, \{v, v_n\}$ be edges departing from $v$ that are pairwise contained in a square. We need to show that $v, v_1,\dots, v_n$ are contained in a cube. 
	
	We choose representatives $(A_{i},\varphi_i)$ for the $v_i$. Up to relabeling we may assume that there is a $m$ such that the edges $\{v, v_1\}, \{v, v_2\},\dots, \{v, v_m\}$ are oriented away from $v$, while the edges $\{v, v_{m+1}\}, \dots, \{v, v_n\}$ are oriented towards $v$. In particular,  for $m+1\leq i\leq n$ the exceptional loci of the maps $\varphi_i^{-1}\varphi\colon A \dashrightarrow A_{i}$ are of codimension $>l+1$.

	We now construct a vertex $w$ that dominates the vertices $v_{m+1},\dots, v_n$, by { glueing} together $A_{m+1},\dots, A_{n}$ with the help of the markings. By assumption, for all $r,s\geq m+1$, the vertices $v, v_r, v_s$ are contained in a square. Denote the fourth vertex in that square by $v_{rs}=[(A_{rs}, \varphi_{rs})]$. After removing finitely many loci of codimension $>\l+1$ from the $A_{r}$ and $A_{s}$ for all $r,s\geq m+1$ we may assume that the maps $\varphi_{rs}^{-1}\varphi_{r}\colon A_{r}\dashrightarrow A_{rs}$ and $\varphi_{rs}^{-1}\varphi_{s}\colon A_{s}\dashrightarrow A_{rs}$ are {open immersions} for all $r,s\geq m+1$. 
	For all $r,s\geq m+1$ denote by $U_{rs}\subset A_{r}$ the maximal open dense subsets such that $\varphi_s^{-1}\varphi_r|_{U_{rs}}\colon U_{rs}\to U_{sr}$ is an isomorphism. 
	After possibly removing finitely many loci of codimension $>\l+1 $ from the $A_{rs}$ we may moreover assume that $A_{rs}=\varphi_{rs}^{-1}\varphi^{}_r(A_{r})\cup\varphi_{rs}^{-1}\varphi_s(A_{s})$ for all $r,s\geq m+1$. 
	In other words, $A_{rs}$ is the variety obtained by glueing together $A_{r}$ and $A_{s}$ along $U_{rs}\subset A_{r}$ and $U_{sr}\subset A_{s}$ through the isomorphism $\varphi_s^{-1}\varphi^{}_r|_{U_{rs}}\colon U_{rs}\to U_{sr}$. 

	Consider now the integral $\kk$-scheme $B $ of finite type given by glueing together the varieties $A_{m+1},\dots, A_{n}$ along the isomorphisms $\varphi_s^{-1}\varphi_r^{}|_{U_{rs}}\colon U_{rs}\to U_{sr}$. The glueing maps satisfy the conditions of  the glueing lemma (see for example \cite[Section 2.3, Lemma~3.33]{Liu}), so we indeed obtain a $\kk$-scheme, which is by construction irreducible and of finite type.
	 Again by the glueing lemma,  we can naturally identify the varieties $A_{r}$ and $A_{rs}$ as open subschemes of $B $. Now, for any pair of points $p,q\in B$ there exist $r,s\geq m+1$ such that $p$ is contained in $A_{r}$ and $q$ is contained in $A_{s}$. In particular, $p$ and $q$ are contained in $A_{rs}$. Lemma~\ref{separated} implies now that $B$ is separated, so $B$ is indeed a  variety. Note as well that the complements of the $A_{r}$ in $B$ are of codimension $\l+1$. Let $\psi_{B}\colon B \dashrightarrow X $ be the  isomorphism in codimension $\l$ whose restriction to $A_{r}$ is $\varphi_r$. Hence, our vertex $w$ is $[(B, \psi_B)]$.
	We conclude using Lemma \ref{lemma_v_dominates_cube} because $w$ dominates all the vertices $v,v_1,\dots,v_n$.
\end{proof}

\subsection{Some remarks on the geometry of $\CC^\l(X)$}
In this subsection we will show that for normal varieties $\CC^0(X)$ consists only of one single cube (of infinite dimension). If $\dim(X)\geq 3$, this is not the case anymore for $\CC^\l(X)$ if $\l>0$.

\begin{lemma}\label{graphlemma}
	Let $X_1,\dots, X_n , Z$ be normal varieties of dimension $d$ over a field $\kk$ and let $\varphi_i\colon X_{i}\dashrightarrow Z$ be birational maps. Then there exists a complete normal variety $Y $ and birational maps $\psi_i\colon Y \dashrightarrow X_{i}$ such that the irreducible components of the exceptional locus of $\psi_i^{-1}$ have codimension $>1$ for all $i$.
\end{lemma}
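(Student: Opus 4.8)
The plan is to construct the variety $Y$ by resolving the indeterminacies of all the birational maps $\varphi_i$ simultaneously, using the given normal variety $Z$ as a common target. The key observation is that each $\varphi_i \colon X_i \dashrightarrow Z$ is a birational map between normal varieties, so by standard resolution one can find a normal variety $W_i$ together with birational morphisms $\alpha_i \colon W_i \to X_i$ and $\beta_i \colon W_i \to Z$ such that $\varphi_i = \beta_i \alpha_i^{-1}$. First I would take such a common resolution, and then dominate \emph{all} the $W_i$ at once: choose a normal variety $Y$ equipped with birational morphisms $\gamma_i \colon Y \to W_i$ so that $Y$ dominates every $X_i$ and $Z$ simultaneously (for instance, by taking the normalization of the closure of the diagonal image of a common open dense subset inside $\prod_i W_i$, or inductively by repeatedly resolving the graph). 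Since resolving into the $X_i$ only involves blow-down morphisms, after possibly replacing $Y$ by a further completion we may assume $Y$ is complete and normal. Set $\psi_i \coloneqq \alpha_i \gamma_i \colon Y \to X_i$, which is a birational morphism.

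The substance of the statement is the codimension condition on the exceptional locus of $\psi_i^{-1}$, not of $\psi_i$. Here I would use the following standard fact: if $\psi_i \colon Y \to X_i$ is a birational \emph{morphism} of normal varieties, then $\psi_i^{-1}$ is an isomorphism in codimension one, i.e.\ the exceptional locus of $\psi_i^{-1}$ has codimension $>1$ in $X_i$. The reason is that $\psi_i$ contracts divisors in $Y$ but cannot create new divisors in $X_i$: every prime divisor of $X_i$ has a unique birational transform in $Y$ (its strict transform), since $\psi_i$ is a morphism defined everywhere, so no divisor of $X_i$ can lie in the locus where $\psi_i^{-1}$ fails to be a local isomorphism. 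More precisely, because $X_i$ is normal, $\psi_i^{-1}$ is defined away from a subset of codimension $\geq 2$ (a birational map from a normal variety is regular outside codimension two when the source is complete, or one appeals to the valuative criterion divisor by divisor). This is exactly the assertion that the irreducible components of the exceptional locus of $\psi_i^{-1}$ have codimension $>1$.

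Concretely, the steps I would carry out are: (1) for each $i$, resolve $\varphi_i$ into morphisms through a normal $W_i$; (2) construct a single normal variety $Y$ dominating all the $W_i$ by taking an appropriate normalized graph closure, and pass to a complete model so that $Y$ is complete and normal; (3) define $\psi_i$ as the composite morphism $Y \to W_i \to X_i$; (4) invoke the fact that a birational morphism of normal varieties is an isomorphism in codimension one on the target, giving the required codimension bound on $\exc(\psi_i^{-1})$.

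The main obstacle I expect is step (2): arranging a \emph{single} $Y$ that dominates all the $X_i$ (hence all the $W_i$) by morphisms simultaneously, while keeping $Y$ normal and complete. Dominating finitely many normal varieties birationally is not automatic in dimension $\geq 3$, where one lacks the clean blow-up factorization available for surfaces (Theorem~\ref{factorization}); the correct device is to take the normalization of the closure of the image of the common open dense subset under $(\gamma_1,\dots,\gamma_n)$ inside the product $\prod_i W_i$, which maps by projection to each factor as a birational morphism, and then complete. Once $Y$ is secured as a complete normal variety dominating each $X_i$ by a genuine morphism, the codimension statement in step (4) is the routine part, following from the normality of the $X_i$.
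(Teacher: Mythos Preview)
Your proposal is correct and follows essentially the same route as the paper: build $Y$ as a normalized graph closure and then invoke Zariski's main theorem to bound the codimension of $\exc(\psi_i^{-1})$. The paper's version is more direct: it reduces to $n=2$ by induction, takes completions $\overline{X_1},\overline{X_2}$ first, and lets $Y$ be the normalization of the closure of the graph of $\varphi_2^{-1}\varphi_1$ inside $\overline{X_1}\times\overline{X_2}$, so that $Y$ is automatically complete and the projections $\psi_i\colon Y\to\overline{X_i}$ are proper birational morphisms; your intermediate resolutions $W_i$ are unnecessary, and completing the $X_i$ up front avoids the wobble you flagged about passing to a completion of $Y$ after the fact.
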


\begin{proof}
	By induction, it is enough to show the claim for $n=2$. Let $\overline{X_{1}}$ and $\overline{X_{2}}$ be completions of $X_{1}$ and $X_{2}$ and let $\Gamma\subset \overline{X_{1}}\times \overline{X_{2}}$ be the graph of $\varphi_2^{-1}\varphi_1$. Define ${Y }$ to be the normalization of $\Gamma$ and let ${\psi_i}\colon {Y }\to \overline{X_{i}}$ be the projections. By Zariski's main theorem \cite[p. 150, Corollary 4.3]{Liu}, the irreducible components of the exceptional loci of the $\psi_i^{-1}$ have codimension $>1$ and so are their restrictions to $X_{i}$. Hence, the $\psi_i$ define birational maps from ${Y}$ to $X_{i}$ with the required properties.
\end{proof}

\begin{proposition}\label{onecube}
	Let $X $ be a normal variety. Every finite set of vertices in $\CC^0(X )$ is contained in a single cube.
\end{proposition}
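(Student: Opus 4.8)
The plan is to reduce the statement to the previously established dominating machinery. Given a finite set of vertices $v_1=[(A_1,\varphi_1)],\dots,v_n=[(A_n,\varphi_n)]$ in $\CC^0(X)$, the goal is to produce a single vertex $w$ that dominates all of them, and then invoke Lemma~\ref{lemma_v_dominates_cube} to conclude that $w,v_1,\dots,v_n$ are contained in a single cube. Since $\CC^0(X)$ involves isomorphisms in codimension $0$, i.e.\ arbitrary birational maps, the relevant dominating vertex should be a variety mapping birationally to each $A_i$ with exceptional locus of the inverse of codimension $>1$ --- which is exactly the output of Lemma~\ref{graphlemma}.

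First I would replace $X$ by its normalization so that all varieties in sight can be taken normal; since we work in $\CC^0(X)$ this does not change the complex. I would then apply Lemma~\ref{graphlemma} to the normal varieties $A_1,\dots,A_n$ (together with $Z=X$) to obtain a complete normal variety $Y$ with birational maps $\psi_i\colon Y\dashrightarrow A_i$ whose inverses have exceptional loci of codimension $>1$. Setting $\psi\coloneqq\varphi_1\psi_1\colon Y\dashrightarrow X$, the pair $(Y,\psi)$ defines a vertex $w$ of $\CC^0(X)$, since $\psi$ is in particular an isomorphism in codimension $0$. The key point is that each $\psi_i$ realizes $w$ as dominating $v_i$: because the exceptional locus of $\psi_i^{-1}$ has codimension $>1$, the map $\psi_i$ identifies $v_i$ with a vertex of the form $[(Y\setminus D_i,\psi|_{Y\setminus D_i})]$ for a closed subset $D_i\subset Y$ of codimension $>1$ (a finite union of irreducible subvarieties of codimension $1$), which is precisely the condition that $w$ dominates $v_i$ in $\CC^0(X)$.

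Once $w$ dominates $v_1,\dots,v_n$, Lemma~\ref{lemma_v_dominates_cube} directly yields a cube containing $w,v_1,\dots,v_n$, and in particular all the $v_i$ lie in a single cube. The main obstacle is the bookkeeping in the previous paragraph: verifying that ``$\psi_i^{-1}$ has exceptional locus of codimension $>1$'' translates correctly into the combinatorial notion of domination used in the definition of $\CC^0(X)$, namely that $v_i$ is obtained from $w$ by removing a finite union of irreducible subvarieties of codimension $1$ (here $\l=0$, so codimension $\l+1=1$). This requires checking that the indeterminacy of $\psi_i$ causes no trouble --- which is where completeness of $Y$ and normality are used, via Zariski's main theorem as already invoked in Lemma~\ref{graphlemma} --- and that the complement of the image of the common open dense locus is of pure codimension $1$ up to discarding lower-dimensional pieces.
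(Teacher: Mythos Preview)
Your approach matches the paper's: both apply Lemma~\ref{graphlemma} to produce a normal complete $Y$ with birational maps $\psi_i\colon Y\dashrightarrow A_i$ whose inverses have exceptional locus of codimension $>1$, and then use $Y$ to exhibit a single vertex dominating all the $v_i$. The only difference is cosmetic: the paper writes out the cube explicitly (with top vertex the union $A=\bigcup_i\psi_i^{-1}(U_i)\subset Y$ and edges given by the codimension-$1$ components of $A\setminus\bigcap_i\psi_i^{-1}(U_i)$), while you invoke Lemma~\ref{lemma_v_dominates_cube} once domination is established.

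There is one slip to correct. You write that $v_i=[(Y\setminus D_i,\psi|_{Y\setminus D_i})]$ ``for a closed subset $D_i\subset Y$ of codimension $>1$ (a finite union of irreducible subvarieties of codimension $1$)''; these two descriptions contradict each other. The codimension-$>1$ locus is the exceptional locus of $\psi_i^{-1}$ inside $A_i$: this is what allows you to replace the representative $(A_i,\varphi_i)$ by $(U_i,\varphi_i|_{U_i})\cong(\psi_i^{-1}(U_i),\psi|_{\psi_i^{-1}(U_i)})$ without changing the vertex. The set $D_i=Y\setminus\psi_i^{-1}(U_i)$, on the other hand, typically \emph{does} have codimension-$1$ components, and it is precisely those components that you remove one at a time to witness that $w$ dominates $v_i$; if $D_i$ really had codimension $>1$ you would get $w=v_i$ and there would be nothing left to prove. (Also, $X$ is already normal by hypothesis, so your opening sentence about normalizing $X$ is superfluous; the normality that matters for invoking Lemma~\ref{graphlemma} is that of the representatives $A_i$.)
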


\begin{proof}
	Choose representatives $(A_{i},\varphi_i)$ of the vertices $v_i$. By Lemma~\ref{graphlemma}, there exists a complete normal variety $Y $ and birational maps $\psi_i\colon Y\dashrightarrow A_{i}$ such that the irreducible components of the exceptional locus of the $\psi_i^{-1}$ have codimension $>1$ for all $i$. Let $U_{i}\subset A_{i}$ be the open dense subset, on which $\psi_i^{-1}$ is an isomorphism. Since $A_{i}\setminus U_{i}$ is of codimension $>1$, we have $[(A_{i}, \varphi_i)]=[(U_{i}, \varphi_i|_{U_{i}})]=[(\psi_i^{-1}(U_{i}), \varphi_i\psi_i|_{\psi_i^{-1}(U_{i})})]$.

	 Now, let $A=\psi_1^{-1}(U_{1})\cup\dots\cup\psi_n^{-1}(U_{n})$ be the union and $B=\psi_1^{-1}(U_{1})\cap\dots\cap\psi_n^{-1}(U_{n})$ the intersection in $Y $ and let $\psi\colon A\dashrightarrow X $ be the birational transformation that restricts to $\varphi_j\psi_j$ on every $\psi_j^{-1}(U_{j})$. Let $E_1,\dots, E_r$ be the irreducible components of $A\setminus B$ of pure codimension $1$. This implies that the irreducible components of pure codimension $1$ of $A\setminus \psi_i^{-1}(U_{i})$ are contained in $\{E_1,\dots, E_{r}\}$. After possibly removing some closed subsets of codimension $>1$ from the $\psi_i^{-1}(U_{i})$, we may assume that $\psi_i^{-1}(U_{i})=A\setminus\{E_{i_1}\cup\dots E_{i_m}\}$. It follows, by the definition of the $n$-cubes that the vertices $v_1,\dots, v_n$ are contained in the $r$-cube defined by $(A,\psi)$ and the irreducible subvarieties $E_1,\dots, E_r\subset A$ of codimension~$1$. 
	 \end{proof}
 
As a direct consequence of Proposition~\ref{onecube}, we obtain again that $\CC^0(X )$ is $\CAT(0)$. Indeed, Proposition~\ref{onecube} implies directly that $\CC^0(X)$ is simply connected and flag.

\begin{remark}\label{rmq_plusieurs_cubes}
	Proposition~\ref{onecube} suggests that the geometry of $\CC^0(X )$ is easier to understand than the one of $\CC^\l(X )$ for $\l>0$. Let us remark that for $\l>0$ the cube complex $\CC^\l(X)$ doesn't have the property anymore that every finite set of vertices is contained in one cube. We will illustrate this with the example of the \emph{Atiyah flop}. Indeed, let $V$ be the hypersurface in $\P^4$ given by the equation $\{xy=zw\}$ and denote by $\pi^{}_0\colon Z\to V$ the blow-up of the origin in $V$. The exceptional divisor $E$ of this blow-up is isomorphic to $\P_\kk^1\times \P_\kk^1$ and we can contract any of its two rulings to a line. Let us denote these two contractions by $\pi^{}_{L}\colon Z\to X $ and $\pi^{}_{L'}\colon Z\to X' $ and let $L=\pi^{}_L(E)$, $L'=\pi^{}_{L'}(E)$. Since $f:=\pi^{}_{L'}\pi_L^{-1}\colon X\dashrightarrow X'$ induces an isomorphism between the complement $U$ of $L$ in $X$ and the complement of $L'$ in $X'$, it is a pseudo-isomorphism, called the Atiyah flop. Define the following vertices $v_1=[(X , \pi^{}_0\pi_{L}^{-1})]$, $v_2=[(X' , \pi^{}_0\pi_{L'}^{-1})]$ and $v_3=[(U , \pi^{}_0{\pi_{L}^{-1}}\mid_{ U})]$ in $\CC^1(V)$. We claim now that $v_1, v_2$, and $v_3$ can not be contained in a cube. Let us assume, for a contradiction, that there exists a cube in $\CC^1(V)$ containing $v_1, v_2$, and $v_3$. Since the vertex $v_3$ is connected to both $v_1$ and $v_2$ by an edge, this implies in particular that $v_1, v_2,$ and $v_3$ are contained in a square. More precisely, there exists a vertex $v_0$ that is connected by an edge to $v_1$ and $v_2$ such that the vertices $v_0, v_1, v_2, v_3$ form a square. Hence, there exist representatives of each of the vertices $(Y ,\varphi)\in v_0$, $(Y_{1},\varphi\mid_{ Y_{1}})\in v_1$, $(Y_{2},\varphi\mid_{Y_{2}})\in v_2$ and $(Y_{3},\varphi\mid_{ Y_{3}})\in v_3$, and two irreducible curves $C_1,C_2\subset Y $ such that $Y_{i}=Y\setminus C_i$ for $i\in\{1,2\}$ and $Y_{3}=Y\setminus \{C_1\cup C_2\}$. So there exist birational transformations $\psi\colon X\dashrightarrow Y_{1}$ and $\psi'\colon X'\dashrightarrow Y_{2}$ that are isomorphisms in codimension $2$, i.e., there exist finitely many points $\{p_1,\dots, p_n\}$ of $X $ such that $\psi$ induces a morphism $X\setminus\{p_1,\dots, p_n\}\to Y_{1}\subset Y$ as well as finitely many points $\{q_1,\dots, q_m\}$ such that $\psi'$ induces a morphism $X'\setminus \{q_1,\dots, q_m\}\to Y_{2}\subset Y$. Moreover, $\varphi\psi=\pi^{}_0\pi_{L}^{-1}$ and $\varphi\psi'=\pi^{}_0\pi_{L'}^{-1}$. Consider the variety $Z':=Z\setminus\{\pi_L^{-1}(\{p_1,\dots, p_n\})\cup \pi_{L'}^{-1}(\{q_1,\dots, q_m\}) \}$ and the divisor $E':=E\setminus\{\pi_L^{-1}(\{p_1,\dots, p_n\})\cup \pi_{L'}^{-1}(\{q_1,\dots, q_m\})\}$ on $Z'$. By definition, the two morphisms $\psi\pi^{}_{L}\mid_{{Z'}}$ and $\psi'\pi^{}_{L'}\mid_{{Z'}}$ coincide on $Z'\setminus E'$, but $\psi\pi^{}_{L}\mid_{{Z'}}(E')$ is an open dense set of $C_1$, whereas $\psi'\pi^{}_{L'}\mid_{{Z'}}(E')$ is an open dense set of $C_2$, which contradicts the fact that $Y$ is separated.
\end{remark}

\begin{remark}
	If $S$ is a regular projective surface, then the blow-up complex is a subcomplex of $\CC^{0}(S)$. The vertices of the blow-up complex can be identified with the vertices in $\CC^0(S)$ of the form $[(S',\varphi)]$, where $S'$ is a regular projective surface and $2^n$ vertices of this form define an $n$-cube in $\CC^0(S)$ if they define an $n$-cube in the blow-up complex. 
	 However, this injection of the blow-up complex into $\CC^0(S)$ is not 
	 an isometric embedding. 
\end{remark}

\subsection{Hyperplanes}{Let $X$ be a variety over a field $\kk$.}
Fix a birational transformation $f\in \Bir(X )$.
For any $1\leq \l\leq \dim(X )$ we denote by $\exc^\l(f)$ the set of irreducible components of the exceptional locus of $f$ of codimension $\l$. 

{An edge of $\CC^\l(X )$ is given by removing an irreducible subvariety  $H$ of codimension $\l+1$ of a marked variety $(A,\varphi)$ that represents the vertex of  $\CC^\l(X )$. We denote this edge by $(A,\varphi, H)$ and by $[(A,\varphi, H)]$ the hyperplane dual to its equivalence class.}

\begin{lemma}\label{lemma_same_hyperplane}
Two triples $(A,\varphi, H)$ and $(B,\psi, D)$ correspond to the same hyperplane if and only if $D$ is not contained in the exceptional locus of $\varphi^{-1}\psi$ and $H$ is the strict transform $\varphi^{-1}\psi(D)$ of $D$ under $\varphi^{-1}\psi$.
\end{lemma}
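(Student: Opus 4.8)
The plan is to introduce, for two edges $(A,\varphi,H)$ and $(B,\psi,D)$, the auxiliary relation
\[
(A,\varphi,H)\approx (B,\psi,D)\ :\Longleftrightarrow\ D\not\subset\exc(\varphi^{-1}\psi)\ \text{ and }\ H=\varphi^{-1}\psi(D),
\]
which is exactly the right-hand condition of the statement, and to prove that $\approx$ is an equivalence relation on edges that coincides with ``being dual to the same hyperplane''. Writing $g=\varphi^{-1}\psi\colon B\dashrightarrow A$, the condition $D\not\subset\exc(g)$ says $g$ is a local isomorphism near the generic point $\eta_D$ of $D$, and $H=g(D)$ says $g(\eta_D)=\eta_H$; hence $g^{-1}$ is a local isomorphism near $\eta_H$ with $g^{-1}(\eta_H)=\eta_D$. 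I would first record that $\approx$ is reflexive, symmetric (pass to $g^{-1}$), and transitive: for a third edge $(C,\chi,E)$ with $\psi^{-1}\chi(E)=D$ and $E\not\subset\exc(\psi^{-1}\chi)$, the composition $\varphi^{-1}\chi=(\varphi^{-1}\psi)(\psi^{-1}\chi)$ is a local isomorphism near $\eta_E$, being a composite of local isomorphisms near the relevant generic points, and strict transforms compose, whence $H=\varphi^{-1}\chi(E)$.

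For the implication ``same hyperplane $\Rightarrow\approx$'', recall that the equivalence class of an edge is by definition the transitive closure of the relation ``being opposite edges of a $2$-cube''. Since $\approx$ is an equivalence relation, it suffices to check that two opposite edges of a square are $\approx$-related. A square is given by a dominating vertex $[(A,\varphi)]$ together with two distinct irreducible subvarieties $V_1,V_2\subset A$ of codimension $\l+1$, its four vertices being $[(A,\varphi)]$, $[(A\setminus V_1,\varphi|_{A\setminus V_1})]$, $[(A\setminus V_2,\varphi|_{A\setminus V_2})]$ and $[(A\setminus(V_1\cup V_2),\varphi|)]$. The two edges parallel to the $V_1$-direction are $(A,\varphi,V_1)$ and $(A\setminus V_2,\varphi|_{A\setminus V_2},V_1\setminus V_2)$; here the relevant map is the open immersion $A\setminus V_2\hookrightarrow A$, whose exceptional locus is empty and under which the strict transform of $V_1\setminus V_2$ is $V_1$. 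Thus the opposite edges are $\approx$-related, and chaining through the squares gives the implication.

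For the converse ``$\approx\Rightarrow$ same hyperplane'' I would avoid gluing $A$ and $B$ (which can fail to be separated, as the Atiyah flop of Remark~\ref{rmq_plusieurs_cubes} shows) and instead descend both edges to the common locus where $g$ is an isomorphism. Choose an open $U\subset B$ containing $\eta_D$ on which $g$ restricts to an isomorphism onto $W:=g(U)\subset A$, so that $\eta_H\in W$ and $g(D\cap U)=H\cap W$; both $A\setminus W$ and $B\setminus U$ have codimension $>\l$. The finitely many codimension-$(\l+1)$ components $F_1,\dots,F_k$ of $A\setminus W$ are all distinct from $H$ (since $\eta_H\in W$), so removing them one at a time realises, by the square argument above, a chain of $\approx$-related edges from $(A,\varphi,H)$ to $(W,\varphi|_W,H\cap W)$; the endpoint vertex is $[(W,\varphi|_W)]$ because $W$ and $A\setminus\bigcup F_i$ differ only in codimension $>\l+1$. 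Performing the symmetric reduction for $(B,\psi,D)$ yields the edge $(U,\psi|_U,D\cap U)$ at the vertex $[(U,\psi|_U)]$. Finally $(\varphi|_W)^{-1}\psi|_U=g|_U\colon U\to W$ is an isomorphism, so $[(W,\varphi|_W)]=[(U,\psi|_U)]$ and, as $g$ carries $D\cap U$ onto $H\cap W$, the two descended edges coincide; hence the original edges are equivalent.

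The main obstacle is this last direction: one must ensure that the intermediate square moves are legitimate (each removed subvariety is distinct from the tracked one and remains irreducible of codimension $\l+1$) and, crucially, that restricting to the common isomorphism locus $W\cong U$ produces literally the same edge. It is precisely the hypothesis $H=\varphi^{-1}\psi(D)$ with $D\not\subset\exc(\varphi^{-1}\psi)$ — that the tracked subvariety is neither contracted nor flopped — that keeps us inside an honest isomorphism and sidesteps the separatedness failure obstructing the placement of flopped vertices in a common cube.
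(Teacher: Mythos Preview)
Your proof is correct and follows essentially the same strategy as the paper's: for the forward direction both arguments check that opposite edges of a square satisfy the condition and then chain through, and for the backward direction both descend to the common open locus where $\varphi^{-1}\psi$ is an isomorphism. Your version is more explicit than the paper's --- you spell out that $\approx$ is an equivalence relation and you construct the square-chain from $(A,\varphi,H)$ down to $(W,\varphi|_W,H\cap W)$ by removing the codimension-$(\l+1)$ components of $A\setminus W$ one at a time, a step the paper simply asserts.
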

\begin{proof}
	First assume that $(A,\varphi, H)$ and $(B,\psi, D)$ define the same hyperplane. Let $e$ and $f$ be the edges defined by $(A,\varphi, H)$ and $(B,\psi, D)$, respectively. By definition, this means that there exists a sequence of edges $e_0,\dots, e_n$ such that $e_0=e$, $e_n=f$ and for all $i$, the edges $e_i$ and $e_{i+1}$ are opposite edges of a $2$-cube. If $n=1$, then $e$ and $f$ are opposite edges of a $2$-cube. It follows that $D$ is not contained in the exceptional locus of $\varphi^{-1}\psi$ and $H$ is the strict transform $\varphi^{-1}\psi(D)$ of $D$ under $\varphi^{-1}\psi$. For $n>1$ we obtain inductively that $D$ is not contained in the exceptional locus of $\varphi^{-1}\psi$ and $H$ is the strict transform $\varphi^{-1}\psi(D)$ of $D$. 
	
	On the other hand, assume that for two given triples $(A,\varphi, H)$ and $(B,\psi, D)$, the subvariety $D$ is not contained in the exceptional locus of $\varphi^{-1}\psi$ and $H$ is the strict transform $\varphi^{-1}\psi(D)$ of $D$ under $\varphi^{-1}\psi$. Let $U_{1}\subset A$ and $U_{2}\subset B$ be the biggest open dense subsets such that $\varphi^{-1}\psi$ induces an isomorphism between $U_{2}$ and $U_{1}$. {Note that $U_{1}$ intersects $H$ in a dense open subset and $U_{2}$ intersects $D$ in a dense open subset. Therefore, the two hyperplanes $[(U_{1}, \varphi|_{U_{1}},H\cap U_{1})]$ and $[(U_{2},\psi|_{U_{2}},D\cap U_{2})]$ coincide. Moreover the triples $(A,\varphi, H)$ and $(U_{1},\varphi|_{U_{1}},H\cap U_{1})$ correspond to the same hyperplane, and the triplets $(B, \psi, D)$ and $(U_{2}, \psi|_{U_{2}}, D\cap U_{2})$ correspond to the same hyperplane. This finishes the proof.}
\end{proof}

\begin{lemma}\label{distancel}
	Let $v_1=[(A,\varphi)]$ and $v_2=[(B,\psi)]$ be two vertices in $\CC^\l(X )$. Then the following is true:

	 The distance between $v_1$ and $v_2$ equals the sum \[
	{\dist(v_1,v_2)=}\lvert\Exc^{\l+1}(\varphi^{-1}\psi)\rvert+\lvert\Exc^{\l+1}(\psi^{-1}\varphi)\rvert.
	\]
	 Moreover, every geodesic path joining these two vertices crosses exactly all the hyperplanes that correspond to removing the irreducible components of pure codimension $\l+1$ of the exceptional locus of $\psi^{-1}\varphi$ from $A$ and adding the irreducible components of pure codimension $\l+1$ of the exceptional locus of $\varphi^{-1}\psi$.
\end{lemma}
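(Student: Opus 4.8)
The strategy mirrors that of Lemma~\ref{lemma:comb_geodesic} in the surface case: I would construct an explicit path of the asserted length and then appeal to Theorem~\ref{theorem:combinatorial_geodesic} to certify both that it is geodesic and that it meets every separating hyperplane. Set $g\coloneqq\psi^{-1}\varphi\colon A\dashrightarrow B$. Since $v_1$ and $v_2$ are vertices of $\CC^\l(X)$, the markings $\varphi$ and $\psi$ are isomorphisms in codimension $\l$, and a routine check shows the same for their composition $g$; hence $\Exc(g)$ and $\Exc(g^{-1})$ have codimension $>\l$. Let $D_1,\dots,D_r$ be the irreducible components of pure codimension $\l+1$ of $\Exc(g)$ and $E_1,\dots,E_s$ those of $\Exc(g^{-1})$, so that $r=\lvert\Exc^{\l+1}(\psi^{-1}\varphi)\rvert$ and $s=\lvert\Exc^{\l+1}(\varphi^{-1}\psi)\rvert$.

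First I would build the path. Deleting $D_1,\dots,D_r$ from $A$ one at a time gives $r$ oriented edges terminating at $w=[(W,\varphi|_W)]$ with $W=A\setminus(D_1\cup\cdots\cup D_r)$; each deletion is a legitimate edge because, after removing the earlier $D_i$, the next one still restricts to a dense open subset of itself, hence to an irreducible subvariety of codimension $\l+1$ of the current open variety. Symmetrically, deleting $E_1,\dots,E_s$ from $B$ gives $s$ oriented edges from $v_2$ to $w'=[(W',\psi|_{W'})]$ with $W'=B\setminus(E_1\cup\cdots\cup E_s)$. The crux is that $w=w'$: by construction $g$ restricts to a birational map $W\dashrightarrow W'$ whose exceptional locus and inverse exceptional locus have both lost their codimension-$(\l+1)$ components, hence have codimension $>\l+1$, so $g|_W$ is an isomorphism in codimension $\l+1$ and the two vertices coincide. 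Reversing the $B$-side edges yields a path from $v_1$ to $v_2$ of length $r+s$.

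Next I would identify the crossed hyperplanes and show they are pairwise distinct. By Lemma~\ref{lemma_same_hyperplane} the $i$-th deletion crosses $[(A,\varphi,D_i)]$ and the $j$-th addition crosses $[(B,\psi,E_j)]$ (the intermediate open immersions do not alter the hyperplane class, again by Lemma~\ref{lemma_same_hyperplane}). The classes $[(A,\varphi,D_i)]$ are distinct since the $D_i$ are distinct irreducible subvarieties of $A$, and similarly for the $[(B,\psi,E_j)]$. A coincidence $[(A,\varphi,D_i)]=[(B,\psi,E_j)]$ would force, via Lemma~\ref{lemma_same_hyperplane}, that $E_j$ is \emph{not} contained in the exceptional locus of $\varphi^{-1}\psi=g^{-1}$, contradicting $E_j\in\Exc^{\l+1}(g^{-1})$. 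Thus all $r+s$ hyperplanes are distinct.

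Since the sequence of crossed hyperplanes has no repetition, Theorem~\ref{theorem:combinatorial_geodesic} shows the path is geodesic, giving $\dist(v_1,v_2)=r+s$, the asserted formula. That theorem also equates $\dist(v_1,v_2)$ with the number of hyperplanes separating $v_1$ and $v_2$; as our geodesic crosses exactly the $r+s$ distinct hyperplanes $\{[(A,\varphi,D_i)]\}\cup\{[(B,\psi,E_j)]\}$, each of which consequently separates the endpoints, these are precisely the separating hyperplanes. Because a geodesic crosses each separating hyperplane exactly once and crosses no others, every geodesic crosses exactly this set, which is the second assertion. I expect the main obstacle to be the verification that $w=w'$, namely that removing the codimension-$(\l+1)$ exceptional components on both sides upgrades $g$ to an isomorphism in codimension $\l+1$, together with the bookkeeping that keeps each intermediate deletion a genuine edge and leaves the hyperplane classes unchanged under restriction.
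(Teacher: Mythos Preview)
Your proof is correct and follows essentially the same approach as the paper: construct the explicit path by deleting the codimension-$(\l+1)$ exceptional components on each side, invoke Lemma~\ref{lemma_same_hyperplane} to check the crossed hyperplanes are pairwise distinct, and conclude via Theorem~\ref{theorem:combinatorial_geodesic}. The paper's version is terser---it simply asserts that the path exists and is geodesic ``because the resolution is minimal''---while you spell out the verification that $w=w'$ and the distinctness of the hyperplanes, but the underlying argument is the same.
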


\begin{proof}
Removing the irreducible components of pure codimension $\l+1$ of the exceptional locus of $\psi^{-1}\varphi$ from $A$ and then adding the irreducible components of pure codimension $\l+1$ of the exceptional locus of $\varphi^{-1}\psi$ gives us a path between $v_1$ and $v_2$. By Lemma~\ref{lemma_same_hyperplane} and because the resolution is minimal, Theorem~\ref{theorem:combinatorial_geodesic} implies that this path is geodesic. Moreover, any other geodesic segment joining these two vertices has to cross the same hyperplanes. This also implies that the distance between $v_1$ and $v_2$ equals $\lvert\Exc^{\l+1}(\varphi^{-1}\psi)\rvert+\lvert\Exc^{\l+1}(\psi^{-1}\varphi)\rvert$.
	
\end{proof}

 \begin{proposition}\label{prop:halfspace_Cl}
 	Consider the hyperplane $[(A,\varphi, H)]$. The set of vertices $[(B,\psi)]$ such that $H$ is contained in the exceptional locus of $\psi^{-1}\varphi$ determines the half-space $[(A,\varphi, H)]^-$. This characterization of half-spaces does not depend on the representative $(A,\varphi, H)$.
 \end{proposition}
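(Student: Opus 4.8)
The plan is to follow the proof of Proposition~\ref{prop:halfspace}, replacing Lemma~\ref{lemma:comb_geodesic} by its higher-codimensional analogue Lemma~\ref{distancel}. Let $E$ denote the set of vertices $[(B,\psi)]$ with $H\subseteq\Exc(\psi^{-1}\varphi)$; equivalently (as $H$ is irreducible and the exceptional locus is closed) the set for which the generic point $\eta_H$ of $H$ lies in $\Exc(\psi^{-1}\varphi)$. By the orientation convention the edge $(A,\varphi,H)$ is directed from $[(A,\varphi)]$ to $[(A\setminus H,\varphi|_{A\setminus H})]$, so that $[(A,\varphi)]\in[(A,\varphi,H)]^+$ and $[(A\setminus H,\varphi|_{A\setminus H})]\in[(A,\varphi,H)]^-$. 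Since $\Exc(\id)$ is empty we have $[(A,\varphi)]\in E^C$, whereas for the vertex $[(A\setminus H,\varphi|_{A\setminus H})]$ the corresponding map $A\dashrightarrow A\setminus H$ is the identity away from $H$ and undefined along $H$, so $\eta_H$ lies in its indeterminacy and hence in its exceptional locus, giving $[(A\setminus H,\varphi|_{A\setminus H})]\in E$. It therefore suffices to prove: \emph{if $v_1\in E$ and $v_2\in E^C$ then $[(A,\varphi,H)]$ separates $v_1$ and $v_2$.} Granting this, any $v\in E$ is separated from $[(A,\varphi)]\in E^C\cap[(A,\varphi,H)]^+$ and so lies in $[(A,\varphi,H)]^-$, while any $v\in E^C$ is separated from $[(A\setminus H,\varphi|_{A\setminus H})]\in E\cap[(A,\varphi,H)]^-$ and so lies in $[(A,\varphi,H)]^+$; since $E\sqcup E^C$ exhausts the vertices, this yields $E=[(A,\varphi,H)]^-$.

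I would first check that membership in $E$ is well defined. It is independent of the representative $(B,\psi)$ of the vertex because a second representative differs by an isomorphism in codimension $\l+1$, whose exceptional locus has codimension $>\l+1$ and thus cannot contain the codimension-$(\l+1)$ image point $\psi^{-1}\varphi(\eta_H)$; composing with such a map does not affect whether $\psi^{-1}\varphi$ is a local isomorphism at $\eta_H$. Independence of the representative $(A,\varphi,H)$ of the hyperplane — the last assertion of the statement — follows from Lemma~\ref{lemma_same_hyperplane}: a second representative $(A',\varphi',H')$ makes $g:=\varphi^{-1}\varphi'$ a local isomorphism near $\eta_{H'}$ with $g(\eta_{H'})=\eta_H$, and writing $\psi^{-1}\varphi'=(\psi^{-1}\varphi)\circ g$ shows that $\psi^{-1}\varphi'$ is a local isomorphism at $\eta_{H'}$ exactly when $\psi^{-1}\varphi$ is one at $\eta_H$.

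For the separation statement, fix representatives $v_1=[(B_1,\psi_1)]\in E$ and $v_2=[(B_2,\psi_2)]\in E^C$. As $\eta_H\notin\Exc(\psi_2^{-1}\varphi)$, the map $\psi_2^{-1}\varphi$ is a local isomorphism near $\eta_H$, so the strict transform $D:=\psi_2^{-1}\varphi(H)\subseteq B_2$ is irreducible of codimension $\l+1$ and, by Lemma~\ref{lemma_same_hyperplane}, $[(A,\varphi,H)]=[(B_2,\psi_2,D)]$. Factoring $\psi_1^{-1}\psi_2=(\psi_1^{-1}\varphi)\circ(\varphi^{-1}\psi_2)$: the map $\varphi^{-1}\psi_2$ is a local isomorphism near $\eta_D$ sending it to $\eta_H$, while $\eta_H\in\Exc(\psi_1^{-1}\varphi)$ because $v_1\in E$; hence $\eta_D\in\Exc(\psi_1^{-1}\psi_2)$, i.e. $D\subseteq\Exc(\psi_1^{-1}\psi_2)$. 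Finally $\psi_1^{-1}\psi_2$ is an isomorphism in codimension $\l$, so every component of $\Exc(\psi_1^{-1}\psi_2)$ has codimension $>\l$; an irreducible subvariety of codimension exactly $\l+1$ contained in this locus must therefore be one of its pure-codimension-$(\l+1)$ components. By Lemma~\ref{distancel} every geodesic from $v_1$ to $v_2$ crosses the hyperplane obtained by adding $D$, namely $[(B_2,\psi_2,D)]=[(A,\varphi,H)]$, which by Theorem~\ref{theorem:combinatorial_geodesic} is precisely the assertion that $[(A,\varphi,H)]$ separates $v_1$ and $v_2$.

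The step requiring the most care — and the main obstacle — is the last one: ensuring that $D$ is an irreducible \emph{component} of pure codimension $\l+1$ of $\Exc(\psi_1^{-1}\psi_2)$, not merely a subvariety of it, since only then does Lemma~\ref{distancel} produce the dual hyperplane. The resolution rests on the fact that all the maps involved are isomorphisms in codimension $\l$, so their exceptional loci sit in codimension $>\l$; this forces a codimension-$(\l+1)$ subvariety contained in such a locus to be a full component. I note also that, unlike the surface case in Proposition~\ref{prop:halfspace}, no auxiliary sub-case is needed here: the hypothesis $\eta_H\notin\Exc(\psi_2^{-1}\varphi)$ already guarantees that $\psi_2^{-1}\varphi$ is a local isomorphism near $\eta_H$, because indeterminacy points always belong to the exceptional locus.
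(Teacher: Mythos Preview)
Your proof is correct and follows essentially the same approach as the paper: show independence of the representative via Lemma~\ref{lemma_same_hyperplane}, then reduce to the separation claim and deduce it from Lemma~\ref{distancel}. The paper's argument is much terser --- it simply states ``This is a consequence of Lemma~\ref{distancel}'' --- whereas you carefully spell out why the strict transform $D=\psi_2^{-1}\varphi(H)$ is a genuine pure-codimension-$(\l+1)$ component of $\Exc(\psi_1^{-1}\psi_2)$ (using that all markings are isomorphisms in codimension $\l$), and you also add the well-definedness check with respect to the vertex representative $(B,\psi)$ and the identification of which half-space is $[(A,\varphi,H)]^-$, points the paper leaves implicit.
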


 \begin{proof}
 	 By Lemma \ref{lemma_same_hyperplane}, any representative $(A',\varphi',H')$ of $[(A,\varphi,H)]$ has the property that $H'$ is not contained in the exceptional locus of $\varphi^{-1}\varphi'$ and $H$ is the strict transform of $H'$ under $\varphi^{-1}\varphi'$.
 	We obtain that if $H'$ belongs to $\exc(\psi^{-1}\varphi')$ then $H$ belongs to $(\varphi^{-1}\varphi')\exc(\psi^{-1}\varphi')$, and so $H$ belongs to $\exc(\psi^{-1}\varphi'(\varphi^{-1}\varphi')^{-1})=\exc(\psi^{-1}\varphi)$. Reversing the roles of $\varphi$ and $\varphi'$ we obtain similarly the converse.

 	We denote by $E$ the set of vertices $[(B,\psi)]$ such that $H$ is contained in the exceptional locus of $\psi^{-1}\varphi$, and by $E^C$ its complement. It is enough to prove that any geodesic joining a vertex in $E$ with a vertex in $E^C$ crosses the hyperplane $[(A,\varphi, H)]$. This is a consequence of Lemma \ref{distancel}.
 \end{proof}

\subsection{Action of groups of pseudo-automorphisms in codimension $\l$}
For any $\l$, there is a faithful action of $\Psaut^\l(X)$ on $\CC^\l(X)$, defined in the following way: an element $f\in\Psaut^\l(X)$ maps a vertex $[(A,\varphi)]$ to the vertex $[(A, f\varphi)]$. It is straightforward to check that this action on the vertices is well-defined, preserves the cubes, and preserves the orientation. Consequently, as a corollary to the result of Frédéric Haglund (Proposition~\ref{prop_action_semisimple_hag}) we obtain the semi-simplicity of the action of $\Psaut^\l(X)$ on $\CC^\l(X)$:

\begin{proposition}\label{prop_semisimple_psaut}
	Every isometry of $\CC^\l(X )$ induced by an element $f$ of $\Psaut^\l(X )$ is either combinatorially elliptic or combinatorially loxodromic.
\end{proposition}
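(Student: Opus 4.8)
The plan is to reduce this statement to a direct application of Proposition~\ref{prop_action_semisimple_hag} (Haglund's result), which says that any isometry $f$ of a $\CAT(0)$ cube complex such that $f$ and all its iterates act without inversion is either loxodromic or elliptic. The strategy therefore has two pieces: first verify that the action of $\Psaut^\l(X)$ on $\CC^\l(X)$ is genuinely by isometries of a $\CAT(0)$ cube complex (so that Haglund applies at all), and second verify the no-inversion hypothesis. The first piece is already in hand: Theorem~\ref{thm:CCcat0} establishes that $\CC^\l(X)$ is $\CAT(0)$, and the paragraph introducing this proposition asserts that the formula $f\cdot[(A,\varphi)]=[(A,f\varphi)]$ defines a well-defined isometric action preserving the cubes and the orientation.

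**The key observation** is the remark following Proposition~\ref{fixedpoint}, which notes that if an isometry preserves an orientation then it \emph{and all its iterates} act without inversion. So the crux is that the $\Psaut^\l(X)$-action preserves the chosen orientation on $\CC^\l(X)$. First I would recall how the orientation is defined: an edge between $[(A,\varphi)]$ and $[(A\setminus D,\varphi|_{A\setminus D})]$ is oriented from the vertex with the larger variety towards the vertex obtained by removing a codimension-$(\l+1)$ subvariety. Since $f\in\Psaut^\l(X)$ acts by postcomposing the marking, $f$ sends the edge $(A,\varphi,D)$ to the edge $(A,f\varphi,D)$, i.e.\ it removes the \emph{same} subvariety $D$ from $A$ but changes only the marking. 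Hence the orientation (which is governed by whether one removes or adds a subvariety, equivalently by the codimension/inclusion relation between the underlying varieties) is preserved.

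**The main step** is thus to spell out why inversions cannot occur. An isometry $f$ inverts a hyperplane $[(A,\varphi,H)]$ if $f([(A,\varphi,H)]^+)=[(A,\varphi,H)]^-$. By Proposition~\ref{prop:halfspace_Cl}, the half-space $[(A,\varphi,H)]^-$ is characterized intrinsically as the set of vertices $[(B,\psi)]$ for which $H$ lies in the exceptional locus of $\psi^{-1}\varphi$; this characterization is orientation-respecting. Because $f$ preserves orientation, $f$ must send each $[e]^-$ to some $[e']^-$ and each $[e]^+$ to some $[e']^+$; in particular it can never swap the two half-spaces of a single hyperplane it stabilizes. The same argument applies verbatim to every iterate $f^n$, since all iterates lie in $\Psaut^\l(X)$ and hence also preserve the orientation. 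Therefore $f$ and all its iterates act without inversion.

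**I expect the only genuine subtlety** to be confirming that the orientation is truly $\Psaut^\l(X)$-invariant at the level of half-spaces, rather than merely at the level of individual oriented edges; but this follows cleanly from the representative-independent description of $[(A,\varphi,H)]^-$ in Proposition~\ref{prop:halfspace_Cl} together with the fact that postcomposition by $f$ replaces $\psi$ by $f^{-1}\psi$ in a way that leaves exceptional loci of the relevant composites unchanged. Once the no-inversion property is established for $f$ and all iterates, the conclusion is immediate from Proposition~\ref{prop_action_semisimple_hag}. Concretely I would write:

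\begin{proof}
	As observed above, the action of $\Psaut^\l(X)$ on $\CC^\l(X)$ preserves the orientation of the cube complex. Indeed, an element $f\in\Psaut^\l(X)$ sends an edge $(A,\varphi,H)$ to the edge $(A,f\varphi,H)$, so it does not change the underlying varieties and in particular preserves the relation ``being obtained by removing a subvariety of codimension $\l+1$'', which defines the orientation. As noted after Proposition~\ref{fixedpoint}, an orientation-preserving isometry and all of its iterates act without inversion. By Theorem~\ref{thm:CCcat0} the complex $\CC^\l(X)$ is $\CAT(0)$, so Proposition~\ref{prop_action_semisimple_hag} applies and shows that the isometry induced by $f$ is either combinatorially elliptic or combinatorially loxodromic.
\end{proof}
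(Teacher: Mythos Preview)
Your proposal is correct and follows essentially the same approach as the paper: the paper simply states (in the paragraph immediately preceding the proposition) that the action preserves the orientation, and then invokes Haglund's result (Proposition~\ref{prop_action_semisimple_hag}) as a corollary, without writing out a separate proof. Your write-up just makes explicit the ``straightforward to check'' steps the paper omits.

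One minor correction: the remark that an orientation-preserving isometry and all its iterates act without inversion appears \emph{after Proposition~\ref{prop_action_semisimple_hag}}, not after Proposition~\ref{fixedpoint}; you should adjust the cross-reference in your proof accordingly.
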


\subsection{Dynamics of exceptional loci}\label{subsection_dynamics_exceptional_loci}

For a transformation $f\in\Psaut^{\l}(X)$, we denote by
\[
\nu^{\l+1}(f)\coloneqq \lim_{n\to\infty}\frac{\lvert \exc^{\l+1}(f^n)\rvert}{n}
\]
the {\it dynamical number of the $(\l+1)$-exceptional locus}. This limit always exists, since the function $f\mapsto \lvert \exc^{\l+1}(f)\rvert$ is subadditive on $\Psaut^{\l}(f)$. The dynamical number of the $(\l+1)$-exceptional locus is invariant under conjugacy in $\Psaut^{\l}(X)$, since conjugation by a birational transformation $g$ changes the number of irreducible components of $\exc^{\l+1}(f)$ at most by a constant only depending on $g$ and $g^{-1}$.
 This number is inspired by the dynamical number of base points in the surface case. Nevertheless, it is a weaker notion in the sense that if $f\in\Bir(S )$, where $S $ is a regular projective surface over $\kk$, then $\nu^{1}(f)\leq \mu(f)$. For instance, the Hénon transformation $h\colon \p_\kk^2\dashrightarrow\p_\kk^2$ defined with respect to affine coordinates by $h(x,y)=(y,y^2+x)$ satisfies $\nu^{1}(f)=0$, whereas $\mu(f)=3$. 

In the same way as the blow-up complex gives a geometrical interpretation of the dynamical number of base points, the cube complex $\CC^{\l}(X )$ gives a geometrical interpretation of $\nu^{\l+1}(f)$ when $f\in \Psaut^{\l}(X )$.

Let us start with following lemma which is a direct consequence of \cite[Proposition 4, Corollary~5]{Liu-Sebag}. 

\begin{lemma}\label{lemme_composantes_irreductibles}
	Let $\kk$ be an algebraically closed field, let $X $ be a variety over $\kk$, and let $f\colon X \dashrightarrow X $ be a birational map that induces an isomorphism between dense open subvarieties $U $ and $V $ of $X $.
	The complements $X \setminus U $ and $X \setminus V $ have the same dimension, and the same number of irreducible components of maximal dimension.
\end{lemma}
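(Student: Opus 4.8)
The plan is to pass to the Grothendieck ring of varieties $K_0(\mathrm{Var}_{\kk})$ and read off the desired invariants from the class of a variety. The underlying intuition is the familiar point-counting picture: were $\kk$ a finite field $\F_q$, the isomorphism $U\cong V$ would force $\lvert U(\F_q)\rvert=\lvert V(\F_q)\rvert$, hence $\lvert (X\setminus U)(\F_q)\rvert=\lvert (X\setminus V)(\F_q)\rvert$ by additivity of point counts, and the Lang--Weil estimates would then recover both the dimension and the number of top-dimensional geometrically irreducible components of $X\setminus U$ and $X\setminus V$ from the leading term of the count. The Grothendieck-ring formulation of Liu--Sebag is the characteristic-free incarnation of exactly this principle over an algebraically closed field.

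Concretely, I would argue as follows. First, since $f$ restricts to an isomorphism $f|_{U}\colon U\xrightarrow{\sim} V$ of $\kk$-varieties, the two open sets have the same class, $[U]=[V]$, in $K_0(\mathrm{Var}_{\kk})$. Next, I would equip the closed complements $X\setminus U$ and $X\setminus V$ with their reduced induced structure and invoke the scissor (cut-and-paste) relations, which give $[X]=[U]+[X\setminus U]$ and $[X]=[V]+[X\setminus V]$. Subtracting these two identities and using $[U]=[V]$ yields $[X\setminus U]=[X\setminus V]$ in $K_0(\mathrm{Var}_{\kk})$. All of this is purely formal bookkeeping in the Grothendieck ring.

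The only non-formal input is the final step, which is precisely the cited content of Liu--Sebag: the class of a $\kk$-variety $W$ in $K_0(\mathrm{Var}_{\kk})$ determines both $\dim W$ and the number of irreducible components of $W$ of maximal dimension. (Because $\kk$ is algebraically closed, irreducibility is geometric, so this count is unambiguous.) Applying this to the equal classes $[X\setminus U]=[X\setminus V]$ gives simultaneously $\dim(X\setminus U)=\dim(X\setminus V)$ and the equality of the numbers of top-dimensional components, which is the assertion of the lemma. The main obstacle is concentrated entirely in this last invariance statement — i.e. that no relation in $K_0(\mathrm{Var}_{\kk})$ can alter the leading-dimensional data of a class — and this is exactly what Liu--Sebag's Proposition~4 and Corollary~5 supply, so the lemma follows directly from it.
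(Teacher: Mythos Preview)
Your proposal is correct and is precisely the argument the paper has in mind: the paper gives no proof of its own but states the lemma as a direct consequence of \cite[Proposition~4, Corollary~5]{Liu-Sebag}, and your Grothendieck-ring computation $[X\setminus U]=[X]-[U]=[X]-[V]=[X\setminus V]$ together with the Liu--Sebag invariance of dimension and top-component count is exactly how that consequence is obtained.
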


\begin{proposition}\label{lemme_dynexcp}
	Let $X $ be a variety over a field $\kk$.
	For all $f\in\Psaut^\l(X )$ we have $\l(f)=\nu^{\l+1}(f)+\nu^{\l+1}(f^{-1})$. 
	Moreover, if $\kk$ is algebraically closed, then $\l(f)=2\nu^{\l+1}(f)$.
\end{proposition}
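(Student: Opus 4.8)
The plan is to mirror the proof of Lemma~\ref{dynbasepts}, with the number of base points replaced by $\lvert\exc^{\l+1}(\cdot)\rvert$ and the blow-up complex replaced by $\CC^\l(X)$. The key computation is the displacement of the base vertex $p\coloneqq[(X,\id)]$ under the iterates of $f$. Since $f^n$ sends $p$ to $[(X,f^n)]$, applying Lemma~\ref{distancel} with $\varphi=\id$ and $\psi=f^n$ yields
\[
\dist(p,f^n(p))=\lvert\exc^{\l+1}(f^n)\rvert+\lvert\exc^{\l+1}(f^{-n})\rvert .
\]

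First I would prove $\l(f)\leq\nu^{\l+1}(f)+\nu^{\l+1}(f^{-1})$. By Proposition~\ref{prop_semisimple_psaut} the isometry induced by $f$ is elliptic or loxodromic, so $\l(f^n)=n\l(f)$ and hence $\dist(p,f^n(p))\geq\l(f^n)=n\l(f)$ for every $n$. Dividing the displayed formula by $n$ and letting $n\to\infty$ gives the inequality, since by definition $\lim_{n\to\infty}\lvert\exc^{\l+1}(f^{-n})\rvert/n=\nu^{\l+1}(f^{-1})$. For the reverse inequality I would fix a vertex $x\in\Min(f)$, set $K\coloneqq\dist(p,x)$, and use that $f^n$ is an isometry to estimate
\[
\dist(p,f^n(p))\leq\dist(p,x)+\dist(x,f^n(x))+\dist(f^n(x),f^n(p))=n\l(f)+2K,
\]
where $\dist(x,f^n(x))=n\l(f)$ because $x$ lies on an axis of $f$ (or is fixed in the elliptic case). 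Dividing by $n$ and taking the limit gives $\nu^{\l+1}(f)+\nu^{\l+1}(f^{-1})\leq\l(f)$, and combining the two inequalities proves the first assertion.

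For the case where $\kk$ is algebraically closed I would show that $\nu^{\l+1}(f)=\nu^{\l+1}(f^{-1})$, which upgrades the first formula to $\l(f)=2\nu^{\l+1}(f)$. Since $\Psaut^\l(X)$ is a group, each $f^n$ is an isomorphism in codimension $\l$ and thus induces an isomorphism between the dense open sets $U=X\setminus\exc(f^n)$ and $V=X\setminus\exc(f^{-n})$. By Lemma~\ref{lemme_composantes_irreductibles}, the complements $X\setminus U=\exc(f^n)$ and $X\setminus V=\exc(f^{-n})$ have the same dimension and the same number of irreducible components of maximal dimension. As both exceptional loci have codimension $\geq\l+1$, their components of codimension exactly $\l+1$ are precisely their maximal-dimensional components when this common dimension corresponds to codimension $\l+1$, and otherwise both counts are $0$; in either case $\lvert\exc^{\l+1}(f^n)\rvert=\lvert\exc^{\l+1}(f^{-n})\rvert$ for all $n$, whence $\nu^{\l+1}(f)=\nu^{\l+1}(f^{-1})$ after dividing by $n$ and passing to the limit.

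I expect the only delicate point to be this last step: one must identify ``codimension exactly $\l+1$'' with ``maximal-dimensional irreducible component of the exceptional locus'', which relies on the equality of dimensions provided by Lemma~\ref{lemme_composantes_irreductibles} together with the fact that a pseudo-automorphism in codimension $\l$ keeps every exceptional component in codimension $\geq\l+1$.
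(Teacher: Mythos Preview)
Your proof is correct and follows essentially the same approach as the paper: compute the displacement of the base vertex via Lemma~\ref{distancel}, use semi-simplicity from Proposition~\ref{prop_semisimple_psaut} together with a triangle-inequality argument to sandwich $\l(f)$ between the two limits, and invoke Lemma~\ref{lemme_composantes_irreductibles} for the algebraically closed case. In fact, your treatment of that last step is more explicit than the paper's, which merely cites the lemma without spelling out why the codimension-$(\l+1)$ components coincide with the maximal-dimensional ones.
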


\begin{proof}By Corollary \ref{prop_semisimple_psaut}, for any $f\in\Psaut^\l(X )$ and any $x\in \Min(f)$, we have $n\in\N$, $\dist(x,f^n(x))=n\l(f)$.	
Let $p=(X, \id)$. Then, by Lemma \ref{distancel}, 
\[
\d(p, f^n(p))=\lvert \exc^{\l+1}(f^n)\rvert+\lvert \exc^{\l+1}(f^{-n})\rvert,
\] implying that for any $n\in \Z_{>0}$, $n\l(f)\leq \lvert \exc^{\l+1}(f^n)\rvert+\lvert \exc^{\l+1}(f^{-n})\rvert$. Taking the limit we get: $\l(f)\leq \nu^{\l+1}(f)+\nu^{\l+1}(f^{-1})$. 
	
On the other hand, let $x\in \Min(f)$ so for any $n\in\N$, $\dist(x,f^n(x))=n\l(f)$. Set $K:=\d(p, x)$. Then for any $n\in \N$, \[\lvert \exc^{\l+1}(f^n)\rvert+\lvert \exc^{\l+1}(f^{-n})\rvert=\d(p, f^n(p))\leq n\l(f)+2K\] and hence $\nu^{\l+1}(f)+\nu^{\l+1}(f^{-1})\leq \l(f)$. 

The second part of the statement follows from Lemma \ref{lemme_composantes_irreductibles}.
\end{proof}

When the field is not algebraically closed we do not know whether for all $f\in\Psaut^\l(X)$, $\Exc^{\l+1}(f)=\Exc^{\l+1}(f^{-1})$, even in the case where $\Exc^{\l+1}(f)$ is equal to zero.

Proposition \ref{lemme_dynexcp} implies the following proposition, which can be seen as an analogue to Theorem~\ref{blancdesthm}:

\begin{proposition}\label{prop_nu_0} Let $X $ be a variety over a field $\kk$.
	Let $f\in \Psaut^\l(X )$. Then 
	\begin{enumerate}
		\item $\nu^{\l+1}(f)$ is an integer;
		\item the sequence $\{\Exc^{\l+1}(f^n)\}_{n\in\N}$ is either bounded or grows asymptotically linearly in $n$;
	\item there exists a variety $Y $ and an isomorphism in codimension $\l$ $$\varphi\colon Y \dashrightarrow X $$ such that $|\Exc^{\l+1}(\varphi^{-1} f\varphi)|=\nu^{\l+1}(f)$ and $|\Exc^{\l+1}(\varphi^{-1} f^{-1}\varphi)|=\nu^{\l+1}(f^{-1})$;
		\item in particular, $\nu^{\l+1}(f)=0$ and $\nu^{\l+1}(f^{-1})=0$ if and only if there exists a variety $Y $ and an isomorphism in codimension $\l$ $$\varphi\colon Y \dashrightarrow X $$ such that $\varphi f\varphi^{-1}\in \Psaut^{\l+1}(Y)$.
	\end{enumerate}
\end{proposition}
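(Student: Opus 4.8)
The strategy is to transcribe the proof of Theorem~\ref{blancdesthm} to the cube complex $\CC^\l(X)$, with the r\^ole of base points played by the irreducible components of the exceptional locus of pure codimension $\l+1$, and Lemma~\ref{lemma:comb_geodesic} replaced by Lemma~\ref{distancel}. By Proposition~\ref{prop_semisimple_psaut} the isometry of $\CC^\l(X)$ induced by $f$ is semi-simple, so $\Min(f)\neq\emptyset$; I would fix a vertex $v=[(Y,\varphi)]\in\Min(f)$ and set $g\coloneqq\varphi^{-1}f\varphi$. The variety $Y$ and the marking $\varphi$ (which is an isomorphism in codimension $\l$, since $(Y,\varphi)$ is a vertex) will be those claimed in (3).

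I would first prove (1) and (3) simultaneously. Conjugacy invariance of the dynamical number gives $\nu^{\l+1}(f)=\nu^{\l+1}(g)$ and $\nu^{\l+1}(f^{-1})=\nu^{\l+1}(g^{-1})$, and subadditivity of $n\mapsto\lvert\Exc^{\l+1}(g^n)\rvert$ yields $\nu^{\l+1}(f)\le\lvert\Exc^{\l+1}(g)\rvert$ and $\nu^{\l+1}(f^{-1})\le\lvert\Exc^{\l+1}(g^{-1})\rvert$. On the other hand, $v\in\Min(f)$ means $\dist(v,f(v))=\l(f)$, and applying Lemma~\ref{distancel} to $v=[(Y,\varphi)]$ and $f(v)=[(Y,f\varphi)]$ evaluates this distance as $\lvert\Exc^{\l+1}(g)\rvert+\lvert\Exc^{\l+1}(g^{-1})\rvert$. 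Comparing with the identity $\l(f)=\nu^{\l+1}(f)+\nu^{\l+1}(f^{-1})$ of Proposition~\ref{lemme_dynexcp} forces equality in both subadditive estimates, so that
\[
\nu^{\l+1}(f)=\lvert\Exc^{\l+1}(g)\rvert,\qquad \nu^{\l+1}(f^{-1})=\lvert\Exc^{\l+1}(g^{-1})\rvert.
\]
The left-hand sides are cardinalities, which proves (1), and these are exactly the equalities asserted in (3).

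For (2) I would upgrade this to the \emph{exact} additivity $\lvert\Exc^{\l+1}(g^n)\rvert=n\,\nu^{\l+1}(f)$. If $f$ is elliptic this is immediate: then $\l(f)=0$, so $\Exc^{\l+1}(g)=\emptyset$ and $g\in\Psaut^{\l+1}(Y)$, whence all powers $g^n$ lie in $\Psaut^{\l+1}(Y)$. If $f$ is loxodromic, the orbit $\{f^i(v)\}_{i\in\Z}$ lies on an axis, so the concatenation $v\to f(v)\to\cdots\to f^n(v)$ runs along this axis and is geodesic; by Theorem~\ref{theorem:combinatorial_geodesic} it crosses no hyperplane twice, and the set of hyperplanes separating $v$ from $f^n(v)$ is the disjoint union $\bigsqcup_{i=0}^{n-1}f^i(\mathcal{S})$, where $\mathcal{S}$ separates $v$ from $f(v)$. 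Since the action of $\Psaut^\l(X)$ preserves the orientation of $\CC^\l(X)$, the splitting of $\mathcal{S}$ provided by Lemma~\ref{distancel} into hyperplanes dual to added components (those of $\Exc^{\l+1}(g)$) and to removed components (those of $\Exc^{\l+1}(g^{-1})$) is respected by each $f^i$; hence the number of ``added'' hyperplanes separating $v$ from $f^n(v)$ is $n\lvert\Exc^{\l+1}(g)\rvert$. By the direct description of Lemma~\ref{distancel} this same number equals $\lvert\Exc^{\l+1}(g^n)\rvert$, giving the additivity. As $\lvert\Exc^{\l+1}(f^n)\rvert$ differs from $\lvert\Exc^{\l+1}(g^n)\rvert$ by a bounded amount, the sequence is bounded when $\nu^{\l+1}(f)=0$ and grows like $n\,\nu^{\l+1}(f)$ otherwise.

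Finally, (4) is the degenerate case. By Proposition~\ref{lemme_dynexcp}, the vanishing $\nu^{\l+1}(f)=\nu^{\l+1}(f^{-1})=0$ is equivalent to $\l(f)=0$, i.e.\ (by semi-simplicity) to $f$ being elliptic, hence to $f$ fixing a vertex $[(Y,\varphi)]$. Reading off the equivalence relation on marked pairs, $f$ fixes $[(Y,\varphi)]$ precisely when $\varphi^{-1}(f\varphi)$ is an isomorphism in codimension $\l+1$, that is, when $\varphi^{-1}f\varphi\in\Psaut^{\l+1}(Y)$, which is the desired statement. I expect the only delicate point to be the orientation-equivariant bookkeeping of ``added'' versus ``removed'' hyperplanes along the axis in the proof of (2): one must check that pushing a separating hyperplane forward by $f^i$ keeps $v$ on the same side, which is exactly what orientation preservation guarantees. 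Everything else is a routine transcription of the surface argument.
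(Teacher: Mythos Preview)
Your argument is correct and follows the same overall strategy as the paper: pick a vertex $(Y,\varphi)\in\Min(f)$, write $g=\varphi^{-1}f\varphi$, and combine Lemma~\ref{distancel} with Proposition~\ref{lemme_dynexcp} and subadditivity to force $\nu^{\l+1}(f)=|\Exc^{\l+1}(g)|$ and $\nu^{\l+1}(f^{-1})=|\Exc^{\l+1}(g^{-1})|$. Parts (1), (3), and (4) are handled identically in both proofs.

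The only substantive difference is your treatment of (2). The paper obtains the exact additivity $|\Exc^{\l+1}(g^n)|=n\,|\Exc^{\l+1}(g)|$ by a direct squeeze: since $(Y,\varphi)\in\Min(f)$ one has $\dist(v,f^n(v))=n\,\dist(v,f(v))$, and by Lemma~\ref{distancel} and subadditivity
\[
n\bigl(|\Exc^{\l+1}(g)|+|\Exc^{\l+1}(g^{-1})|\bigr)\;\ge\;|\Exc^{\l+1}(g^n)|+|\Exc^{\l+1}(g^{-n})|\;=\;\dist(v,f^n(v)),
\]
with the left-hand side also equal to $\dist(v,f^n(v))$; equality then holds termwise. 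This avoids entirely the hyperplane-by-hyperplane bookkeeping along the axis and the orientation argument you flag as the delicate step. Your geometric argument is valid (orientation preservation does ensure that ``added'' versus ``removed'' is stable under pushing forward by $f^i$, and since the concatenated path is geodesic the local classification at each segment agrees with the global one relative to $v$), but the paper's squeeze is shorter and sidesteps that verification.
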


\begin{proof}
	We choose a vertex $(Y ,\varphi)$ in $\Min(f)$. For any $n\in \N$, we have $n\lvert \exc^{\l+1}(f)\rvert\geq \lvert \exc^{\l+1}(f^n)\rvert$ and therefore:
	\begin{align*}
	\dist((Y ,\varphi),(Y ,f^n\varphi))&=n\left(\lvert \exc^{\l+1}(\varphi^{-1}f\varphi)\rvert+\lvert \exc^{\l+1}(\varphi^{-1}f\varphi)\rvert\right)\\
	&\geq\lvert \exc^{\l+1}(\varphi^{-1}f^n\varphi)\rvert+\lvert \exc^{\l+1}(\varphi^{-1}f^{-n}\varphi)\rvert\\
	& =\dist((Y ,\varphi),(Y ,f^n\varphi)).
	\end{align*} 
	It follows that $n\lvert \exc^{\l+1}(\varphi^{-1}f\varphi)\rvert=\lvert \exc^{\l+1}(\varphi^{-1}f^n\varphi)\rvert$. This implies that $\nu^{\l+1}(f)$, which is invariant under conjugation, is an integer. This proves the first three points. 
	
	The last point follows from Proposition~\ref{lemme_dynexcp} and point (3). 
\end{proof}

\subsection{(Pseudo)-regularization} 
In this section we gather results about regularization and pseudo-regularization. We also prove Theorem~\ref{thm_regularization}.

\begin{proposition}\label{boundedcomp}
	Let $X$ be a variety over a field $\kk$ and $G\subset\Psaut^\l(X )$ for some $0\leq \l\leq \dim(X )-1$. The subgroup $G$ is pseudo-regularizable in codimension $\l+1$ by an isomorphism in codimension $\l$ if and only if $\{\lvert\Exc^{\l+1}(g)\rvert\mid g\in G\}$ is uniformly bounded.
\end{proposition}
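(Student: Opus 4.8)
The plan is to mirror the proof of Proposition~\ref{thm_reg_boundedorbit} from the surface case, replacing the blow-up complex by $\CC^\l(X)$ and the distance formula of Lemma~\ref{lemma:comb_geodesic} by that of Lemma~\ref{distancel}. I fix the base vertex $p\coloneqq[(X,\id)]$ of $\CC^\l(X)$ and use that $\Psaut^\l(X)$ acts on $\CC^\l(X)$ by orientation-preserving isometries via $g\cdot[(A,\varphi)]=[(A,g\varphi)]$. The starting point is the identity, valid for every $g\in\Psaut^\l(X)$,
\[
\dist(p,g(p))=\lvert\Exc^{\l+1}(g)\rvert+\lvert\Exc^{\l+1}(g^{-1})\rvert,
\]
which follows from Lemma~\ref{distancel} applied to $v_1=[(X,\id)]$ and $v_2=g(p)=[(X,g)]$, whose transition maps are $g$ and $g^{-1}$.

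For the direction ``bounded $\Rightarrow$ regularizable'', I would assume $\lvert\Exc^{\l+1}(g)\rvert\leq K$ for all $g\in G$. Since $G$ is a subgroup, $g^{-1}\in G$ as well, so the displayed identity gives $\dist(p,g(p))\leq 2K$ for all $g\in G$; hence the $G$-orbit of $p$ is bounded. As the action preserves the orientation, Proposition~\ref{fixedpoint} produces a fixed vertex $[(Y,\varphi)]$. Fixedness means $[(Y,g\varphi)]=[(Y,\varphi)]$ for all $g\in G$, which by the very definition of the equivalence relation on vertices of $\CC^\l(X)$ says precisely that $\varphi^{-1}g\varphi\colon Y\to Y$ is an isomorphism in codimension $\l+1$, i.e.\ $\varphi^{-1}g\varphi\in\Psaut^{\l+1}(Y)$. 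Since $\varphi\colon Y\dashrightarrow X$ is an isomorphism in codimension $\l$ (as $[(Y,\varphi)]$ is a vertex of $\CC^\l(X)$), the map $f\coloneqq\varphi^{-1}$ is an isomorphism in codimension $\l$ with $fGf^{-1}\subset\Psaut^{\l+1}(Y)$, which is exactly the required pseudo-regularization.

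For the converse, suppose there is an isomorphism in codimension $\l$, $f\colon X\dashrightarrow Y$, with $fGf^{-1}\subset\Psaut^{\l+1}(Y)$. Then $v\coloneqq[(Y,f^{-1})]$ is a vertex of $\CC^\l(X)$, and for every $g\in G$ the transition map of $g(v)=[(Y,gf^{-1})]$ is $fgf^{-1}\in\Psaut^{\l+1}(Y)$, so $g(v)=v$; thus $G$ fixes $v$. Writing $M\coloneqq\dist(p,v)$ and using that $g$ is an isometry fixing $v$, the triangle inequality yields
\[
\dist(p,g(p))\leq\dist(p,v)+\dist(v,g(p))=\dist(p,v)+\dist(g^{-1}(v),p)=2M,
\]
and therefore $\lvert\Exc^{\l+1}(g)\rvert\leq\dist(p,g(p))\leq 2M$ for all $g\in G$, giving the desired uniform bound.

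I do not expect a serious obstacle here: the argument is a direct transcription of Proposition~\ref{thm_reg_boundedorbit} once the higher-dimensional machinery is in place. The only two points that require care are verifying that the orientation-preservation hypothesis of Proposition~\ref{fixedpoint} genuinely holds for the $\Psaut^\l(X)$-action (established where the action is defined), and checking that fixing a vertex of $\CC^\l(X)$ translates \emph{exactly} into conjugation into $\Psaut^{\l+1}(Y)$ by an isomorphism in codimension $\l$ — this is guaranteed by the definitions of the vertices and of their equivalence relation, so the dictionary between the two sides is tight and no loss of codimension occurs.
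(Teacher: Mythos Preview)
Your proof is correct and follows essentially the same approach as the paper: both arguments translate pseudo-regularizability into fixing a vertex of $\CC^\l(X)$, then use Proposition~\ref{fixedpoint} together with the distance formula of Lemma~\ref{distancel} to equate bounded orbits with a uniform bound on $\lvert\Exc^{\l+1}(g)\rvert$. Your write-up simply unpacks more of the details (the explicit triangle-inequality computation and the dictionary between fixed vertices and conjugation into $\Psaut^{\l+1}$) that the paper leaves implicit.
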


\begin{proof}
Consider the action of $G$ on $\CC^\l(X)$, then $G$ is pseudo-regularizable in codimension $\l+1$ by an isomorphism in codimension $\l$ if and only if $G$ fixes a vertex of $\CC^\l(X)$. By Proposition~\ref{fixedpoint}, $G$ is pseudo-regularizable in codimension $\l+1$ by an isomorphism in codimension $\l$ if and only if the orbits of $G$ are bounded, which is equivalent to $\{\lvert\Exc^{\l+1}(g)\rvert\mid g\in G\}$ being uniformly bounded, as a consequence of Lemma~\ref{distancel}. 
\end{proof}

%

In the next proposition we obtain a similar result as Theorem \ref{prop_reg_field_extension}.

\begin{proposition}\label{fieldhigherdim}
Let $X $ be a geometrically integral variety of dimension $d$ over a perfect field $\kk$ and let $G\subset \Psaut^\l(X )$ be a subgroup. Then $G$ is pseudo-regularizable in codimension $\l+1$ over the algebraic closure $\bar{\kk}$ of $\kk$ by an isomorphism in codimension $\l$ if and only if $G$ is pseudo-regularizable in codimension $\l+1$ over $\kk$ by an isomorphism in codimension $\l$.
\end{proposition}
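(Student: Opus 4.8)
The plan is to mirror the proof of Theorem~\ref{prop_reg_field_extension}, replacing the number of base points by the number of codimension-$(\l+1)$ exceptional components and the surface criterion of Proposition~\ref{thm_reg_boundedorbit} by its higher-dimensional analogue, Proposition~\ref{boundedcomp}. Concretely, write $\Exc^{\l+1}_{\kk}(g)$ and $\Exc^{\l+1}_{\bar{\kk}}(g)$ for the sets of codimension-$(\l+1)$ irreducible components of the exceptional locus of $g$ computed over $\kk$ and of its base change $g_{\bar{\kk}}$ computed over $\bar{\kk}$. Since $X$ is geometrically integral, $X_{\bar{\kk}}$ is again a variety, so both $\CC^\l(X)$ and $\CC^\l(X_{\bar{\kk}})$ are defined and base change induces a $G$-equivariant map between them. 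By Proposition~\ref{boundedcomp} applied over $\kk$ and over $\bar{\kk}$, the statement reduces to the equivalence
\[
\sup_{g\in G}\lvert\Exc^{\l+1}_{\kk}(g)\rvert<\infty \quad\Longleftrightarrow\quad \sup_{g\in G}\lvert\Exc^{\l+1}_{\bar{\kk}}(g)\rvert<\infty .
\]

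The heart of the argument is the comparison of the two exceptional loci. First I would prove that, for every $g\in\Psaut^\l(X)$, one has the equality of closed subsets $\Exc(g_{\bar{\kk}})=\bigl(\Exc(g)\bigr)_{\bar{\kk}}$. The inclusion $\subseteq$ is easy: if $g$ restricts to a local isomorphism on an open set $W\subseteq X$, then $g_{\bar{\kk}}$ is a local isomorphism on $W_{\bar{\kk}}$, so the maximal such open subset over $\bar{\kk}$ contains $W_{\bar{\kk}}$ for the maximal $W$ over $\kk$. For the reverse inclusion I would use that $\kk$ is perfect, so $\bar{\kk}/\kk$ is Galois: the maximal open subset of $X_{\bar{\kk}}$ on which $g_{\bar{\kk}}$ is a local isomorphism is $\operatorname{Gal}(\bar{\kk}/\kk)$-invariant, hence descends to an open subset of $X$ on which $g$ is a local isomorphism, and maximality over $\kk$ forces it to equal $W_{\bar{\kk}}$. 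Since $\kk$ is perfect, base change to $\bar{\kk}$ preserves codimensions of closed subsets; thus the codimension-$(\l+1)$ components of $\Exc(g_{\bar{\kk}})$ are exactly the irreducible components of the base changes $D_{\bar{\kk}}$, where $D$ runs over the codimension-$(\l+1)$ components of $\Exc(g)$. As distinct such $D$ have base changes with disjoint sets of top-dimensional components, and each $D_{\bar{\kk}}$ contributes at least one, this yields the inequality $\lvert\Exc^{\l+1}_{\kk}(g)\rvert\leq\lvert\Exc^{\l+1}_{\bar{\kk}}(g)\rvert$.

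With this inequality the implication ``pseudo-regularizable over $\bar{\kk}$ $\Rightarrow$ pseudo-regularizable over $\kk$'' is immediate: a uniform bound $K$ on $\lvert\Exc^{\l+1}_{\bar{\kk}}(g)\rvert$ forces $\lvert\Exc^{\l+1}_{\kk}(g)\rvert\leq K$ for all $g\in G$, and Proposition~\ref{boundedcomp} over $\kk$ concludes. For the converse I would argue directly rather than through counting, exactly as in the easy direction of Theorem~\ref{prop_reg_field_extension}: if $\varphi\colon X\dashrightarrow Y$ is an isomorphism in codimension $\l$ over $\kk$ with $\varphi G\varphi^{-1}\subset\Psaut^{\l+1}(Y)$, then $\varphi_{\bar{\kk}}\colon X_{\bar{\kk}}\dashrightarrow Y_{\bar{\kk}}$ is an isomorphism in codimension $\l$ over $\bar{\kk}$, and the equality $\Exc(h_{\bar{\kk}})=\bigl(\Exc(h)\bigr)_{\bar{\kk}}$ applied to each $h=\varphi g\varphi^{-1}$ and to $h^{-1}$ shows that $h_{\bar{\kk}}$ still has exceptional loci of codimension $>\l+1$, i.e.\ $\varphi_{\bar{\kk}}G\varphi_{\bar{\kk}}^{-1}\subset\Psaut^{\l+1}(Y_{\bar{\kk}})$.

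I expect the main obstacle to be the base-change identity $\Exc(g_{\bar{\kk}})=\bigl(\Exc(g)\bigr)_{\bar{\kk}}$, and in particular its reverse inclusion, which is where perfectness is genuinely used, through Galois descent of the maximal locus on which $g$ is a local isomorphism. Once this is in place, together with the preservation of codimension under base change over a perfect field, both implications follow formally from the boundedness criterion of Proposition~\ref{boundedcomp}.
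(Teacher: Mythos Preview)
Your proposal is correct and follows essentially the same approach as the paper: both directions are reduced, via Proposition~\ref{boundedcomp}, to the inequality $\lvert\Exc^{\l+1}_{\kk}(g)\rvert\leq\lvert\Exc^{\l+1}_{\bar{\kk}}(g)\rvert$ for the hard direction, and to a direct base-change of the conjugating model for the easy one. The only noteworthy difference is that you supply a proof of the base-change identity $\Exc(g_{\bar{\kk}})=\bigl(\Exc(g)\bigr)_{\bar{\kk}}$ via Galois descent, whereas the paper simply asserts the resulting inequality on component counts; conversely, the paper makes explicit that $Y_{\bar{\kk}}$ is again a variety (using that $Y$, being birational to the geometrically integral $X$ over a perfect field, is itself geometrically integral), a point you use implicitly when applying your identity to $h=\varphi g\varphi^{-1}$ on $Y$.
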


\begin{proof}
	If $G$ is pseudo-regularizable in codimension $\l+1$ over $\bar{\kk}$ by an isomorphism in codimension $\l$, then there exists a variety $Y_{\bar{\kk}}$ over $\bar{\kk}$ and an isomorphism in codimension $\l$ $f\colon X_{\bar{\kk}} \dashrightarrow Y_{\bar{\kk}}$ such that $fGf^{-1} \subset \Psaut^{\l+1}(Y_{\bar{\kk}} )$. In particular, by Proposition~\ref{boundedcomp} the number of irreducible components of pure codimension $\l+1$ of every element $g\in G$ considered as a birational transformation of $X_{\bar{\kk}}$ is uniformly bounded. The number of irreducible components of pure codimension $\l+1$ of $g$ considered as a birational transformation of $X_{\bar{\kk}}$ over $\bar{\kk}$ is larger or equal to the number of irreducible components of pure codimension $\l+1$ of $g$ considered as a birational transformation of $X $ over ${\kk}$. This implies that the number of irreducible components of pure codimension $\l+1$ of every element $g\in G$ considered as a birational transformation of $X $ is uniformly bounded, and hence, by Proposition~\ref{boundedcomp}, that $G$ is pseudo-regularizable in codimension $\l+1$ over~$\kk$ by a ${\kk}$-birational map that is an isomorphism in codimension $\l$.
	
	On the other hand, if $G$ is pseudo-regularizable in codimension $\l+1$ over $\kk$ by an isomorphism in codimension $\l$, then there exists a variety $Y$ and a birational transformation $f\colon X \to Y$ such that $fGf^{-1}\subset \Psaut^{\l+1}(Y)$. Since $\kk$ is perfect and $X$ is geometrically integral and birationally equivalent to $X$, we obtain that also $Y$ is geometrically integral (see \cite[p.~90, Remark~2.9, Example~2.10]{Liu}), and therefore, that $Y_{\bar{\kk}}$ is a variety over $\bar{\kk}$. In particular, we can consider $fGf^{-1}$ as a subgroup of $\Psaut^{\l+1}(Y_{\bar{\kk}})$, so $G$ is pseudo-regularizable in codimension $\l+1$ over $\bar{\kk}$ by a $\bar{k}$-birational map that is an isomorphism in codimension $\l$ . 
\end{proof}

In what follows, an algebraic group over the field $\kk$ is always assumed to be of finite type, but it doesn't need to be irreducible nor smooth.

\begin{lemma}\label{rational_boundedcomp}
	Let $X $ be a variety over a field $\kk$ and let $G$ be an algebraic group with a faithful rational action on $X $, so we can consider $G(\kk)$ to be a subgroup of $\Bir(X )$. Then there exists a constant $C$, such that for every $g\in G(\kk)$ the number of irreducible components of the exceptional locus $g$ of codimension $1$ is bounded by~$C$.
\end{lemma}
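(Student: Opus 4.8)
The plan is to reduce to the case of a \emph{biregular} action and then transport the bound back along a birational map. Since $G$ acts rationally on $X$, I would invoke the classical regularization theorem for rational actions of algebraic groups (Weil, in the modern form valid for an arbitrary, not necessarily smooth or connected, algebraic group over $\kk$): it produces a variety $X'$ and a birational map $\theta\colon X\dashrightarrow X'$ such that the induced action of $G$ on $X'$ is regular, i.e.\ every $g\in G(\kk)$ acts on $X'$ as a biregular automorphism $g'\coloneqq\theta g\theta^{-1}\in\aut(X')$. The whole point of passing to $X'$ is that such a $g'$ is a local isomorphism \emph{everywhere}, so it contributes nothing to an exceptional locus.

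First I would fix, once and for all, the two finite sets of prime divisors given by the codimension $1$ components of $\Exc(\theta)$ and of $\Exc(\theta^{-1})$, and set $C$ to be the sum of their cardinalities; crucially, $C$ depends only on $\theta$ and not on $g$. Then, for $g\in G(\kk)$, I would estimate $\Exc(g)$ by writing $g=\theta^{-1}g'\theta$ and tracking where this composite fails to be a local isomorphism. A point $x\in X$ lies in $\Exc(g)$ only if either $\theta$ is not a local isomorphism at $x$, or $\theta$ is a local isomorphism at $x$ while $\theta^{-1}$ fails to be one at $g'\theta(x)$ (the middle factor $g'$ never obstructs anything). In other words, as sets,
\[
\Exc(g)\subseteq \Exc(\theta)\cup \theta^{-1}(g')^{-1}\big(\Exc(\theta^{-1})\big).
\]

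Passing to codimension $1$ components finishes the argument. A codimension $1$ component of $\Exc(g)$ whose generic point lies in $\Exc(\theta)$ is one of the finitely many codimension $1$ components of $\Exc(\theta)$. Every other codimension $1$ component has its generic point in the locus where $\theta^{-1}(g')^{-1}$ is a local isomorphism, hence arises as the image of a codimension $1$ component of $\Exc(\theta^{-1})$ under a map that is injective and codimension preserving near that point; since the source component $B=\overline{g'\theta(Z)}$ determines $Z=\overline{\theta^{-1}(g')^{-1}(B)}$, this assignment is injective. Therefore the number of codimension $1$ components of $\Exc(g)$ is at most $C$ for every $g\in G(\kk)$, as claimed.

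The set-theoretic bookkeeping above is routine and needs no normality hypothesis, so the genuine input — and the main obstacle — is the regularization theorem itself: everything hinges on producing the biregular model $X'$ in the correct generality. If one wished to avoid regularization and argue directly, the natural substitute would be to resolve the single birational transformation $\Phi\colon G\times X\dashrightarrow G\times X$, $(g,x)\mapsto(g,g\cdot x)$, and bound the number of top-dimensional components occurring in the fibers over $g\in G(\kk)$; the difficulty there is precisely that the fibers of $\Exc(\Phi)\to G$ can jump in dimension over a proper closed subset of $G$, so that the special fibers require an extra Noetherian induction on $G$ to control. For this reason I would favor the regularization route.
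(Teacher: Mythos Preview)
Your argument via Weil's regularization theorem is correct: once you have a birational model $X'$ on which $G$ acts regularly, the bound $|\Exc^1(g)|\leq |\Exc^1(\theta)|+|\Exc^1(\theta^{-1})|$ follows exactly as you describe, and your injectivity check for the second contribution is fine (if $g'\theta$ is a local isomorphism at the generic point of $Z$, then $\theta^{-1}(g')^{-1}$ is a local isomorphism at the generic point of $B$ and recovers $Z$).

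However, the paper takes precisely the \emph{direct} route you discuss and dismiss in your final paragraph. It considers the birational map $\varphi\colon G\times X\dashrightarrow G\times X$ encoding the rational action, observes that $\Exc(g)\subset\pi\big(\{g\}\times X\cap\Exc(\varphi)\big)$ for the second projection $\pi$, and asserts that the number of codimension~$1$ components of this slice is bounded independently of $g$. Your worry about jumping fibre dimensions is legitimate as a matter of detail---the paper is terse here---but it can be handled, e.g.\ by passing to a completion $\bar X$ and bounding degrees of the resulting family of divisors, or by a short stratification of $G$; one does not need full Noetherian induction.

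The trade-off is this: your approach imports a substantial black box (Weil's regularization in full generality, including non-connected and non-smooth groups), and in the context of this paper that is somewhat self-defeating, since the very next proposition deduces a weak form of Weil's theorem \emph{from} this lemma via the cube complex. The paper's direct argument keeps the lemma self-contained and makes that deduction genuinely new, at the cost of leaving a routine but nonempty step to the reader.
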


\begin{proof}
	By definition, the rational action is given by a $G$-birational map $\varphi\colon G\times X \dashrightarrow G\times X $ such that $\{g\}\times X $ is not contained in the exceptional locus of $\varphi$ for all $g\in G$. For each $g\in G$, if $(g,x)$ is not contained in the exceptional locus of $\varphi$, then $g$ induces a local isomorphism at $x\in X $. In other words, if $\pi\colon G\times X\to X $ is the second projection, then $\Exc(g)\subset \pi(\{g\}\times X\cap \Exc(\varphi))$ for all $g\in G$. The number of irreducible components of codimension $1$ of $\pi(\{g\}\times X \cap \Exc(\varphi))$ is bounded independently of $g$.
\end{proof}

As a special case we obtain the following weak version of Weil's regularization theorem:
\begin{proposition}
	Let $X $ be a variety over a field $\kk$ and $G$ an algebraic group with a faithful rational action on $X $ then $G(\kk)$ is pseudo-regularizable in codimension~$1$.
\end{proposition}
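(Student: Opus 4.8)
The plan is to obtain this statement as an immediate combination of the two preceding results, Lemma~\ref{rational_boundedcomp} and Proposition~\ref{boundedcomp}, read off in the degenerate case $\l=0$. The key observation is that $\Psaut^0(X)=\Bir(X)$, so the faithful rational action exhibits $G(\kk)$ as a subgroup of $\Psaut^0(X)$; moreover, an isomorphism in codimension $0$ is nothing but a birational map (its exceptional locus is automatically a proper closed subset, hence of codimension $>0$), so ``pseudo-regularizable in codimension $1$ by an isomorphism in codimension $0$'' is exactly the assertion ``pseudo-regularizable in codimension $1$'' that we must prove.

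Concretely, I would first apply Lemma~\ref{rational_boundedcomp} to the faithful rational action of $G$, producing a constant $C$ such that every $g\in G(\kk)$ has at most $C$ irreducible components of codimension $1$ in its exceptional locus. In the notation of Section~\ref{subsection_dynamics_exceptional_loci} this says precisely that the set $\{\,\lvert\Exc^{1}(g)\rvert \mid g\in G(\kk)\,\}$ is uniformly bounded. I would then feed this uniform bound into Proposition~\ref{boundedcomp} with $\l=0$: since $\Exc^{\l+1}=\Exc^{1}$ for $\l=0$, the hypothesis of the proposition holds, and its conclusion is exactly that $G(\kk)$ is pseudo-regularizable in codimension $1$, as desired.

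The point to stress is that no genuine obstacle remains at this stage, because all the substance has already been dealt with in the results being invoked. The geometric input --- the uniform bound on exceptional components --- is supplied by Lemma~\ref{rational_boundedcomp}, where one exploits the graph $\varphi\colon G\times X\dashrightarrow G\times X$ of the action together with the fact that $\{g\}\times X$ avoids its exceptional locus. The passage from a uniform bound to an actual regularization is handled by Proposition~\ref{boundedcomp}, which translates a bounded orbit in the cube complex $\CC^0(X)$ into a fixed vertex via the fixed-point property of Proposition~\ref{fixedpoint}. Thus the proof here is a one-line assembly, the only care required being the matching of the index $\l=0$ with the statement.
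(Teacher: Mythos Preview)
Your proposal is correct and follows exactly the same route as the paper's proof: apply Lemma~\ref{rational_boundedcomp} to obtain a uniform bound on $\lvert\Exc^{1}(g)\rvert$ for $g\in G(\kk)$, then invoke Proposition~\ref{boundedcomp} with $\l=0$ to conclude. The paper's own proof is the two-line version of what you wrote; your additional remarks about why $\l=0$ matches the statement are accurate but not strictly needed.
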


\begin{proof}
	The number of irreducible components of codimension $1$ of the exceptional locus of every element $g\in G$ is bounded, by Lemma~\ref{rational_boundedcomp}. Proposition \ref{boundedcomp} implies that $G(\kk)$ is pseudo-regularizable.
\end{proof}

Finally, let us also prove Theorem~\ref{thm_regularization}.

\begin{proof}[Proof of Theorem \ref{thm_regularization}]
	The group $G$ acts on $\CC^{0}(X )$ by isometries and fixes therefore, by definition of the property FW, a vertex $[(X_{1},\varphi_1)]$, hence $\varphi_1$ conjugates $G$ to a subgroup of $\Psaut^{1}(X_{1})$. This yields an action of $G$ on $\CC^{1}(X_{1})$ and again, since $G$ has the property FW, it fixes a vertex in $\CC^{1}(X_{1})$ and the image of $G$ in $\Psaut^{1}(X_{1})$ is therefore conjugate by a isomorphism in codimension one $\varphi_2$ to a subgroup of $\Psaut^{2}(X_{2})$ for some variety $X_{2}$. We continue inductively until we find a variety $X_{d}$ and a birational transformation $\varphi=\varphi_1\dots\varphi_{d}\colon X_{d}\dashrightarrow X $ that conjugates $G$ to a subgroup of $\Psaut^{d}(X_{d})=\aut(X_{d})$.
	If $X$ is a  surface, Proposition~\ref{lemme_FW_surface} implies that $G$ is moreover projectively regularizable.

	If $g$ is divisible or distorded, consider the action of $\langle g\rangle$ on $\CC^0(X )$. We define the length function $L(f):=\dist((X,\id),(X,f))$ on $\Bir(X)$. Let us observe that $L$ is subadditive, takes integral values, and the map $n\mapsto L(g^n)$ grows asymptotically like $l(g)n$, where $l(g)$ is the translation length of $g$. Since $g$ is, by assumption, distorted or divisible, the restriction of $L$ to $\langle g\rangle$ has to be bounded. This implies that the orbit of $\langle g\rangle$ on the $\CAT(0)$ cube complex is bounded, hence it fixes a vertex $[(X_{1},\varphi_1)]$ by Proposition \ref{fixedpoint}. This yields an action of $G$ on $\CC^{1}(X_{1})$ and again, since $g$ is divisible or distorted, it fixes a vertex in $\CC^{1}(X_{1})$ and the image of $\langle g\rangle$ in $\Psaut^{1}(X_{1})$ is therefore conjugate by a isomorphism in codimension one $\varphi_2$ to a subgroup of $\Psaut^{2}(X_{2})$ for some variety $X_{2}$. We continue inductively until we find a variety $X_{d}$ and a birational transformation $\varphi=\varphi_1\dots\varphi_{d}\colon X_{d}\dashrightarrow X $ that conjugates $\langle g\rangle$ to a subgroup of $\Psaut^{d}(X_{d})=\aut(X_{d})$. Hence, it is regularizable.
	If $X$ is a surface, using $\Cb(X)$ we obtain similarly that $g$ is projectively regularizable. 
\end{proof}

\subsection{Degree growth and centralizers of loxodromic elements}
In this section, we prove Theorem \ref{centralizer} and Theorem \ref{thm_degree_growth}. 

\begin{proof}[Proof of Theorem \ref{centralizer}]
	Since $f$ is not pseudo-regularizable, by Proposition \ref{prop_semisimple_psaut} it acts as a loxodromic isometry on $\CC^0(X)$. We may assume that the vertex $(X,\id)$ is contained in  an axis of $f$. This implies in particular the following for all $n>0$:
	\begin{itemize}
		\item $\Exc^1(f^n)$ and $\Exc^1(f^{-n})$ are disjoint,
		\item $\Exc^1(f^n)=\Exc^1(f^{n-1})\sqcup f^{-(n-1)}(\Exc^1(f))$.
	\end{itemize}
	 Assume that $\Exc^1(f)$ consists of $r$ irreducible hypersurfaces $E_1,\dots, E_r$. Then for every irreducible hypersurface $H $ in $\Exc^1(f^n)$ there exists a $1\leq i\leq r$ and a $1\leq j\leq n-1$ such that $H=f^{-j}(E_i)$, i.e., every irreducible hypersurface $H$ is contained in a forward orbit of one of the $E_i$ under $f^{-1}$.
	
	Let $g\in \Bir(X)$ be an element of infinite order that centralizes $f$. Let $n>0$ be large enough such that for all $m\geq 0$ the two sets $\Exc^1(f^{n+m})\setminus \Exc^1(f^n)$ and $\Exc^1(f^{-n-m})\setminus \Exc^1(f^{-n})$ do not contain any hypersurface contained in $\Exc^1(g^i)$ for all $-r-1\leq i\leq r+1$. For $m$ large, $g^i$ maps most of the irreducible hypersurfaces in the exceptional locus of $f^{n+m}$ to irreducible hypersurfaces. Since $g$ commutes with $f$, we have $\Exc^1(f^{l}g^i)=\Exc^1(g^if^{l})$ for all $l$ and all $-r-1\leq i\leq r+1$. We know that $\Exc^1(f^{l}g^i)\subset g^{-i}(\Exc^1(f^l))\cup\Exc^1(g^i)$ and since the hypersurfaces $\Exc^1(f^{n+m})\setminus \Exc^1(f^n)$ lie outside the exceptional locus of the $g^{-i}$, we obtain that for all but $K$ hypersurfaces, where $K$ does not depend on $m$, $g^{-i}(\Exc^1(f^{n+m})\setminus \Exc^1(f^n))$ is contained in $\Exc^1(f^{n+m})$. If $m$ is large enough there has therefore to exist an irreducible hypersurface $H$ contained in $\Exc^1(f^{n+m})\setminus \Exc^1(f^n)$ that is mapped by all the $g^i$ to other irreducible hypersurfaces in $\Exc^1(f^{n+m})\setminus \Exc^1(f^n)$. More precisely, for each $1\leq i\leq r+1$ there is an irreducible hypersurface $H_i'$ contained in $\Exc^1(f^{n+m})\setminus \Exc^1(f^n)$ such that $g^i(H)=H_i'$. As observed above, each of the hypersurfaces $H_i'$ is contained in a forward orbit under $f^{-1}$ of the irreducible hypersurfaces $E_1,\dots, E_r$. By the pigeonhole principle, there exist $i\neq j$, $1\leq i,j\leq r+1$ such that $H_i'$ and $H_j'$ are contained in the forward orbit of the same hyperplane $E_l$. In particular, there exists a $s<0$ such that $f^s(H_i')=H_j'$ or $f^s(H_j')=H_i'$. Without loss of generality we may assume $f^s(H_i')=H_j'$. In other words, $f^s(g^i(H))=g^j(H)$. Hence, for the birational transformation $h=f^sg^{i-j}$ we have $h(g^j(H))=g^j(H)$. Note that $h$ still centralizes $f$. So for all $m$ large enough we have $h(f^{-m}(g^j(H)))=f^{-m}(hg^j(H))=f^{-m}(g^j(H))$. Note that $f^{-m}(g^j(H))=g^j(f^{-m}(H))$. Since $f^{-m}(H)\neq f^{-m'}(H)$ for all $m\neq m'$, we therefore obtain that the hypersurfaces $f^{-m}(g^j(H))$ are all different for $m$ large enough. In other words, there exist infinitely many hypersurfaces $H'$ such that $h(H')=H'$. By \cite[Corollary~1.3]{bell2018invariant} (see also \cite{Cantat_invariant} for the case, where the base field is the field of complex numbers), there exists a non-constant rational function in the function field $\gamma\in\kk(X)\setminus\kk$, such that $\gamma h=\gamma$. Consider the function field $\kk(X)^h\subset\kk(X)$ of functions on $X$ that are invariant under $h$. Since $f$ commutes with $h$, the subfield $\kk(X)^h$ is preserved by $f$. The equivariant embedding $\kk(X)^h\hookrightarrow\kk(X)$ corresponds to a rational dominant map $\varphi\colon X\dashrightarrow Y$ to some variety $Y$ that is $h$-invariant and the action of $f$ on $\kk(X)^h$ induces a birational map on $Y$ such that $\varphi$ is $f$-equivariant. By the existence of $\gamma$, the variety $Y$ is positive dimensional. If $h$ has infinite order, we have moreover $\dim(Y)<\dim(X)$, since in this case, $\kk(X)$ is an infinite extension of $\kk(Y)$. Otherwise, there exists an integer $m>0$ such that $g^m$ is contained in the cyclic group generated by $f$. So if $f$ does not preserve any rational fibration, then there exists for every element $g\in\cent(f)$ an integer $m>0$ satisfying $g^m\in\langle f\rangle$. By \cite[Theorem~6.4]{genevois2019cubical}, $\langle f\rangle$ is a direct factor of some finite index subgroup of $\cent(f)$, which implies in particular that $\cent(f)$ contains as a finite index subgroup $\langle f\rangle\times G$, where $G\subset\Bir(X)$ is a torsion subgroup. 
\end{proof}

Note that Theorem~\ref{centralizer} implies in particular that if a birational transformation $g\in\Bir(X)$ inducing a loxodromic isometry on $\CC^0(X)$ is irreducible in the sense that it does not permute the fibers of any rational map $X\dashrightarrow Y$, where $\dim(Y)<\dim(X)$, and $f$ is a transformation that commutes with $g$, then there exist non-zero integers $m,n$ such that $f^m=g^n$. A similar result has been proven by Serge Cantat for $f\in\Cr_2(\mathbb{C})$ that induce a loxodromic isometry in $\h$. In fact, the proof of Theorem~\ref{centralizer} was inspired by Serge Cantat's proof of Proposition~8.8. in the long version of the paper \cite{Cantat_groupes_birat}, which can be found on the author's personal website.

\begin{proof}[Proof of Theorem \ref{thm_degree_growth}]
	We may replace $\kk$ by its algebraic closure. This doesn't change the degrees of $g$ and its iterates and the induced isometry of $g$ on $\CC^0(\p_\kk^d)$ is still loxodromic. Assume that $g$ is of degree $n$, i.e., $g$ is given by $[x_0:\dots:x_d]\dashmapsto [g_0:\dots: g_d]$, where the $g_i\in\kk[x_0,\dots, x_d]_n$ are homogeneous polynomials of degree $n$ without a non-constant common factor. Then the exceptional locus of $g$ is the zero locus of the Jacobian of $(g_0,\dots, g_d)$, which is of degree $\leq (d+1)(n-1)$. In particular, if the exceptional locus of $g$ has $r$ irreducible components, then $n$ is bounded from below by $\frac{1}{d+1} r$. By assumption, $g$ induces a loxodromic isometry on $\CC^0(\p_\kk^d)$. Since we work over the algebraic closure of $\kk$, Lemma~\ref{lemme_composantes_irreductibles} implies that the number of irreducible components of $\Exc(g)$ is the same as the number of irreducible components of $\Exc(g^{-1})$. By Lemma~\ref{distancel}, the number of irreducible components of the exceptional locus of $g^n$ grows therefore asymptotically like $C\cdot n$ for some integer $C>0$. As a consequence, the degree of $g^n$ grows asymptotically at least like $\frac{C}{d+1} n$. 
\end{proof}

Another application of the action on a $\CAT(0)$ cube complex is the following direct corollary to a result by Anthony Genevois.

\begin{proposition}
	Let $X$ be a variety over a field $\kk$. 
	Let $G\subset \Bir(X)$ be a polycyclic group that doesn't contain any non-trivial pseudo-regularizable element. Then $G$ is virtually free abelian.
\end{proposition}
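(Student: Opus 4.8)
The plan is to convert the algebraic hypothesis on $G$ into a purely dynamical statement about its action on the $\CAT(0)$ cube complex $\CC^0(X)$, and then to quote a structural theorem of Genevois. First I would record that $G\subset\Bir(X)=\Psaut^0(X)$ acts on $\CC^0(X)$ by orientation-preserving isometries. The crucial translation is the following: for every non-trivial $g\in G$, the cyclic group $\gen{g}$ is, by hypothesis, not pseudo-regularizable (i.e.\ not pseudo-regularizable in codimension $1$ by a birational map), so Proposition~\ref{boundedcomp} applied with $\l=0$ shows that the numbers $\lvert\Exc^1(g^n)\rvert$ are not uniformly bounded. By Lemma~\ref{distancel} the distance from the base vertex $p=[(X,\id)]$ to $g^n(p)=[(X,g^n)]$ equals $\lvert\Exc^1(g^n)\rvert+\lvert\Exc^1(g^{-n})\rvert$, so the $\gen{g}$-orbit of $p$ is unbounded. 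Since the action preserves the orientation, Proposition~\ref{prop_semisimple_psaut} leaves only the alternatives elliptic or loxodromic, and an elliptic isometry fixes a vertex and hence has bounded $\gen{g}$-orbit; thus every non-trivial $g\in G$ is loxodromic. In particular $G$ is torsion-free, because any finite-order element would have a finite, hence bounded, orbit and would therefore be elliptic by Proposition~\ref{fixedpoint}.

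At this stage $G$ is a polycyclic group acting on the $\CAT(0)$ cube complex $\CC^0(X)$ in such a way that every non-trivial element is loxodromic. This is exactly the situation governed by the theorem of Genevois \cite{genevois2019cubical}: a finitely generated group with no non-abelian free subgroup that acts on a $\CAT(0)$ cube complex without non-trivial elliptic element is virtually free abelian. A polycyclic group is finitely generated and solvable, hence amenable and in particular free of non-abelian free subgroups, so all hypotheses are met and the conclusion that $G$ is virtually free abelian follows at once. The geometric content behind Genevois's theorem is that an all-loxodromic action forces $G$ to virtually stabilise a Euclidean (cubical) flat on which it acts by translations, which identifies it with a subgroup of a translation group, i.e.\ a virtually $\Z^n$ group.

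The step I expect to be the main obstacle is matching the precise hypotheses of Genevois's theorem to our infinite-dimensional complex $\CC^0(X)$. The delicate point is that the sharpest statements in \cite{genevois2019cubical} are phrased for \emph{contracting} isometries, a condition strictly stronger than loxodromicity once the cube complex is infinite dimensional; if the version one uses requires contraction, I would additionally have to check that the loxodromic isometries coming from $G$ skewer a suitable family of hyperplanes, so that their combinatorial axes are contracting. An alternative, more hands-on route that sidesteps the full classification would be an induction on the Hirsch length of $G$ using \cite[Theorem~6.4]{genevois2019cubical}: for a loxodromic $g$ central in a finite-index subgroup $G'$, that theorem writes $\gen{g}$ as a direct factor $G'=\gen{g}\times K$ of a finite-index subgroup of its centraliser, where $K$ is polycyclic of smaller Hirsch length with every non-trivial element still loxodromic, which closes the induction. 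The obstruction to this elementary route is that a general polycyclic group need not contain a central loxodromic element, and it is precisely this gap that makes the appeal to Genevois's structural theorem the natural and cleanest path.
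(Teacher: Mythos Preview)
Your approach is essentially the same as the paper's: translate the hypothesis into a dynamical statement about the action on $\CC^0(X)$ and then invoke a structural result from \cite{genevois2019cubical}. The paper's proof is in fact even terser than yours: it simply asserts that the action on $\CC^0(X)$ is \emph{proper} (which is what your first paragraph establishes in the form ``every non-trivial element is loxodromic'') and then cites \cite[Corollary~6.9]{genevois2019cubical} directly, without the caveats you raise about contracting isometries or an inductive alternative via \cite[Theorem~6.4]{genevois2019cubical}. Your more careful unpacking of the translation step, via Proposition~\ref{boundedcomp} and Proposition~\ref{prop_semisimple_psaut}, is a genuine improvement in exposition over the paper's one-line claim of properness.
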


\begin{proof}
	If $G$ does not contain any pseudo-regularizable element, then its action on $\CC^0(X)$ is proper. The result follows directly from \cite[Corollary~6.9]{genevois2019cubical} (see also \cite{cornulier_wallings}). 
\end{proof}

\section{Further remarks and open questions}\label{section_further_remarks}

\subsection{Comparison of isometry types in the surface case}\label{comparison}
One of the breakthroughs in the understanding of the structure of groups of birational transformations in rank two has been their action on an infinite dimensional hyperbolic space $\h$. Let $S$ be a regular surface over a field $\kk$ and $H$ an ample divisor on $S$, then the \emph{degree} of $f$ with respect to $H$ is defined by $\deg_H(f)=f^*H\cdot H$, where $f^*H$ denotes the total transform. We recall the following central result. 

\begin{theorem}[{\cite{MR563788}, \cite{diller2001dynamics},\cite{Cantat_groupes_birat}}]\label{deggrowth}{Let $S$ be a regular surface over a field $\kk$ and $H$ an ample divisor on $S$. Consider the action of $\Bir(S)$ on $\h$. For this action, 
		\begin{enumerate}
			\item elliptic isometries correspond to birational transformations for which the sequence $(\deg_H(f^n))$ is bounded;
			\item parabolic isometries correspond to a polynomial growth of $(\deg_H(f^n))$: either $\deg_H(f^n)$ grows asymptotically linearly and $f$ is
			called a Jonqui\`eres twist, or $\deg_H(f^n)$ grows asymptotically quadratically and $f$ is called a Halphen twist;
			\item loxodromic isometries correspond to birational transformations for which $\deg_H(f^n)$ grows asymptotically exponentially. 
	\end{enumerate}}
\end{theorem}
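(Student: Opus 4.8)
The plan is to deduce the trichotomy from the dictionary between the degree sequence and the displacement of a base point in $\h$. Recall that $\h$ is the hyperbolic space attached to the Picard--Manin space $\ZZ(S)$, the limit of the Néron--Severi groups $\NS(S')$ over all regular projective surfaces $S'$ dominating $S$, equipped with the intersection form, which has signature $(1,\infty)$. The group $\Bir(S)$ acts on $\ZZ(S)$ preserving this form, hence by isometries on the associated hyperboloid $\h=\{u\in\ZZ(S)\otimes\R\mid u^2=1,\ u\text{ in the positive cone}\}$, with $\cosh\d(u,v)=u\cdot v$. Normalising the class $[H]$ so that $[H]^2=1$, one obtains the key identity $\deg_H(f^n)=H^2\cdot\cosh\d([H],f^n_*[H])$, so that the asymptotics of the degree sequence are exactly those of $\cosh$ of the orbit displacement of the base point $[H]$.

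First I would treat the elliptic and loxodromic cases, which follow from the general classification of isometries of $\h$ into elliptic, parabolic and loxodromic together with this dictionary. An isometry $g$ of $\h$ is elliptic precisely when one (equivalently every) orbit is bounded; by the identity above this is equivalent to $(\deg_H(f^n))$ being bounded, giving (1). Introducing the dynamical degree $\lambda(f)=\lim_n\deg_H(f^n)^{1/n}$, the submultiplicativity of degrees guarantees that the limit exists, and since $\log\cosh(t)\sim t$ the dictionary shows that $\log\lambda(f)$ equals the translation length $L(f)=\lim_n\d([H],f^n_*[H])/n$. Hence $f$ is loxodromic, i.e. $L(f)>0$, if and only if $\lambda(f)>1$, which is exactly exponential growth of $(\deg_H(f^n))$; this is (3).

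It remains to analyse the parabolic case, $L(f)=0$ with unbounded orbit, and to split it into linear and quadratic growth; this is where I expect the main difficulty. Pure hyperbolic geometry only yields that the displacement $\d([H],f^n_*[H])$ grows sublinearly, hence that the degree growth is subexponential, but it does not by itself distinguish linear from quadratic. To pin this down one uses the integral structure: a parabolic isometry fixes a unique boundary point, represented by an isotropic class $\theta\in\partial\h$, and because $f_*$ preserves the lattice $\ZZ(S)$ its action is virtually unipotent, so that $n\mapsto[H]\cdot f^n_*[H]$ is eventually a polynomial whose degree is governed by the size of the largest Jordan block of the unipotent part. In the Lorentzian setting this degree can only be $1$ or $2$, yielding linear or quadratic growth respectively. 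Finally I would identify these two cases geometrically: the class $\theta$ is proportional to the class of a fibre of an $f$-invariant pencil, and by the classification of such invariant fibrations (Gizatullin, Diller--Favre) the pencil is either rational, in which case $f$ is a Jonqui\`eres twist with linear degree growth, or of genus one, in which case $f$ is a Halphen twist with quadratic degree growth, completing (2).
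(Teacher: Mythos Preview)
The paper does not prove this theorem: it is stated as a background result and attributed to the cited references (Gizatullin, Diller--Favre, Cantat), with no argument given beyond the citations. There is therefore nothing in the paper to compare your proposal against.

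That said, your sketch is a faithful outline of the standard proof as found in those references. The dictionary $\deg_H(f^n)=H^2\cosh\d([H],f^n_*[H])$ immediately gives (1) and (3) via the elliptic/loxodromic dichotomy and the identity $\log\lambda(f)=L(f)$. For (2), the key input you correctly identify is that a parabolic isometry preserving a Lorentzian lattice is virtually unipotent, so the growth of $[H]\cdot f^n_*[H]$ is polynomial; the bound on the degree (at most $2$) comes from the fact that in signature $(1,m)$ a unipotent isometry has Jordan blocks of size at most $3$. The geometric identification of the fixed isotropic ray $\theta$ with the class of a fibre of an $f$-invariant pencil, and the Gizatullin/Diller--Favre dichotomy between rational and genus-one pencils, then separates the linear and quadratic cases. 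One small point worth making explicit in a full proof: the passage from the infinite-dimensional $\ZZ(S)$ to a finite-dimensional Lorentzian sublattice on which the Jordan-block argument applies requires an algebraic stability step (Diller--Favre), so that $f_*$ acts as a lattice isometry on $\NS(S')$ for a suitable model $S'$.
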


In what follows, we look at the various types of birational transformations as classified by Theorem~\ref{deggrowth} and compare the types of the induced actions on the three non-positively curved spaces $\h$, $\Cb(S)$, and $\CC^0(S)$.

\subsubsection*{Transformations with bounded degree growth} Transformations with bounded degree growth, sometimes also called \emph{algebraic} transformations, induce elliptic isometries on $\h$, by Theorem~\ref{deggrowth} and are conjugate to the automorphism of a projective surface. Hence, they also induce elliptic isometries on $\Cb(S)$ and $\CC^0(S)$. 

\subsubsection*{Transformations with linear degree growth} These transformations are exactly the de Jonqui\`eres twists and are never conjugate to an automorphism of any projective surface (\cite{Blanc_Deserti}). So they induce loxodromic isometries on $\Cb(S)$. There exist de Jonqui\`eres twists inducing elliptic isometries on $\CC^0(S)$, such as the transformation of $\A^2$ given $(x,y)\dashmapsto(xy,y)$ and other de Jonqui\`eres twists inducing loxodromic isometries on $\CC^0(S)$ such as the birational transformation of $\A^2$ given by $(x,y)\mapsto (xy, y+1)$. 

\subsubsection*{Transformations with quadratic degree growth} These transformations are exactly the Halphen twists, they induce parabolic isometries on $\h$ and are conjugate to automorphisms of projective surfaces, by Theorem~\ref{deggrowth}. Hence they induce elliptic isometries on $\Cb(S)$ and $\CC^0(S)$. 

\subsubsection*{Transformations with exponential degree growth} This is the generic type of birational transformations of rational surfaces. They induce loxodromic isometries on $\h$. Some of them are conjugate to an automorphism of a projective surface, while others are not. The {\it dynamical degree}, which is defined as the limit $\lambda(f)=\lim_{n\to\infty}\deg(f^n)^{1/n}$, has been studied intensely and is connected to the question whether the transformation $f$ is conjugate to an automorphism of a projective surface (\cite{blanc_cantat_dynamic_degree}). The transformations with exponential degree growth that are conjugate to an automorphism of a projective surface, induce elliptic isometries on $\Cb(S)$ and $\CC^0(S)$. The transformations with exponential degree growth that are not conjugate to an automorphism of a projective surface induce loxodromic isometries on $\Cb(S)$, whereas they induce elliptic isometries on $\CC^0(S)$ if they are regularizable, and loxodromic isometries on $\CC^0(S)$ if they aren't. For instance, the transformation $(x,y)\dashmapsto (y, x+y^2)$ induces an elliptic isometry on $\CC^0(S)$, whereas the transformation $(x,y)\dashmapsto(x^2y, x(y+1))$ induces a loxodromic isometry on $\CC^0(S)$. 

The table above summarizes the situation and lists also the three dynamical invariants $\lambda(f), \mu(f),$ and $\nu^1(f)$.

\begin{table}[]
	\resizebox{\textwidth}{!}{%
		\begin{tabular}{@{}lllllll@{}}
			\toprule
			degree growth & $\lambda(f)$ & $\mu(f)$ & $\nu^1(f)$ & Isometry on $\h$ & Isometry on $\Cb(S)$ & Isometry on $\CC^0(S)$ \\ 	\toprule 
			bounded    & 1      & 0    & 0    & elliptic       & elliptic           & elliptic            \\ 	\midrule 
			linear    & 1      & $>0$   & 0    & parabolic       & loxodromic          & elliptic            \\ 	\midrule
			linear    & 1      & $>0$   & $>0$   & parabolic       & loxodromic          & loxodromic           \\ 	\midrule
			quadratic   & 1      & 0    & 0    & parabolic       & elliptic           & elliptic            \\ 	\midrule 
			exponential  & $>1$     & 0    & 0    & loxodromic      & elliptic           & elliptic            \\ 	\midrule 
			exponential  & $>1$     & $>0$   & 0    & loxodromic      & loxodromic          & elliptic            \\ 	\midrule 
			exponential  & $>1$     & $>0$   & $>0$   & loxodromic      & loxodromic          & loxodromic           \\ 	\midrule
		\end{tabular}%
	}
	\vspace{3mm}
	\caption{The type of the induced isometries of a birational transformation $f$ of a complete surface $S$.}
	\label{table}
\end{table}

\subsection{Variations on the cube complexes} 
In this paper we set our focus on varieties. However, it seems that similar cube complexes can be constructed in the analytic category such that results analogous to the results in this paper for varieties can be proved for compact complex manifolds or analytic spaces. 

There also exist some variations in the construction of our cube complexes. For instance, if $X$ is a regular variety, then we can construct the complexes $\CC^\l(X)$ in such a way that all the vertices are marked regular varieties, or marked normal varieties. 
\subsection{Groups of elliptic elements} Unfortunately, we were not able to answer the following intriguing question:

\begin{question}\label{ques:elliptic}
	Let $S$ be a regular projective surface and $\Gamma\subset\Bir(S)$ a finitely generated subgroup such that every element in $\Gamma$ is projectively regularizable. Does this imply that $\Gamma$ is projectively regularizable?
\end{question} 

With regards to the action of $\Bir(S)$ on $\Cb(S)$, Question~\ref{ques:elliptic} translates to the question whether a finitely generated subgroup $\Gamma\subset\Bir(S)$ containing only elliptic elements, has a fixed point. A finitely generated group acting on a finite dimensional $\CAT(0)$ cube complex by elliptic isometries always has a fixed point (\cite{Sageev-ends_of_groups}), whereas there exist examples of finitely generated subgroups acting on infinite dimensional $\CAT(0)$ cube complexes by elliptic isometries without having a fixed point (see for example \cite{Osajda_2018}).

One can show using results about the structure of automorphism groups of projective varieties, that a positive answer to Question~\ref{ques:elliptic} would imply that a finitely generated subgroup of $\Bir(S)$ such that all elements are \emph{algebraic}, i.e., their degree is bounded under iteration, is of bounded degree itself. This would answer a question by Serge Cantat and Charles Favre (\cite{Cantat_groupes_birat}, \cite{favrebourbaki}) positively.

The condition in the conjecture that $\Gamma$ is finitely generated is necessary. For instance, the group $G\coloneqq\{(x, y+f(y))\mid f\in\kk(y)\}$ consists of elements inducing elliptic isometries on $\Cb(\p_\kk^2)$, but one can show that $G$ is not projectively regularizable. 

The following observation treats a very specific case of Question~\ref{ques:elliptic}:

\begin{proposition}
	Let $S$ be a regular projective surface and let $\Gamma\subset\Bir(S)$ be a subgroup generated by two elements of order $2$ such that the composition of the two generators is projectively regularizable. Then $\Gamma$ is projectively regularizable.
\end{proposition}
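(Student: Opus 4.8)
The plan is to translate projective regularizability into a fixed-vertex statement for the action on the blow-up complex $\Cb(S)$ and then to exhibit a finite orbit. First I would pass to a completion and a resolution of singularities, so that $S$ may be assumed projective and regular and $\Bir(S)$ acts on $\Cb(S)$ preserving the orientation. The guiding observation is that, directly from the definition of the vertices and of the action, a subgroup $H\subset\Bir(S)$ is projectively regularizable if and only if it fixes a vertex of $\Cb(S)$: a fixed vertex $[(T,\varphi)]$ satisfies $\varphi^{-1}H\varphi\subset\aut(T)$ with $T$ projective and regular, and conversely. Writing $a,b$ for the two generating involutions and $g\coloneqq ab$, the hypothesis that $\langle g\rangle$ is projectively regularizable thus provides a vertex $v_0\in\Cb(S)$ fixed by $g$.

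Next I would extract the infinite-dihedral structure of $\Gamma$. Using $a^2=b^2=1$ one gets $aga^{-1}=aga=ba=(ab)^{-1}=g^{-1}$, so $a$ normalizes $\langle g\rangle$; hence $\langle g\rangle$ is normal of index at most two in $\Gamma=\langle a,g\rangle$, and every element of $\Gamma$ lies in $\langle g\rangle$ or in $a\langle g\rangle$. This is the only step that uses that the two generators have order two.

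The heart of the matter is then a one-line orbit computation, provided one takes the basepoint to be the $g$-fixed vertex $v_0$ rather than $(S,\id)$. For $w\in\langle g\rangle$ we have $w\cdot v_0=v_0$, and for $w=ag^k\in a\langle g\rangle$ we have $w\cdot v_0=a\cdot(g^k\cdot v_0)=a\cdot v_0$; hence the whole orbit satisfies $\Gamma\cdot v_0\subseteq\{v_0,\,a\cdot v_0\}$ and is in particular bounded. Since $\Gamma\subset\Bir(S)$ preserves the orientation of $\Cb(S)$, Proposition~\ref{fixedpoint} then yields a vertex $[(T,\varphi)]$ fixed by all of $\Gamma$, so that $\varphi^{-1}\Gamma\varphi\subset\aut(T)$ for a projective regular surface $T$; that is, $\Gamma$ is projectively regularizable.

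I do not expect a genuine obstacle here: once projective regularizability is identified with ellipticity on $\Cb(S)$, the dihedral shape of $\Gamma$ together with the choice of basepoint inside the fixed-vertex set of $g$ makes the orbit bound immediate, and the only remaining verification is the orientation hypothesis of Proposition~\ref{fixedpoint}, which holds for the $\Bir(S)$-action. In fact the argument only uses that the projectively regularizable subgroup $\langle g\rangle$ has finite index in $\Gamma$, so it is the index-two instance of the general principle that a subgroup of $\Bir(S)$ admitting a projectively regularizable finite-index subgroup is itself projectively regularizable.
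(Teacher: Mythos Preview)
Your proof is correct and follows essentially the same strategy as the paper: choose a vertex of $\Cb(S)$ fixed by the product of the two involutions, use the dihedral structure of $\Gamma$ to conclude that the $\Gamma$-orbit of this vertex is bounded (you even pin it down to at most two points), and then invoke Proposition~\ref{fixedpoint}. The only superfluous step is the initial completion and resolution, since $S$ is already assumed projective and regular; your closing remark that this is the index-two case of the finite-index regularizability principle is exactly right and matches another proposition in the paper.
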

\begin{proof}
	Let $f,g$ be generators of $\Gamma$ of order two. Assume that $fg$ is projectively regularizable.
	By assumption, any element of $\Gamma$ can be written, for some $i\geq 0$ and $j\in\{0,1\}$, either as $g^j(fg)^i$ or $(fg)^if^j$. Consider the action of $\Gamma$ on $\Cb(S)$ and let $(T,\varphi)$ be a vertex that is fixed by $fg$. Then the orbit of the vertex $(T,\varphi)$ by $\Gamma$ is bounded by $\max\left(\dist\left((T,\varphi),(T,g\varphi)\right),\dist\left((T,\varphi),(T,f\varphi)\right)\right)$. Hence, the action of the group $\Gamma$ has a fixed point by Proposition \ref{fixedpoint} and so it is projectively regularizable.
\end{proof}

\subsection{Geometry of $\CC^1(X)$}

In Remark~\ref{rmq_plusieurs_cubes} we observed that the cube complexes $\CC^1(X)$ have a more interesting geometry than the complexes $\CC^0(X)$, where every finite set of vertices is contained in a single cube. It could be interesting to better understand the geometry of $\CC^1(X)$ in order to obtain new tools to study groups of pseudo-automorphisms. One can also dream about using $\CC^1(X)$ to obtain insights into sequences of flips and flops.

\bibliographystyle{amsalpha}
\bibliography{bibliography_cu}
\end{document}